\newtheorem{thm}{Theorem}[section]
\newtheorem{cor}[thm]{Corollary}
\newtheorem{lem}[thm]{Lemma}
\newtheorem{prop}[thm]{Proposition}
\newcommand{\1}{\partial}
\newcommand{\3}{\varepsilon}
\newcommand{\Z}{{\mathbb Z}}
\newcommand{\cD}{{\mathcal D}}
\newcommand{\cX}{{\mathcal X}}
\def\ni{\noindent}
\begin{document}
\title{Existence and uniqueness  of the singular self-similar solutions of the fast diffusion equation and logarithmic diffusion equation}
\author{Kin Ming Hui\\
Institute of Mathematics, Academia Sinica\\
Taipei, Taiwan, R. O. C.\\
e-mail: kmhui@gate.sinica.edu.tw}
\date{Dec 31, 2024}
\smallbreak \maketitle
\begin{abstract}
Let $n\ge 3$, $0<m<\frac{n-2}{n}$, $\rho_1>0$,  $\eta>0$, $\beta>\frac{m\rho_1}{n-2-nm}$, 
$\alpha=\alpha_m=\frac{2\beta+\rho_1}{1-m}$, $\beta_0>0$ and $\alpha_0=2\beta_0+1$. We use fixed point argument to give a new proof for the existence and uniqueness of radially symmetric singular solution $f=f^{(m)}$ of the  elliptic equation $\Delta (f^m/m)+\alpha f+\beta x\cdot\nabla f=0$, $f>0$, in $\mathbb{R}^n\setminus\{0\}$, satisfying $\displaystyle\lim_{|x|\to 0}|x|^{\alpha/\beta}f(x)=\eta$.
We also prove the existence and uniqueness of  radially symmetric singular solution $g$ of the  equation $\Delta\log g+\alpha_0 g+\beta_0x\cdot\nabla g=0$, $g>0$, in $\mathbb{R}^n\setminus\{0\}$, satisfying $\displaystyle\lim_{|x|\to 0}|x|^{\alpha_0/\beta_0}g(x)=\eta$.  
Such equations arises from the study of backward singular self-similar solution of the fast diffusion equation $u_t=\Delta u^m$ and the logarithmic diffusion equation $u_t=\Delta\log u$ respectively. 
We will also prove the asymptotic decay rate of the function $f$ as $|x|\to\infty$.

\end{abstract}

\vskip 0.2truein

Keywords: existence, uniqueness, singular self-similar solution, fast diffusion equation, logarithmic diffusion equation

AMS 2020 Mathematics Subject Classification: Primary 35J75, 35K65 Secondary 35J70

\vskip 0.2truein
\setcounter{equation}{0}
\setcounter{section}{0}

\section{Introduction}
\setcounter{equation}{0}
\setcounter{thm}{0}

Recently there is a lot of study on the equation
\begin{equation}\label{fde}
u_t=\Delta (u^m/m)
\end{equation}
for the range $0<m<\frac{n-2}{n}$ by P.~Daskalopoulos, M.~Fila, S.Y.~Hsu, K.M.~Hui, T.~Jin, S.~Kim, P.~Mackov\'a, M.~del Pino, J.~King, N.~Sesum, Y.~Sire, J.~Takahashi, J.L.~Vazquez, J.~Wei, M.~Winkler, J.~Xiong, H.~Yamamoto,  E.~Yanagida and Y.~Zheng, \cite{DKS}, \cite{DPS}, \cite{DS1}, \cite{DS2}, \cite{FMTY}, \cite{FVWY}, \cite{FW}, \cite{Hs2}, \cite{Hs3}, \cite{Hs4}, \cite{Hu1}, \cite{Hu2}, \cite{Hu3}, \cite{Hu4}, \cite{HuK2}, \cite{HuK3}, \cite{JX}, \cite{SWZ}, \cite{TY}, etc. Equation \eqref{fde} appears in many physical models and in geometric flow problem. 
When $m>1$, \eqref{fde} is the porous medium equation which models the flow of gases or liquid through porous media \cite{A}. When $0<m<1$, \eqref{fde} is the fast diffusion equation \cite{V}. When $m=\frac{n-2}{n+2}$, $n\ge 3$, and $g=u^{\frac{4}{n+2}}dx^2$ is a metric on $\mathbb{R}^n$ which evolves by the Yamabe flow,
\begin{equation*}
\frac{\1 g}{\1 t}=-Rg
\end{equation*}
where $R$ is the scalar curvature of the metric $g$, then $u$ satisfies (\cite{DKS}, \cite{PS}),
\begin{equation*}
u_t=\frac{n-1}{m}\Delta u^m
\end{equation*}
which after rescaling is equivalent to \eqref{fde}. Letting $m\to 0^+$ in \eqref{fde}, we formally get
 the logarithmic diffusion equation (\cite{Hu2}, \cite{HuK1}),
\begin{equation}\label{log-diffusion-eqn}
u_t=\Delta\log u.
\end{equation}
This equation arises  in the Carleman model of two types of particles moving with velocities $\pm 1$ along the $x$-axis ($n=1$)  and in the study of Ricci flow on surface ($n=2$) etc. We refer the readers to the book \cite{DK} by P.~Daskalopoulos and C.E.~Kenig and the book [V] by J.L.~Vazquez for some of the recent results on the equation \eqref{fde} and \eqref{log-diffusion-eqn}.

It is known that under some appropriate conditions on the initial data the solutions of a partial differential equation will behave like the self-similar solution of the equation. Hence in order to study the behaviour of the singular solutions of \eqref{fde} and \eqref{log-diffusion-eqn} in $\mathbb{R}^n\setminus\{0\}$ which blow up at the origin it is important to study the self-similar singular solutions of \eqref{fde} and \eqref{log-diffusion-eqn} in $\mathbb{R}^n\setminus\{0\}$ which blow up at the origin. Now the backward self-similar solution of \eqref{fde} can be written as 
\begin{equation}\label{sss}
V(x,t)=(T-t)^{\alpha}f((T-t)^{\beta}x),\quad x\in\mathbb{R}^n\setminus\{0\}, t<T
\end{equation}
for some constant $\alpha,\beta$. Note that for any $0<m<1$, $V$ is a solution of \eqref{fde} in $\mathbb{R}^n\setminus\{0\}\times (-\infty,T)$ if  $f$ satisfies
\begin{equation}\label{elliptic-eqn}
\Delta (f^m/m)+\alpha f+\beta x\cdot\nabla f=0,\quad f>0, \quad\mbox{ in }\mathbb{R}^n\setminus\{0\}
\end{equation}
and
\begin{equation}\label{alpha-beta-relation}
\alpha=\alpha_m=\frac{2\beta+1}{1-m}.
\end{equation}
For any $n\ge 3$, $0<m<\frac{n-2}{n}$, $\eta>0$, $\rho_1>0$, $\beta>\frac{m\rho_1}{n-2-nm}$ and
\begin{equation}\label{alpha-beta-relation2}
\alpha=\frac{2\beta+\rho_1}{1-m}, 
\end{equation}
existence of radially symmetric solution $f$ of \eqref{elliptic-eqn} which satisfies
\begin{equation}\label{blow-up-rate-x=0}
\lim_{r\to 0}r^{\alpha/\beta}f(r)=\eta
\end{equation}
was proved by K.M.~Hui \cite{Hu3} using shooting method. Note that the shooting method is very hard for this problem since one need to find appropriate initial values so that the shooting method works. Uniqueness of such radially symmetric solution $f$ for the case $\beta\ge\frac{\rho_1}{n-2-nm}$ is also proved by K.M.~Hui \cite{Hu3}. The uniqueness result was extended to the case $\beta\ge\frac{m\rho_1}{n-2-nm}$  by K.M.~Hui and S.~Kim \cite{HuK3} by finding the asymptotic expansion of the solution $f$ near the origin and using integral comparison method.

Since the solution $f$ of \eqref{elliptic-eqn} which satisfies \eqref{blow-up-rate-x=0} blows up at the 
origin, one cannot apply fixed point argument directly to $f$ to prove the existence and uniqueness result. One need to transform the function $f$ into another function $z$ by \eqref{w-defn}
and \eqref{z-defn}. Then we  apply the
fixed point argument on a system of integral equations related to this function $z$ to give a new proof of the existence and uniqueness result for the case
$\beta>\frac{m\rho_1}{n-2-nm}$ with $\alpha$ being given by \eqref{alpha-beta-relation2}. 
Hence we give simple new proof of the results of \cite{Hu3} and \cite{HuK3}. 

This result says that for any $T>0$, $\eta>0$, $\alpha$ and $\beta>1/2$ satisfying \eqref{alpha-beta-relation} with $m=\frac{n-2}{n+2}$ there exists a unique radially symmetric Yamabe flow 
\begin{equation*}
g(t)=\left[(T-t)^{\alpha}f((T-t)^{\beta}x)\right]^{\frac{4}{n+2}}\,dx^2\quad\mbox{ on }(\mathbb{R}^n\setminus\{0\})\times (0,T)
\end{equation*} 
 where $dx^2$ is the Euclidean metrics on $\mathbb{R}^n$ and $f$ is the unique radially symmetric solution of 
\eqref{elliptic-eqn} which satisfies \eqref{blow-up-rate-x=0}.
The metric $g(t)$ vanishes at time $T$. Moreover for any compact subset $K\subset \mathbb{R}^n\setminus\{0\}$,  $g(t)\approx \eta^{-\frac{4}{n+2}} |x|^{-\frac{4\alpha}{(n+2)\beta}}\,dx^2$ on $K$ as $t\to T$.

Although the existence result of this paper is also mentioned in \cite{V}, there is no vigorous proof there and there is only a heuristic argument using the phase plane approach in \cite{V}.
Note that the existence result of \cite{Hu3} is proved by using shooting method with selection  of
the appropriate initial data for the approximate solutions. This method may not work in other problems since it is hard to select the appropriate initial data for similar proof to work.

On the other hand the uniqueness of radially symmetric solutions of \eqref{elliptic-eqn} which satisfies \eqref{blow-up-rate-x=0} is proved in \cite{HuK3} using a careful analysis of the asymptotic expansion of the solution 
$f$ near the origin. This method also may not work in other problems. Hence our approach in this paper
is more general and is applicable to a large number of problems.

Note that our method does not apply to the case $\beta=\frac{m\rho_1}{n-2-nm}$. The reason is that in this case the constants $C_2=0$ and $C_3=mC_1^2$ in \eqref{c123-defn}. Hence the constant $\3_1=0$ in \eqref{epsilon1-defn} so that the proof of Lemma \ref{z-local-existence-lem} does not work.
Our method also does not apply to the case $m<0$ since in this case the constant $\3_1$ in \eqref{epsilon1-defn} does not exists so that the proof of Lemma \ref{z-local-existence-lem} does not work.

The method developed here is applicable to the proof of existence and uniqueness of singular solutions for a large number of nonlinear equations. For example this method is used by K.M.~Hui and Jongmyeong Kim \cite{HuKj} to prove the existence and uniqueness of the forward singular self-similar solution of \eqref{fde} in $(\mathbb{R}^n\setminus\{0\})\times (0,\infty)$. This method unifies the proof of the existence and uniqueness of both the forward and backward singular self-similar solutions of \eqref{fde} in $(\mathbb{R}^n\setminus\{0\})\times (0,\infty)$ and $(\mathbb{R}^n\setminus\{0\})\times (0,T)$ for some constant $T>0$ respectively.
On the other hand in the paper \cite{Hu5} K.M.~Hui used this method to prove the existence of singular rotationally symmetric gradient Ricci solitons in 
higher dimensions which blows up at the origin.

The author suspects that $|x|^{-\alpha/\beta}$ is the only possible blow-up rate for this type of singular backward self-similar solution of \eqref{fde}. We would like to conjecture that there are no singular backward self-similar solution of \eqref{fde} with other blow-up rate.

On the other hand the backward self-similar solution of \eqref{log-diffusion-eqn} can be written as 
\begin{equation}\label{log-sss}
V_0(x,t)=(T-t)^{\alpha_0}g((T-t)^{\beta_0}x),\quad x\in\mathbb{R}^n\setminus\{0\}, t<T
\end{equation}
for some constants $\beta_0>0$ and $\alpha_0$. Note that  $V_0$ is a solution of \eqref{log-diffusion-eqn} in $\mathbb{R}^n\setminus\{0\}\times (-\infty,T)$ if  $g$ satisfies
\begin{equation}\label{log-elliptic-eqn}
\Delta \log g+\alpha_0 g+\beta_0 x\cdot\nabla g=0,\quad g>0, \quad\mbox{ in }\mathbb{R}^n\setminus\{0\}
\end{equation}
and
\begin{equation*}
\alpha_0=2\beta_0+1.
\end{equation*}
For any $n\ge 3$, $\eta>0$, $\beta_0>0$, $\rho_1>0$ and 
\begin{equation}\label{alpha0-beta0-relation}
\alpha_0=2\beta_0+\rho_1,
\end{equation}
K.M.~Hui and S.~Kim \cite{HuK3} proved
the existence of radially symmetric solutions $g$ of \eqref{log-elliptic-eqn} which satisfies 
\begin{equation}\label{g-blow-up-rate-x=0}
\lim_{r\to 0}r^{\alpha_0/\beta_0}g(r)=\eta
\end{equation}
by showing that the unique radially symmetric solution $f=f^{(m)}$ of \eqref{elliptic-eqn} with $\alpha=\alpha_m$ given by \eqref{alpha-beta-relation2} that satisfies \eqref{blow-up-rate-x=0}  converges on every compact subset of $\mathbb{R}^n\setminus\{0\}$ to the unique singular radially symmetric solution $g$ of \eqref{log-elliptic-eqn} which satisfies \eqref{g-blow-up-rate-x=0}  as $m\to 0^+$. In this paper we will use fixed point argument to give a new proof of the existence and uniqueness of such radially symmetric solution solution $g$ of \eqref{log-elliptic-eqn} with $\alpha_0$ given by \eqref{alpha0-beta0-relation} which satisfies \eqref{g-blow-up-rate-x=0}.

It is stated without proof in \cite{DKS} and \cite{Hu3} that the radially symmetric solution $f=f^{(m)}$ of \eqref{elliptic-eqn}, \eqref{blow-up-rate-x=0}, with $\alpha=\alpha_m$ given by \eqref{alpha-beta-relation2} satisfies
\begin{equation}\label{f-limit-infty}
\lim_{|x|\to\infty}|x|^2f(x)^{1-m}=\frac{2(n-2-nm)}{(1-m)\rho_1}.
\end{equation}
In this paper we will use a modification of the proof of \cite{Hs4} to prove this result.
More precisely we will prove the following main results.

\begin{thm}\label{blow-up-self-similar-soln-thm}
Let $n\ge 3$, $0<m<\frac{n-2}{n}$, $\rho_1>0$, $\eta>0$, $\beta>\frac{m\rho_1}{n-2-nm}$ and $\alpha$ satisfies \eqref{alpha-beta-relation2} . 
Then there exists a  unique   solution $f=f^{(m)}$ of 
\begin{equation}\label{f-ode}
(f^m/m)_{rr}+\frac{n-1}{r}(f^m/m)_r+\alpha f+\beta rf_r=0,\quad f>0, \quad\mbox{ in } (0,\infty)
\end{equation} 
in $C^2(0,\infty)$ that satisfies \eqref{blow-up-rate-x=0}. 
\end{thm} 

\begin{thm}\label{log-blow-up-self-similar-soln-thm}
Let $n\ge 3$, $\eta>0$, $\rho_1>0$, $\beta_0>0$  and $\alpha_0$ be given by \eqref{alpha0-beta0-relation}. Then there exists a unique solution $g$ of 
\begin{equation}\label{g-ode}
\left(\frac{g_r}{g}\right)_r+\frac{n-1}{r}\cdot\frac{g_r}{g}+\alpha_0 g+\beta_0 rg_r=0,\quad g>0, \quad\mbox{ in } (0,\infty)
\end{equation} 
in $C^2(0,\infty)$ that satisfies \eqref{g-blow-up-rate-x=0}.
\end{thm} 
 
\begin{thm}\label{soln-at-x-infty-thm}
Let $n\ge 3$, $0<m<\frac{n-2}{n}$, $\rho_1>0$, $\beta>\frac{m\rho_1}{n-2-nm}$ and $\alpha=\alpha_m$ be given by \eqref{alpha-beta-relation2}. Let $f\in C^2(0,\infty)$ be the 
unique  radially symmetric solution  of \eqref{elliptic-eqn} that satisfies \eqref{blow-up-rate-x=0}  given by Theorem \ref{blow-up-self-similar-soln-thm}. Then $f$ satisfies \eqref{f-limit-infty}.
\end{thm}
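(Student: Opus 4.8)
The plan is to reduce \eqref{f-limit-infty} to a convergence statement for the autonomous planar system attached to the radial ODE \eqref{f-ode}. Put $s=\log r$ (so $r=e^{s}$) and
\[
\psi(s)=r^{2}f(r)^{1-m},\qquad M(s)=\frac{rf_{r}(r)}{f(r)},
\]
so that \eqref{f-limit-infty} asserts precisely that $\psi(s)\to L$ as $s\to+\infty$, where $L:=\dfrac{2(n-2-nm)}{(1-m)\rho_{1}}$. Using \eqref{alpha-beta-relation2} one rewrites \eqref{f-ode} as the system
\[
\psi'=\psi\bigl(2+(1-m)M\bigr),\qquad M'=-mM^{2}-(n-2+\beta\psi)M-\alpha\psi ,
\]
where the prime denotes differentiation in $s$. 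Its only equilibrium with $\psi>0$ is $\bigl(M,\psi\bigr)=\bigl(-\tfrac{2}{1-m},L\bigr)$, which corresponds to the explicit separable solution $f_{*}(r)=L^{1/(1-m)}r^{-2/(1-m)}$ of \eqref{elliptic-eqn}; the other equilibria $(0,0)$ and $\bigl(-\tfrac{n-2}{m},0\bigr)$ lie on $\{\psi=0\}$, the latter being the state of the faster ``harmonic-type'' decay $f(r)\sim c\,r^{-(n-2)/m}$.

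I would first record four preliminary facts. (i) $f$ has no interior local minimum, since at such a point \eqref{f-ode} gives $(f^{m}/m)_{rr}=-\alpha f<0$ while $(f^{m}/m)_{rr}=f^{m-1}f_{rr}\ge 0$; together with $\lim_{r\to0}f=\infty$ this forces $f$ to be strictly decreasing on $(0,\infty)$, so $M<0$. (ii) By uniqueness the solution of Theorem \ref{blow-up-self-similar-soln-thm2} is the one of Theorem \ref{blow-up-self-similar-soln-thm}, hence satisfies \eqref{z-growth-rate-bd}; since $z=\tfrac{\alpha}{\beta}+M$ by \eqref{w-defn}--\eqref{z-defn}, $z>0$ gives $-\tfrac{\alpha}{\beta}<M<0$, so $M$ is bounded. (iii) Differentiating \eqref{f-ode} and evaluating at a critical point $s_{0}$ of $\psi$ yields
\[
\psi''(s_{0})=\rho_{1}\,\psi(s_{0})\bigl(L-\psi(s_{0})\bigr),
\]
so every local maximum value of $\psi$ is $\ge L$ and every local minimum value is $\le L$; in particular, if $\psi$ stays on one side of $L$ then it is eventually monotone. (iv) The linearization of the system at $\bigl(-\tfrac{2}{1-m},L\bigr)$ has determinant $\rho_{1}L>0$ and trace $\tfrac{4m}{1-m}-(n-2)-\beta L$, and since $\beta>\tfrac{m\rho_{1}}{n-2-nm}$ forces $\beta L>\tfrac{2m}{1-m}$, this trace is $<-\tfrac{n-2-nm}{1-m}<0$; hence the equilibrium is asymptotically stable.

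The crucial step is to exclude the decay $\psi\to0$. Observe that $\beta>\tfrac{m\rho_{1}}{n-2-nm}$ is equivalent to $m\alpha<\beta(n-2)$, i.e.\ to $\tfrac{\alpha}{\beta}<\tfrac{n-2}{m}$, so that both $0$ and $-\tfrac{n-2}{m}$ lie outside the interval $\bigl(-\tfrac{\alpha}{\beta},-\tfrac{2}{1-m}\bigr]$. Suppose $\psi(s)\to0$. By (iii), $\psi$ is eventually monotone decreasing, so $\psi'<0$ and, by (i)--(ii), $M\in\bigl(-\tfrac{\alpha}{\beta},-\tfrac{2}{1-m}\bigr)$ eventually. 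Write the $M$-equation as $M'=-M(mM+n-2)-\psi(\beta M+\alpha)$: for $\psi$ small the first term dominates, and it is bounded below by a positive constant on $\bigl[-\tfrac{\alpha}{\beta},-\tfrac{2}{1-m}\bigr]$ (because $mM+n-2>0$ there, again by $m\alpha<\beta(n-2)$); hence $M$ increases at a uniformly positive rate and must leave the interval through $M=-\tfrac{2}{1-m}$, where $M'=\tfrac{2(n-2-nm)}{(1-m)^{2}}-\tfrac{\rho_{1}}{1-m}\psi>0$ for $\psi$ small, so $M$ crosses above $-\tfrac{2}{1-m}$ and $\psi$ turns to increase — a contradiction. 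Thus $\psi\not\to0$. One also excludes $\limsup_{s\to\infty}\psi=\infty$ (which in particular prevents $f$ from tending to a positive constant): for $M\ge-\tfrac{2}{1-m}$ one has $\beta M+\alpha\ge\tfrac{\rho_{1}}{1-m}>0$, so $M'\le C-\tfrac{\rho_{1}}{1-m}\psi$ drives $M$ below $-\tfrac{2}{1-m}$ — after which $\psi$ decreases — as soon as $\psi$ is large. Hence $\limsup_{s\to\infty}\psi<\infty$.

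Since $M$ is bounded and $\psi$ is bounded above, the $\omega$-limit set $\Omega$ of the trajectory is nonempty, compact, connected and invariant. Using $M>-\tfrac{\alpha}{\beta}$ along the trajectory, $\Omega$ cannot contain $\bigl(-\tfrac{n-2}{m},0\bigr)$; and if $\Omega$ met $\{\psi=0\}$ it would contain $(0,0)$ and hence (inclination lemma) its unstable manifold, whose forward orbit — absent periodic orbits — $\omega$-converges to the sink $\bigl(-\tfrac{2}{1-m},L\bigr)$, forcing $\Omega$ to be that single point, a contradiction; so $\Omega\subset\{\psi>0\}$, and then by Poincar\'e--Bendixson and the absence of periodic orbits $\Omega=\bigl\{\bigl(-\tfrac{2}{1-m},L\bigr)\bigr\}$. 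Therefore $\psi(s)\to L$, which is \eqref{f-limit-infty}. I expect the main obstacle to be precisely the exclusion of the faster decay $\psi\to0$ (equivalently $f(r)\sim c\,r^{-(n-2)/m}$): unlike the growth $\psi\to\infty$, it is an admissible asymptotic behavior for positive solutions of \eqref{elliptic-eqn} on $\mathbb{R}^{n}\setminus\{0\}$ and cannot be eliminated from the equation at infinity alone — it must be ruled out by invoking that our $f$ carries the prescribed singular profile \eqref{blow-up-rate-x=0} at the origin, through $z>0$ in \eqref{z-growth-rate-bd} and the standing hypothesis $\beta>\tfrac{m\rho_{1}}{n-2-nm}$. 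The secondary technical point, where I would adapt the argument of \cite{Hs4}, is the verification that the reduced system admits no periodic orbit in the region carrying the trajectory.
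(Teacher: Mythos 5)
Your reduction to the planar system for $(M,\psi)=\bigl(rf_r/f,\,r^2f^{1-m}\bigr)$ is correct, and so are the computations of the equilibrium $\bigl(-\tfrac{2}{1-m},L\bigr)$, of the relation $z=\tfrac{\alpha}{\beta}+M$ giving $-\tfrac{\alpha}{\beta}<M<0$, of the identity $\psi''(s_0)=\rho_1\psi(s_0)(L-\psi(s_0))$ at critical points, and of the linearization being a sink. This is a genuinely different route from the paper, which does not set up a dynamical system at all: the paper proves two-sided bounds $C_6\le v(r)\le C_7$ (Lemmas \ref{vr-positive-lem}--\ref{v-upper-bd-lem}) and then, for any sequence $r_i\to\infty$, plugs into the integral representation \eqref{f-intregal-eqn5} derived from \eqref{f-intregal-eqn} and applies an l'Hospital--type limit (the argument of Lemma 2.1 of \cite{Hs4}) to force any sequential limit of $v$ to equal $L$. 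That computation sidesteps Poincar\'e--Bendixson, $\omega$-limit sets, and the no-periodic-orbit question entirely; in exchange it has to split off the case $\beta>\beta_1$ (Proposition \ref{beta>beta1-limit-case-prop}) and carry integral estimates. Your phase-plane approach is cleaner conceptually and treats all $\beta>\tfrac{m\rho_1}{n-2-nm}$ at once.

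However, as written there are two real gaps, both of which you partly flagged. First, you prove $\psi\not\to 0$, but the Poincar\'e--Bendixson argument actually needs $\liminf_{s\to\infty}\psi>0$ (equivalently $\Omega\cap\{\psi=0\}=\emptyset$), and ruling out a decaying subsequence of local minima is not the same as ruling out a monotone decay: your monotonicity step (iii) does not immediately give eventual monotonicity when $\psi$ oscillates. The paper handles this with Lemma \ref{vr-positive-lem} ($v_r>0$ on $\{v\le 1/M_1\}$ near infinity) followed by Lemma \ref{v-lower-bd-lem}, and you would need an analogous quantitative trapping argument showing a trajectory that dips below some level $\epsilon_0>0$ cannot recur with smaller minima. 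Second, the absence of periodic orbits (and of a homoclinic loop through $(0,0)$) is asserted, not proved, and your appeal to the inclination lemma for the unstable manifold of $(0,0)$ is itself conditional on it. The periodic-orbit part is in fact fillable: with the Dulac multiplier $B(M,\psi)=\dfrac{1}{\psi\,(-M)(mM+n-2)}$ one computes
\begin{equation*}
\operatorname{div}(B\,F)=-\,\frac{m\beta M^2+2m\alpha M+\alpha(n-2)}{M^2\,(mM+n-2)^2}\,,
\end{equation*}
and the numerator has negative discriminant $4m\alpha\bigl(m\alpha-\beta(n-2)\bigr)<0$ precisely because $\beta>\tfrac{m\rho_1}{n-2-nm}$, so $\operatorname{div}(BF)<0$ on $\{-\tfrac{n-2}{m}<M<0,\ \psi>0\}$ and Bendixson--Dulac excludes closed orbits there. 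But $B$ is singular on $\{\psi=0\}$, so this does \emph{not} exclude a graphic through $(0,0)$ — which loops back to the first gap. Until $\Omega\subset\{\psi\ge\epsilon_0\}$ is established by an ODE estimate in the spirit of Lemma \ref{vr-positive-lem}, the Poincar\'e--Bendixson conclusion is not available.
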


The plan of the paper is as follows. In section 2 we will prove Theorem \ref{blow-up-self-similar-soln-thm}. In section 3 we will prove Theorem \ref{log-blow-up-self-similar-soln-thm}. In section 4 we will prove Theorem \ref{soln-at-x-infty-thm}.

\section{Existence and uniqueness of the singular self-similar solution of the fast diffusion equation}
\setcounter{equation}{0}
\setcounter{thm}{0}

In this section we assume that $n\ge 3$, $0<m<\frac{n-2}{n}$, $\eta>0$, $\rho_1>0$, $\beta>0$ and $\alpha$  satisfies \eqref{alpha-beta-relation2}. We will  prove the existence and uniqueness of radially symmetric solution $f$ of \eqref{elliptic-eqn} which satisfies \eqref{blow-up-rate-x=0}. 
Suppose $f\in C^2(0,\infty)$ is a solution of \eqref{f-ode} which satisfies \eqref{blow-up-rate-x=0}. Let $w$  be given by 
\begin{equation}\label{w-defn}
w(r)=r^{\alpha/\beta}f(r)\quad\forall r=e^s,s\in\mathbb{R}
\end{equation}
and $z$ be given by
\begin{equation}\label{z-defn}
s=\log r,\quad z(s)=rw_r(r)/w(r)=\frac{\frac{\1}{\1 s}w(e^s)}{w(e^s)}.
\end{equation}
Then $w\in C^2(0,\infty)$ and $z\in C^1(\mathbb{R})$.
Note that the transforms $w$ and $z$ are not in the book \cite{V}. The transform $w$ first appears in \cite{Hs2} in the study of the asymptotic behaviour of the self-similar solution of \eqref{fde} in $\mathbb{R}^n$ as $|x|\to\infty$. On the other hand the transform $z$ first appear in \cite{Hu3}. 
Note that by direct computation  (cf. \cite{Hs2}) we have
\begin{equation}\label{w1-r-eqn}
\left(\frac{w_r}{w}\right)_r+\frac{n-1-\frac{2m\alpha}{\beta}}{r}\cdot\frac{w_r}{w}
+m\left(\frac{w_r}{w}\right)^2
+\frac{\beta r^{-1-\frac{\rho_1}{\beta}}w_r}{w^m}
=\frac{\alpha}{\beta}\cdot\frac{n-2-\frac{m\alpha}{\beta}}{r^2}\quad\forall r>0
\end{equation}
and $z$ satisfies
\begin{align}
&z_s+\left(n-2-\frac{2m\alpha}{\beta}\right)z+mz^2+\beta e^{-\frac{\rho_1}{\beta}s}\widetilde{w}^{1-m}z=\frac{\alpha}{\beta}\left(n-2-\frac{m\alpha}{\beta}\right)\label{z-eqn}\\
\Leftrightarrow\quad&z_s+m(z+C_1)^2+\beta e^{-\frac{\rho_1}{\beta}s}\widetilde{w}^{1-m}z=C_3\label{z-eqn2}
\end{align} 
in $\mathbb{R}$ where 
\begin{equation}\label{tilde-w-defn}
\widetilde{w}(s)=w(e^s)\quad\forall s\in\mathbb{R}
\end{equation}
and
\begin{equation}\label{c123-defn}
C_1=\frac{1}{2m}\left(n-2-\frac{2m\alpha}{\beta}\right),\quad C_2=\frac{\alpha}{\beta}\left(n-2-\frac{m\alpha}{\beta}\right)
\quad\mbox{ and }\quad C_3=C_2+mC_1^2.
\end{equation}
Note that when $\beta>\frac{m\rho_1}{n-2-nm}$, then $C_2>0$ and hence $C_3>0$. For any $b_1>0$ and function $w_1\in L_{loc}^{\infty}(\mathbb{R})$, let 
\begin{equation}\label{a-defn}
a(w_1,s)=-\beta\int_s^{-b_1}e^{-\frac{\rho_1}{\beta}\rho'}w_1^{1-m}(\rho')\,d\rho'\quad\forall s\in\mathbb{R}.
\end{equation}
If we assume that 
\begin{equation*}
z(s_1)e^{a(\widetilde{w},s_1)}\to 0\quad\mbox{ as }s_1\to -\infty
\end{equation*}
and
\begin{equation*}
\int_{-\infty}^se^{a(\widetilde{w},\rho)}\,d\rho+\int_{-\infty}^se^{a(\widetilde{w},\rho)}(z(\rho)+C_1))^2\,d\rho<\infty\quad\forall s\in\mathbb{R},
\end{equation*}
 then by multiplying  \eqref{z-eqn2} by $e^{a(\widetilde{w},s)}$ and integrating over $(-\infty,s)$ we get
\begin{equation}\label{z-representation-formula}
z(s)=C_3\int_{-\infty}^se^{a(\widetilde{w},\rho)-a(\widetilde{w},s)}\,d\rho -m\int_{-\infty}^se^{a(\widetilde{w},\rho)-a(\widetilde{w},s)}(z(\rho)+C_1))^2\,d\rho
\end{equation}
in $\mathbb{R}$.
This suggests one to use fixed point argument to prove the existence of radially symmetric solution $f$ of \eqref{elliptic-eqn} which satisfies \eqref{blow-up-rate-x=0}. 

We will prove Theorem \ref{blow-up-self-similar-soln-thm} in several steps. We will first use fixed point argument to prove the local existence and uniqueness of  solution $z$ of \eqref{z-eqn} 
with
\begin{equation}\label{tilde-w-formula} 
\widetilde{w}(s)=\eta\, \mbox{exp}\left(\int_{-\infty}^sz(\rho)\,d\rho\right)
\end{equation}
which satisfies some decay condition for $z$. We will then prove that this local solution can be extended to a unique global solution $z$ of \eqref{z-eqn} with $\widetilde{w}$ given by \eqref{tilde-w-formula} under the condition that $z$ satisfies some decay condition. We then remove the decay requirement on $z$ in the uniqueness of solution result. From these results on $z$ we get corresponding result for $f$ in Theorem \ref{blow-up-self-similar-soln-thm}.

\begin{lem}\label{z-local-existence-lem}
Let $n\ge 3$, $0<m<\frac{n-2}{n}$, $\rho_1>0$, $\eta>0$, $\beta>\frac{m\rho_1}{n-2-nm}$ and $\alpha$ satisfies \eqref{alpha-beta-relation2}. Then there exists a constant $b_1>0$ such that the equation \eqref{z-eqn}
has a unique  solution $z\in C^1(-\infty,-b_1)$   in $(-\infty,-b_1)$ 
with $\widetilde{w}$ given by \eqref{tilde-w-formula}
in $(-\infty, -b_1)$ which satisfies 
\begin{equation}\label{z-growth-rate-bd}
0<z(s)e^{-\frac{\rho_1}{\beta}s}\le C_4\eta^{m-1}
\end{equation} in $(-\infty, -b_1)$ 
with
\begin{equation}\label{c4-defn}
C_4=\frac{C_3}{\beta}
\end{equation}
where $C_3$ be given by \eqref{c123-defn}. 
\end{lem} 
\begin{proof}
We will prove this result by using fixed point argument. Let $\eta>0$ and $b_1>0$.
We define the Banach space 
\begin{equation*}
\cX_{b_1}=\left\{(\widetilde{w},z): \widetilde{w},z\in C((-\infty,-b_1); \mathbb{R})\,\,\mbox{ such that }\,\,\|(\widetilde{w},z)\|_{\cX_{b_1}}<\infty\right\}
\end{equation*}
with norm 
\begin{equation*}
\|(\widetilde{w},z)\|_{\cX_{b_1}}=\max\left\{\|\widetilde{w}\|_{L^\infty\left((-\infty,-b_1);e^{-\frac{\rho_1s}{4\beta}}\right)},\|z\|_{L^\infty\left((-\infty,-b_1);e^{-\frac{\rho_1s}{2\beta}}\right)}\right\}.
\end{equation*}
where
\begin{equation*}
\|v\|_{L^\infty\left((-\infty,-b_1);e^{-\lambda s}\right)}=\sup_{-\infty<s<-b_1}\left|v(s)e^{-\lambda s}\right|
\end{equation*}
for any $\lambda>0$. Since by the l'Hospital rule,
\begin{equation*}
\lim_{a\to 0}\frac{\mbox{exp}\,\left(\frac{C_3 a}{\rho_1\eta^{1-m}}\right)-1}{a\eta^{m-1}}=\frac{C_3}{\rho_1},
\end{equation*}
there exists a constant $C_5>0$ depending on $C_3$, $\eta$ and $\rho_1$ such that
\begin{equation}\label{exp-term-bd}
\left|\frac{\mbox{exp}\,\left(\frac{C_3 a}{\rho_1\eta^{1-m}}\right)-1}{a\eta^{m-1}}\right|\le C_5\quad\forall 0<|a|\le 1.
\end{equation}
Let 
\begin{equation*} 
\cD_{b_1}:=\left\{ (\widetilde{w},z)\in \cX_{b_1}:  \|(\widetilde{w},z)-(\eta,0)\|_{\cX_{b_1}}\le\3_1\mbox{ and }\widetilde{w}(s)\ge\eta, 0\le z(s)e^{-\frac{\rho_1}{\beta}s}\le C_4\eta^{m-1}\,\forall s<-b_1\right\}
\end{equation*}
where 
\begin{equation}\label{epsilon1-defn}
\3_1:=\frac{1}{2}\min\left(1,\eta,\sqrt{\frac{C_3}{m}}-|C_1|\right).
\end{equation}
 We now choose
\begin{equation}\label{b1-range}
b_1>b_0:=\max \left(1,\frac{2\beta}{\rho_1}\left|\log\left(\frac{C_4\eta^{m-1}}{\3_1}\right)\right|,\frac{4\beta}{3\rho_1}\log\left(\frac{C_5\eta^m}{\3_1}\right),\frac{\beta}{\rho_1}\log\left(\frac{3\rho_1}{4\beta^2\eta^{1-m}}\right)\right).
\end{equation}  
Then $(\eta,\min (C_4\eta^{m-1},\3_1)e^{\rho_1s/\beta})\in \cD_{b_1}$. Hence $\cD_{b_1}\ne\phi$.
For any $(\widetilde{w},z)\in \cD_{b_1},$ let 
\begin{equation*}
\Phi(\widetilde{w},z):=\left(\Phi_1(\widetilde{w},z),\Phi_2(\widetilde{w},z)\right)
\end{equation*}  
be given by 
\begin{equation}\label{phi12-defn}
\left\{\begin{aligned}
&\Phi_1(\widetilde{w},z)(s):=\eta\, \mbox{exp}\left(\int_{-\infty}^sz(\rho)\,d\rho\right)\qquad\qquad\qquad\qquad\qquad\qquad\qquad\qquad\forall s<-b_1\\
&\Phi_2(\widetilde{w},z)(s):=C_3\int_{-\infty}^se^{a(\widetilde{w},\rho)-a(\widetilde{w},s)}\,d\rho -m\int_{-\infty}^se^{a(\widetilde{w},\rho)-a(\widetilde{w},s)}(z(\rho)+C_1))^2\,d\rho\quad\forall s<-b_1
\end{aligned}\right.
\end{equation}
where $C_1$, $C_2$, $C_3$ and $a(\widetilde{w},s)$ are given by \eqref{c123-defn} and \eqref{a-defn} respectively. 
We first prove  that  $\Phi(\cD_{b_1})\subset \cD_{b_1}$. Let $(\widetilde{w},z)\in \cD_{b_1}$. Then $\widetilde{w}(s)\ge\eta$ for any $s<-b_1$. Since $b_1>0$ and $\beta>0$ we have
\begin{align*}
&\|\widetilde{w}(s)-\eta\|_{L^\infty\left((-\infty,-b_1);e^{-\frac{\rho_1s}{4\beta}}\right)}
\le\3_1\le\eta/2\quad\forall s<-b_1\\
\Rightarrow\quad&|\widetilde{w}(s)-\eta|\le\frac{\eta}{2}\mbox{exp}\,\left(\frac{\rho_1s}{4\beta}\right)\le\frac{\eta}{2}\mbox{exp}\,\left(-\frac{\rho_1b_1}{4\beta}\right)\le\frac{\eta}{2}\quad\forall 
s<-b_1.
\end{align*}
Hence
\begin{equation}\label{tilde-w-upper-lower-bd}
\eta\le\widetilde{w}(s)\le 3\eta/2\quad\forall s<-b_1
\end{equation}
and by the definition of $\Phi_1(\widetilde{w},z)$,
\begin{equation}\label{Phi1-ge-eta}
\Phi_1(\widetilde{w},z)(s)\ge\eta\quad\forall s<-b_1.
\end{equation}
Now by \eqref{phi12-defn}, 
\begin{align}\label{Phi2-positive}
\Phi_2(\widetilde{w},z)(s)\ge&\left(C_3-m(\|z\|_{L^{\infty}(-\infty,-b_1)}+|C_1|)^2\right)\int_{-\infty}^se^{a(\widetilde{w},\rho)-a(\widetilde{w},s)}\,d\rho\notag\\
\ge&\left(C_3-m(\3_1+|C_1|)^2\right)\int_{-\infty}^se^{a(\widetilde{w},\rho)-a(\widetilde{w},s)}\,d\rho\notag\\
>&0\qquad\qquad\qquad\qquad\forall s<-b_1.
\end{align}
By \eqref{a-defn}, \eqref{tilde-w-upper-lower-bd} and the mean value theorem for any $\rho<s<-b_1$ there exists a constant $\xi\in (\rho,s)$ such that
\begin{equation}\label{a-ineqn}
a(\widetilde{w},\rho)-a(\widetilde{w},s)=\beta e^{-\frac{\rho_1}{\beta}\xi}\widetilde{w}^{1-m}(\xi)(\rho-s)
\le\beta\eta^{1-m}e^{-\frac{\rho_1}{\beta}s}(\rho-s)\quad\forall \rho<s<-b_1.
\end{equation}
Hence by \eqref{b1-range}, \eqref{phi12-defn}, \eqref{Phi2-positive} and \eqref{a-ineqn}, 
\begin{align}
&0<\Phi_2(\widetilde{w},z)(s)\le C_3\int_{-\infty}^s\mbox{exp}\left(\beta\eta^{1-m}e^{-\frac{\rho_1}{\beta}s}(\rho-s)\right)\,d\rho\le C_4\eta^{m-1}e^{\frac{\rho_1}{\beta}s}\notag\\
\Rightarrow\quad& 0<\Phi_2(\widetilde{w},z)(s)e^{-\frac{\rho_1}{\beta}s}\le C_4\eta^{m-1}\quad\forall s<-b_1\label{phi2-upper-bd8}\\
\Rightarrow\quad& 0<\Phi_2(\widetilde{w},z)(s)e^{-\frac{\rho_1 s}{2\beta}}\le C_4\eta^{m-1}e^{-\frac{\rho_1b_1}{2\beta}}\quad\forall s<-b_1\notag\\
\Rightarrow\quad&\|\Phi_2(\widetilde{w},z)\|_{L^\infty\left((-\infty,-b_1);e^{-\frac{\rho_1}{2\beta}s}\right)}\le \3_1.\label{phi2-upper-bd9}
\end{align}
Now by \eqref{exp-term-bd} and \eqref{b1-range},
\begin{align}\label{phi1-eta-bd}
\left|\Phi_1(\widetilde{w},z)(s)-\eta\right|e^{-\frac{\rho_1s}{4\beta}}
=&\eta\left( \mbox{exp}\,\left(\int_{-\infty}^sz(\rho)\,d\rho\right)-1\right)e^{-\frac{\rho_1s}{4\beta}}\quad\forall s<-b_1\notag\\
\le&\eta\left(\mbox{exp}\,\left(\int_{-\infty}^sC_4\eta^{m-1}e^{\frac{\rho_1}{\beta}\rho}\,d\rho\right)-1\right)e^{-\frac{\rho_1s}{4\beta}}\quad\forall s<-b_1\notag\\
=&\eta\left(\mbox{exp}\,\left(\frac{C_3\eta^{m-1}}{\rho_1}e^{\frac{\rho_1s}{\beta}}\right)-1\right)e^{-\frac{\rho_1s}{4\beta}}\quad\forall s<-b_1\notag\\
\le&C_5\eta^me^{\frac{3\rho_1s}{4\beta}}\qquad\qquad\quad\forall s<-b_1\notag\\
\le&C_5\eta^me^{-\frac{3\rho_1b_1}{4\beta}}\qquad\qquad\,\forall s<-b_1\notag\\
\le&\3_1\qquad\qquad\qquad\qquad\,\forall s<-b_1.
\end{align}
By \eqref{Phi1-ge-eta}, \eqref{phi2-upper-bd8}, \eqref{phi2-upper-bd9} and \eqref{phi1-eta-bd}, we get $\Phi(\cD_{b_1})\subset \cD_{b_1}$. 

We will now show that when $b_1$ is sufficiently large, $\Phi :\cD_{b_1}\to \cD_{b_1}$ is a contraction map. Let $(\widetilde{w}_1,z_1), (\widetilde{w}_2,z_2)\in \cD_{b_1}$ and $\delta:=\|(\widetilde{w}_1,z_1)-(\widetilde{w}_2,z_2)\|_{\cX_{b_1}}$. Then 
\begin{equation}\label{tilde-wi-upper-lower-bd}
\eta\le\widetilde{w}_i(s)\le 3\eta/2\quad\forall s<b_1, i=1,2.
\end{equation}
Hence  by \eqref{tilde-wi-upper-lower-bd} and an argument similar to the proof of \eqref{a-ineqn},
\begin{equation}\label{a-wi-ineqn}
a(\widetilde{w}_i,\rho)-a(\widetilde{w}_i,s)
\le\beta\eta^{1-m}e^{-\frac{\rho_1}{\beta}s}(\rho-s)\quad\forall \rho<s<-b_1,i=1,2.
\end{equation}
By \eqref{tilde-wi-upper-lower-bd} and the mean value theorem for any $s<-b_1$ there exists a constant $\xi$ between $\int_{-\infty}^sz_1(\rho)\,d\rho$ and $\int_{-\infty}^sz_2(\rho)\,d\rho$ such that 
\begin{align}\label{phi1-12-l1}
|\Phi_1(\widetilde{w}_1,z_1)(s)-\Phi_1(\widetilde{w}_2,z_2)(s)|e^{-\frac{\rho_1s}{4\beta}}=&\eta\left|\mbox{exp}\,\left(\int_{-\infty}^sz_1(\rho)\,d\rho\right)-\mbox{exp}\,\left(\int_{-\infty}^sz_2(\rho)\,d\rho\right)\right|e^{-\frac{\rho_1s}{4\beta}}\notag\\
=&\eta e^{\xi}\left|\int_{-\infty}^s(z_1(\rho)-z_2(\rho))\,d\rho\right|e^{-\frac{\rho_1s}{4\beta}}\notag\\
\le&e^{-\frac{\rho_1s}{4\beta}}\max(\widetilde{w}_1(s),\widetilde{w}_2(s))\|z_1-z_2\|_{L^{\infty}\left((-\infty,-b_1);e^{-\frac{\rho_1}{2\beta}s}\right)}
\int_{-\infty}^se^{\frac{\rho_1}{2\beta}\rho}\,d\rho\notag\\
\le&\frac{3\beta\eta}{\rho_1}e^{-\frac{\rho_1b_1}{4\beta}}\delta\quad\forall s<-b_1.
\end{align}
By \eqref{phi12-defn},
\begin{align}\label{Phi2-difference-bd}
|\Phi_2(\widetilde{w}_1,z_1)(s)-\Phi_2(\widetilde{w}_2,z_2)(s)|
\le&C_3\int_{-\infty}^s\left|e^{a(\widetilde{w}_1,\rho)-a(\widetilde{w}_1,s)}-e^{a(\widetilde{w}_2,\rho)-a(\widetilde{w}_2,s)}\right|\,d\rho\notag\\
&\quad +m\int_{-\infty}^se^{a(\widetilde{w}_1,\rho)-a(\widetilde{w}_1,s)}\left|(z_1(\rho)+C_1)^2-(z_2(\rho)+C_1)^2\right|\,d\rho\notag\\
&\quad+m\int_{-\infty}^s\left|e^{a(\widetilde{w}_1,\rho)-a(\widetilde{w}_1,s)}-e^{a(\widetilde{w}_2,\rho)-a(\widetilde{w}_2,s)}\right|(z_2(\rho)+C_1)^2\,d\rho\notag\\
\le&(C_3+m(\3_1+|C_1|)^2)I_1+mI_2\quad\forall s<-b_1
\end{align}
where
\begin{equation*}
I_1=\int_{-\infty}^s\left|e^{a(\widetilde{w}_1,\rho)-a(\widetilde{w}_1,s)}-e^{a(\widetilde{w}_2,\rho)-a(\widetilde{w}_2,s)}\right|\,d\rho\quad\forall s<-b_1
\end{equation*}
and
\begin{equation*}
I_2=\int_{-\infty}^se^{a(\widetilde{w}_1,\rho)-a(\widetilde{w}_1,s)}|z_1(\rho)-z_2(\rho)|\left|z_1(\rho)+z_2(\rho)+2C_1\right|\,d\rho\quad\forall s<-b_1.
\end{equation*}
Now
\begin{align}\label{I1-integrand-ineqn}
\left|e^{a(\widetilde{w}_1,\rho)-a(\widetilde{w}_1,s)}-e^{a(\widetilde{w}_2,\rho)-a(\widetilde{w}_2,s)}\right|
=&\left|\int_0^1\frac{\1}{\1 t}e^{t(a(\widetilde{w}_1,\rho)-a(\widetilde{w}_1,s))+(1-t)(a(\widetilde{w}_2,\rho)-a(\widetilde{w}_2,s))}\,dt\right|\notag\\
\le &H(\rho)\int_0^1e^{t(a(\widetilde{w}_1,\rho)-a(\widetilde{w}_1,s))+(1-t)(a(\widetilde{w}_2,\rho)-a(\widetilde{w}_2,s))}\,dt\quad\forall \rho\le s<-b_1
\end{align}
where
\begin{align}\label{H-eqn}
H(\rho):=&\left|a(\widetilde{w}_1,\rho)-a(\widetilde{w}_1,s)-(a(\widetilde{w}_2,\rho)-a(\widetilde{w}_2,s))\right|\notag\\
=&\beta\left|\int_{\rho}^se^{-\frac{\rho_1}{\beta}\rho'}(\widetilde{w}_1^{1-m}(\rho')-\widetilde{w}_2^{1-m}(\rho'))\,d\rho'\right|\quad\forall \rho\le s<-b_1.
\end{align}
Now by \eqref{tilde-wi-upper-lower-bd},
\begin{align}\label{widetilde-w12-difference-bd}
\left|\widetilde{w}_1^{1-m}(\rho)-\widetilde{w}_2^{1-m}(\rho)\right|
=&\left|\int_0^1\frac{\1}{\1 s'}\left(s'\widetilde{w}_1(\rho)+(1-s')\widetilde{w}_2(\rho)\right)^{1-m}\,ds'\right|\notag\\
\le&(1-m)|\widetilde{w}_1(\rho)-\widetilde{w}_2(\rho)|\int_0^1\left(s'\widetilde{w}_1(\rho)+(1-s')\widetilde{w}_2(\rho)\right)^{-m}\,ds'\notag\\
\le&(1-m)\eta^{-m}|\widetilde{w}_1(\rho)-\widetilde{w}_2(\rho)|\quad\forall \rho<-b_1.
\end{align}
Hence by \eqref{H-eqn} and \eqref{widetilde-w12-difference-bd},
\begin{align}\label{H-ineqn}
H(\rho)\le&(1-m)\beta\eta^{-m}\|\widetilde{w}_1-\widetilde{w}_2\|_{L^\infty\left((-\infty,-b_1);e^{-\frac{\rho_1s}{4\beta}}\right)}\int_{\rho}^se^{-\frac{3\rho_1}{4\beta}\rho'}\,d\rho'\notag\\
\le&\frac{2\beta^2}{\rho_1\eta^m}\left(e^{-\frac{3\rho_1}{4\beta}\rho}-e^{-\frac{3\rho_1}{4\beta}s}\right)\delta
\qquad\qquad\qquad\forall \rho\le s<-b_1.
\end{align}
By \eqref{a-wi-ineqn}, \eqref{I1-integrand-ineqn} and \eqref{H-ineqn},
\begin{align}\label{exp-a-difference}
&\left|e^{a(\widetilde{w}_1,\rho)-a(\widetilde{w}_1,s)}-e^{a(\widetilde{w}_2,\rho)-a(\widetilde{w}_2,s)}\right|\notag\\
\le&\frac{2\beta^2}{\rho_1\eta^m}\left\{\mbox{exp}\,\left(\beta\eta^{1-m}e^{-\frac{\rho_1}{\beta}s}(\rho-s)-\frac{3\rho_1\rho}{4\beta}\right)
-\mbox{exp}\,\left(\beta\eta^{1-m}e^{-\frac{\rho_1}{\beta}s}(\rho-s)-\frac{3\rho_1 s}{4\beta}\right)\right\}\delta\quad\forall \rho\le s<-b_1.
\end{align}
Hence by \eqref{b1-range} and \eqref{exp-a-difference},
\begin{align}\label{I1-ineqn}
I_1\le&\frac{2\beta^2\delta}{\rho_1\eta^m}\int_{-\infty}^s\left\{\mbox{exp}\,\left(\beta\eta^{1-m}e^{-\frac{\rho_1}{\beta}s}(\rho-s)-\frac{3\rho_1\rho}{4\beta}\right)
-\mbox{exp}\,\left(\beta\eta^{1-m}e^{-\frac{\rho_1}{\beta}s}(\rho-s)-\frac{3\rho_1 s}{4\beta}\right)\right\}\,d\rho\notag\\
\le&\frac{2\beta^2\delta}{\rho_1\eta^m}\left(\frac{e^{-\frac{3\rho_1s}{4\beta}}}{\beta\eta^{1-m}e^{-\frac{\rho_1}{\beta}s}-\frac{3\rho_1}{4\beta}}
-\frac{e^{-\frac{3\rho_1s}{4\beta}}}{\beta\eta^{1-m}e^{-\frac{\rho_1}{\beta}s}}\right)\quad\forall s<-b_1\notag\\
\le&\frac{2\eta^{-1}e^{\frac{5\rho_1s}{4\beta}}\delta}{\beta\eta^{1-m}
-\frac{3\rho_1}{4\beta}e^{\frac{\rho_1s}{\beta}}}\quad\forall s<-b_1.
\end{align}
Now by \eqref{a-wi-ineqn},
\begin{align}\label{I2-ineqn}
I_2\le&(\|z_1\|_{L^{\infty}(-\infty,-b_1)}+\|z_2\|_{L^{\infty}(-\infty,-b_1)}+2|C_1|)\delta\int_{-\infty}^s\mbox{ exp}\,\left(\beta\eta^{1-m}e^{-\frac{\rho_1}{\beta}s}(\rho-s)+\frac{\rho_1}{2\beta}\rho\right)\,d\rho\notag\\
\le&\frac{2(\3_1+|C_1|)e^{\frac{\rho_1s}{2\beta}}}{\beta\eta^{1-m}
e^{-\frac{\rho_1}{\beta}s}+\frac{\rho_1}{2\beta}}\delta\quad\forall s<-b_1.
\end{align}
Hence by \eqref{Phi2-difference-bd}, \eqref{I1-ineqn} and \eqref{I2-ineqn},
\begin{equation}\label{phi2-12-l1}
|\Phi_2(\widetilde{w}_1,z_1)(s)-\Phi_2(\widetilde{w}_2,z_2)(s)|e^{-\frac{\rho_1s}{2\beta}}
\le\left\{\frac{2(C_3+m(\3_1+|C_1|)^2)\eta^{-1}e^{-\frac{3\rho_1b_1}{4\beta}}}{\beta\eta^{1-m}
-\frac{3\rho_1}{4\beta}e^{-\frac{\rho_1b_1}{\beta}}}
+\frac{2m(\3_1+|C_1|)}{\frac{\rho_1}{2\beta}+\beta\eta^{1-m}
e^{\frac{\rho_1}{\beta}b_1}}\right\}\delta
\end{equation}
holds for any  $s<-b_1$.
By choosing $b_1$ sufficiently large in \eqref{phi1-12-l1} and \eqref{phi2-12-l1} we get
\begin{equation}\label{phi1-contraction}
\|\Phi_1(\widetilde{w}_1,z_1)-\Phi_1(\widetilde{w}_2,z_2)\|_{L^\infty\left((-\infty,-b_1);e^{-\frac{\rho_1}{4\beta}s}\right)}\le\frac{\delta}{5}.
\end{equation}
and
\begin{equation}\label{phi2-contraction}
\|\Phi_2(\widetilde{w}_1,z_1)-\Phi_2(\widetilde{w}_2,z_2)\|_{L^\infty\left((-\infty,-b_1);e^{-\frac{\rho_1}{2\beta}s}\right)}\le\frac{\delta}{5}.
\end{equation}
By \eqref{phi1-contraction} and \eqref{phi2-contraction},  for sufficiently large $b_1$ we have
\begin{equation}\label{phi-contraction}
\|\Phi(\widetilde{w}_1,z_1)-\Phi(\widetilde{w}_2,z_2)\|_{\cX_{b_1}}\le\frac{\delta}{5}.
\end{equation}
Hence we can choose a  sufficiently large  $b_1$ such that the map $\Phi:\cD_{b_1}\to \cD_{b_1}$ is a contraction map with Lipschitz constant less than $1/5$. Since $\cD_{b_1}$ is a complete metric space, by the contraction mapping theorem there exists a unique fixed point 
$(\widetilde{w},z)=\Phi(\widetilde{w},z)\in\cD_{b_1}$ that satisfies \eqref{z-representation-formula},   \eqref{tilde-w-formula} and 
\begin{equation}\label{z-growth-rate-bd11}
\widetilde{w}(s)\ge\eta,\quad 0\le z(s)e^{-\frac{\rho_1}{\beta}s}\le C_4\eta^{m-1}\quad\forall s<-b_1.
\end{equation} 
Since $z(s)=\Phi_2(\widetilde{w},z)(s)$, by \eqref{Phi2-positive}, $z(s)>0$ for any $s<-b_1$. This together with  
\eqref{z-growth-rate-bd11} implies \eqref{z-growth-rate-bd} holds for any $s<-b_1$.
Multiplying \eqref{z-representation-formula} by $e^{a(\widetilde{w},s)}$ and then differentiating the equation with respect to $s$ we get that $z$ satisfies \eqref{z-eqn} in $(-\infty,-b_1)$.

We will now prove uniqueness of solution.
Suppose that $z_3\in C^1(-\infty,-b_1)$ is another solution of \eqref{z-eqn} in $(-\infty,-b_1)$   with $\widetilde{w}$ being replaced by  
\begin{equation}\label{tilde-w3-formula} 
\widetilde{w}_3(s)=w_3(e^s)=\eta\, \mbox{exp}\left(\int_{-\infty}^sz_3(\rho)\,d\rho\right)\quad\forall s<-b_1
\end{equation} 
which satisfies 
\begin{equation}\label{z3-growth-rate-bd}
0<z_3(s)e^{-\frac{\rho_1}{\beta}s}\le C_4\eta^{m-1}
\end{equation}
in $(-\infty,-b_1)$. Since  $b_1>b_0$, by \eqref{b1-range}, \eqref{tilde-w3-formula} and \eqref{z3-growth-rate-bd},
\begin{equation}\label{z3-weight-infty-norm-bd}
\|z_3\|_{L^\infty\left((-\infty,-b_1);e^{-\frac{\rho_1}{2\beta}s}\right)}\le C_4\eta^{m-1}e^{-\frac{\rho_1b_1}{2\beta}}<\3_1.
\end{equation} 
and 
\begin{equation}\label{w3-lower-bd}
\widetilde{w}_3(s)\ge\eta\quad\forall s<-b_1.
\end{equation} 
By  \eqref{tilde-w3-formula}, \eqref{z3-growth-rate-bd} and an argument similar to the proof of \eqref{phi1-eta-bd},
\begin{equation}\label{tilde-w-eta-bd}
\|\widetilde{w}_3-\eta\|_{L^\infty\left((-\infty,-b_1);e^{-\frac{\rho_1}{4\beta}s}\right)}\le\3_1\quad\forall s<-b_1.
\end{equation}
Moreover by the discussion in the introduction of the section, $z_3$ satisfies \eqref{z-representation-formula}
with $a(\widetilde{w},s)$ given by \eqref{a-defn} being replaced by $a(\widetilde{w}_3,s)$.
This together with  \eqref{z3-growth-rate-bd}, \eqref{z3-weight-infty-norm-bd}, \eqref{w3-lower-bd}  and \eqref{tilde-w-eta-bd} implies that $(\widetilde{w}_3,z_3)\in\cD_{b_1}$ and $(\widetilde{w}_3,z_3)=\Phi (\widetilde{w}_3,z_3)$. Thus $(\widetilde{w}_3,z_3)$ is a fixed point of the mapping $\Phi:\cD_{b_1}\to \cD_{b_1}$. Hence by the uniqueness of the fixed point for the mapping $\Phi:\cD_{b_1}\to \cD_{b_1}$, $\widetilde{w}_3=\widetilde{w}$, $z_3=z$, in $(-\infty,-b_1)$ and the lemma follows.

\end{proof}

\begin{thm}\label{z-existence-thm}
Let $n\ge 3$, $0<m<\frac{n-2}{n}$, $\rho_1>0$, $\eta>0$, $\beta>\frac{m\rho_1}{n-2-nm}$ and $\alpha$ satisfies \eqref{alpha-beta-relation2}.   Then  the equation \eqref{z-eqn} has a unique  solution $z\in C^1(\mathbb{R})$   in $\mathbb{R}$ that satisfies \eqref{z-growth-rate-bd} in $\mathbb{R}$
with $\widetilde{w}$ given by  \eqref{tilde-w-formula} where $C_4$ is given by \eqref{c4-defn}. 
\end{thm}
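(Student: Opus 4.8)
The plan is to continue forward in $s$ the local solution of \eqref{z-eqn} on $(-\infty,-b_1)$ furnished by Lemma~\ref{z-local-existence-lem}, to prove that the continuation is global and preserves \eqref{z-growth-rate-bd}, and finally to deduce uniqueness from the uniqueness statement of Lemma~\ref{z-local-existence-lem} together with uniqueness for an ordinary differential equation. The first step is to turn the non-local equation \eqref{z-eqn} into a genuine ODE system by setting $u=\log\widetilde w$: since \eqref{tilde-w-formula} gives $\widetilde w_s=z\widetilde w$, the pair $(u,z)$ satisfies
\begin{equation*}
u_s=z,\qquad z_s=C_3-m(z+C_1)^2-\beta e^{-\frac{\rho_1}{\beta}s}e^{(1-m)u}z,
\end{equation*}
whose right-hand side is smooth in $(u,z)$ and continuous in $s$ on all of $\mathbb{R}^3$. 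Fixing some $s_0<-b_1$, Lemma~\ref{z-local-existence-lem} gives $z(s_0)>0$ and $\widetilde w(s_0)\ge\eta$, and the Picard--Lindel\"of theorem produces a unique maximal solution $(u,z)$ of this system through $(s_0,\log\widetilde w(s_0),z(s_0))$ on an interval $(-\infty,S)$ with $S\in(s_0,\infty]$ (it extends to $-\infty$ because on $(-\infty,s_0]$ it must coincide with the solution of Lemma~\ref{z-local-existence-lem}). Writing $\widetilde w:=e^u$ we recover $\widetilde w(s)=\widetilde w(s_0)\exp(\int_{s_0}^sz)=\eta\exp(\int_{-\infty}^sz)$, so \eqref{tilde-w-formula} persists and $z$ solves \eqref{z-eqn} on $(-\infty,S)$.

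The heart of the argument is a set of a priori bounds on $(-\infty,S)$. The crucial point, and the place where the hypothesis $\beta>\frac{m\rho_1}{n-2-nm}$ (i.e.\ $C_2>0$) is used, is that $z$ stays positive: if $s_1\in(s_0,S)$ were the first zero of $z$, the equation would give $z_s(s_1)=C_3-mC_1^2=C_2>0$, contradicting $z_s(s_1)\le0$, which holds because $z>0$ on $(s_0,s_1)$ and $z(s_1)=0$. Hence $z>0$ throughout, so $u_s=z>0$ yields $\widetilde w(s)\ge\widetilde w(s_0)\ge\eta$ on $[s_0,S)$. Setting $\zeta(s):=z(s)e^{-\frac{\rho_1}{\beta}s}>0$, a direct computation from \eqref{z-eqn2} gives
\begin{equation*}
\zeta_s=e^{-\frac{\rho_1}{\beta}s}\big(C_3-m(z+C_1)^2-\beta\widetilde w^{1-m}\zeta\big)-\tfrac{\rho_1}{\beta}\zeta\le\beta\eta^{1-m}e^{-\frac{\rho_1}{\beta}s}\big(C_4\eta^{m-1}-\zeta\big),
\end{equation*}
where I dropped the nonpositive terms $-m(z+C_1)^2e^{-\frac{\rho_1}{\beta}s}$ and $-\frac{\rho_1}{\beta}\zeta$ and used $\widetilde w\ge\eta$ together with $C_4=C_3/\beta$. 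Multiplying by the integrating factor $\mu(s):=\exp\!\left(\beta\eta^{1-m}\int_{s_0}^se^{-\frac{\rho_1}{\beta}\rho}\,d\rho\right)$ shows that $(\zeta-C_4\eta^{m-1})\mu$ is nonincreasing on $[s_0,S)$; since $\zeta(s_0)\le C_4\eta^{m-1}$ by Lemma~\ref{z-local-existence-lem}, we obtain $0<\zeta(s)\le C_4\eta^{m-1}$ on $[s_0,S)$, which together with Lemma~\ref{z-local-existence-lem} is precisely \eqref{z-growth-rate-bd} on $(-\infty,S)$.

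These bounds force $S=\infty$: on any $[s_0,S')$ with $S'<S$ one has $0<z\le C_4\eta^{m-1}e^{\rho_1S'/\beta}$ and, integrating $u_s=z$, $\log\eta\le u\le u(s_0)+C_4\eta^{m-1}e^{\rho_1S'/\beta}(S'-s_0)$, so $(u,z)$ stays in a compact set; since the right-hand side of the system is defined everywhere, the maximal solution cannot terminate at a finite $S$. Thus $z$ solves \eqref{z-eqn} on all of $\mathbb{R}$ with $\widetilde w$ given by \eqref{tilde-w-formula} and satisfies \eqref{z-growth-rate-bd}, which proves existence. For uniqueness, let $z^{(1)},z^{(2)}$ be two solutions of \eqref{z-eqn} on $\mathbb{R}$ satisfying \eqref{z-growth-rate-bd}, with $\widetilde w^{(i)}$ given by \eqref{tilde-w-formula}; the defining integrals converge and $z^{(i)}(s)\to0$ as $s\to-\infty$ by \eqref{z-growth-rate-bd}, so the restriction of each $z^{(i)}$ to $(-\infty,-b_1)$ satisfies all the hypotheses of Lemma~\ref{z-local-existence-lem}, whence $z^{(1)}\equiv z^{(2)}$ on $(-\infty,-b_1)$. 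The two solutions then solve the same ODE system on $[-b_1,\infty)$ with identical data at any point $<-b_1$, so they coincide on all of $\mathbb{R}$ by uniqueness for the initial value problem.

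The main obstacle is the global continuation: the Riccati term $-m(z+C_1)^2$ drives $z\to-\infty$ in finite ``time'' once $z$ is large and negative, and the factor $\widetilde w^{1-m}$ degenerates if $\widetilde w\to0$, so without control of the sign of $z$ and of the size of $\widetilde w$ the continuation could break down. Establishing $z>0$ --- which is exactly where the structural hypothesis $\beta>\frac{m\rho_1}{n-2-nm}$ enters, through $C_2>0$ --- and then propagating the explicit bound $\zeta\le C_4\eta^{m-1}$ via the differential inequality above are the substantive steps; everything else is routine ODE continuation plus the uniqueness already proved in Lemma~\ref{z-local-existence-lem}.
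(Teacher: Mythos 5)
Your proof is correct, and it takes a genuinely cleaner route than the paper's for the existence half. The paper's continuation argument treats $\widetilde w$ as an integral of $z$ and, at each candidate continuation point $s_i$, re-runs a copy of the fixed-point argument of Lemma~\ref{z-local-existence-lem} to extend the solution a little further, then re-derives the integral representation formula \eqref{z-representation-formula} on the extended interval to recover the bound \eqref{z-growth-rate-bd} and contradict maximality. You sidestep all of that by introducing $u=\log\widetilde w$, which turns the non-local equation into a bona fide first-order system for $(u,z)$ to which Picard--Lindel\"of applies directly, and then you obtain the bound $\zeta=z e^{-\rho_1 s/\beta}\le C_4\eta^{m-1}$ via the differential inequality $\zeta_s\le\beta\eta^{1-m}e^{-\rho_1 s/\beta}(C_4\eta^{m-1}-\zeta)$ and an integrating factor, rather than by re-inspecting the integral formula; the resulting a priori boundedness of $(u,z)$ on any finite window then rules out finite escape time by the standard continuation criterion. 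The positivity argument (evaluate \eqref{z-eqn} at a hypothetical first zero to get $z_s=C_2>0$) and the uniqueness argument (match the local uniqueness from Lemma~\ref{z-local-existence-lem} on $(-\infty,-b_1)$ with forward ODE uniqueness) are the same as in the paper. The trade-offs: your ODE-system reformulation is shorter and avoids repeating the fixed-point machinery, and the Gronwall-type inequality gives \eqref{z-growth-rate-bd} more transparently than the representation formula; the paper's approach, on the other hand, keeps everything in the framework of Lemma~\ref{z-local-existence-lem}, so it can literally cite that lemma's proof rather than introduce a new phase variable.
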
 
\begin{proof}
By Lemma \ref{z-local-existence-lem} there exists a constant $b_1>0$ such that the equation \eqref{z-eqn}
has a unique  solution $z\in C^1(-\infty, b_1)$   in $(-\infty,-b_1)$ that satisfies 
\eqref{z-growth-rate-bd} in $(-\infty, -b_1)$ with $\widetilde{w}$ given by 
\eqref{tilde-w-formula} in $(-\infty, -b_1)$. 
Let $(-\infty,b_2)$ be the maximal interval of existence of solution $z$
of \eqref{z-eqn} in $(-\infty,b_2)$ that satisfies \eqref{z-growth-rate-bd} in $(-\infty, b_2)$ with  $\widetilde{w}$ given by \eqref{tilde-w-formula} in $(-\infty, b_2)$. Then $b_2\ge -b_1$.

Suppose $b_2<\infty$. We choose a sequence $\{s_i\}_{i=1}^{\infty}\in (-\infty, b_2)$ such that $s_i\to b_2$ as $i\to\infty$. Then by \eqref{z-growth-rate-bd} and passing to a subsequence if necessary $\lim_{i\to\infty}z(s_i)$ exists  and
\begin{equation}\label{z-si-limit-bd}
0\le \lim_{i\to\infty}z(s_i)\le C_4e^{\frac{\rho_1b_2}{\beta}}.
\end{equation}
Then by \eqref{z-eqn}, \eqref{tilde-w-formula} and \eqref{z-si-limit-bd} we get that $z$ and $\widetilde{w}$ satisfies an ODE system with uniformly Lipschitz coefficients in neighbourhoods of $(z(s_i),\widetilde{w}(s_i))$, $i\in\Z^+$. Hence by standard ODE theory there exists a constant $\delta_1>0$ such that for any $i\in\mathbb{Z}^+$ the equation \eqref{z-eqn} has a solution $z_1$ in $(s_i, s_i+\delta_1)$ with $z_1(s_i)=z(s_i)$  and $\widetilde{w}(s)$ being replaced by
\begin{equation*}
\widetilde{w}_1(s)=\widetilde{w}(s_i)\, \mbox{exp}\left(\int_{s_i}^sz_1(\rho)\,d\rho\right)\quad\forall s_i\le s<s_i+\delta_1.
\end{equation*}
We now choose $i_0\in\mathbb{Z}^+$ such that $s_{i_0}+\delta_1>b_2$. We extend $z$, $w$, $\widetilde{w}$, to functions on $(-\infty,s_{i_0}+\delta_1)$ by defining $z(s)=z_1(s)$ and $\widetilde{w}(s)=\widetilde{w}_1(s)$ for any
$s\in (s_{i_0},s_{i_0}+\delta_1)$. Then $z$ satisfies \eqref{z-eqn} in $(-\infty, s_{i_0}+\delta_1)$ with $\widetilde{w}$
satisfying  \eqref{tilde-w-formula} in $(-\infty, s_{i_0}+\delta_1)$. We claim that 
\begin{equation}\label{z-positive3}
z(s)>0\quad\forall s<s_{i_0}+\delta_1.
\end{equation}
Suppose the claim does not hold. Then there exists a constant $b_3\in [-b_1,s_{i_0}+\delta_1)$ such that $z(b_3)=0$. Let
\begin{equation*}
b_4=\sup\{b\in\mathbb{R}:z(s)>0\quad\forall s<b\}.
\end{equation*}
Then since \eqref{z-growth-rate-bd} holds for any $s<-b_1$, we have $-b_1\le b_4\le b_3$, $z(s)>0$ for any $s<b_4$ and $z(b_4)=0$. Hence $z_s(b_4)\le 0$. On the other hand by putting $s=b_4$ in \eqref{z-eqn} we have
\begin{equation*}
z_s(b_4)=\frac{\alpha}{\beta}\left(n-2-\frac{m\alpha}{\beta}\right)
=\frac{\alpha}{\beta^2(1-m)(n-2-nm)}\left(\beta-\frac{m\rho_1}{n-2-nm}\right)>0
\end{equation*}
and contradiction arises. Hence no such constant $b_3$ exists and \eqref{z-positive3} holds. 
By \eqref{z-eqn}, $z$ satisfies 
\eqref{z-representation-formula} in $(-\infty, s_{i_0}+\delta_1)$. Then by \eqref{z-positive3},
\begin{equation}\label{tilde-w-lower-bd5}
\widetilde{w}(s)\ge\eta\quad\forall s<s_{i_0}+\delta_1.
\end{equation}
Then by \eqref{z-representation-formula}, \eqref{z-positive3}, \eqref{tilde-w-lower-bd5} and an argument similar as before, $z$ satisfies \eqref{z-growth-rate-bd} for any $s<s_{i_0}+\delta_1$. This contradicts the maximality of the choice of $b_2$. Hence $b_2=\infty$ and $z$ is a solution of  \eqref{z-eqn} in $\mathbb{R}$ that satisfies \eqref{z-growth-rate-bd} in $\mathbb{R}$
with $\widetilde{w}$ given by  \eqref{tilde-w-formula}.

Suppose now $z_3\in C^1(-\infty, b_1)$ is another solution of \eqref{z-eqn} in $\mathbb{R}$ that satisfies \eqref{z3-growth-rate-bd}  in $\mathbb{R}$ with $\widetilde{w}(s)$ being replaced by $\widetilde{w}_3$ given by \eqref{tilde-w3-formula}. Then  by \eqref{z-eqn} for $z_3$ and the argument in the beginning of this section,  $z_3$ satisfies 
\begin{equation}\label{z3-representation-formula}
z_3(s)=C_3\int_{-\infty}^se^{a(\widetilde{w}_3,\rho)-a(\widetilde{w}_3,s)}\,d\rho -m\int_{-\infty}^se^{a(\widetilde{w}_3,\rho)-a(\widetilde{w}_3,s)}(z_3(\rho)+C_1))^2\,d\rho
\end{equation}
in $\mathbb{R}$.
 Let $\3_1>0$, $b_1>b_0>0$ and the map $\Phi:\cD_{b_1}\to \cD_{b_1}$ be as in the proof of Lemma  \ref{z-local-existence-lem}. 
Then by \eqref{tilde-w3-formula}, \eqref{z3-growth-rate-bd}  and  an argument similar to the proof of Lemma  \ref{z-local-existence-lem}, we get \eqref{z3-weight-infty-norm-bd}, \eqref{w3-lower-bd} and
\eqref{tilde-w-eta-bd}. These together with  \eqref{z3-representation-formula} implies that $(\widetilde{w}_3,z_3)\in\cD_{b_1}$ and $(\widetilde{w}_3,z_3)=\Phi (\widetilde{w}_3,z_3)$. Thus $(\widetilde{w}_3,z_3)$ is a fixed point of the mapping $\Phi:\cD_{b_1}\to \cD_{b_1}$. Hence by the uniqueness of the fixed point for the mapping $\Phi:\cD_{b_1}\to \cD_{b_1}$, $\widetilde{w}_3=\widetilde{w}$, $z_3=z$, in $(-\infty,-b_1)$. We now choose $b_2<-b_1$. Then both $z$ and $z_3$ satisfies \eqref{z-eqn} in $(b_2,\infty)$ with $\widetilde{w}=\widetilde{w}, \widetilde{w}_3$, respectively and $z(b_2)=z_3(b_2)$. Hence by the standard ODE theory, $z=z_3$ in $[b_2,\infty)$. Thus $z=z_3$ in $\mathbb{R}$ and the theorem follows.

\end{proof}

\noindent{\ni{\it Proof of Theorem \ref{blow-up-self-similar-soln-thm}:}} We divide the proof of Theorem \ref{blow-up-self-similar-soln-thm} into two cases.

\noindent $\underline{\text{\bf Case 1}}$: In this case we will prove that there exists a  unique solution $f=f^{(m)}$ of \eqref{f-ode} in $C^2(0,\infty)$ that satisfies \eqref{blow-up-rate-x=0} and
\eqref{z-growth-rate-bd} in $\mathbb{R}$ where $C_4$ is given by \eqref{c4-defn}.
 
\noindent $\text{\it Proof of case 1}$: By Theorem \ref{z-existence-thm}, the equation \eqref{z-eqn}
has a unique  solution $z\in C^1(\mathbb{R})$   in $\mathbb{R}$ that satisfies 
\eqref{z-growth-rate-bd} with $\widetilde{w}$ given by \eqref{tilde-w-formula}  in $\mathbb{R}$. Let $w$ be given by \eqref{tilde-w-defn}. By \eqref{z-growth-rate-bd} and \eqref{tilde-w-formula},
\begin{equation}\label{w-limit-at-origin}
\lim_{r\to 0^+}w(r)=\eta.
\end{equation} 
Hence by defining $w(0)=\eta$ we extend $w$ to a continuous function on $\mathbb{R}$.
Differentiating \eqref{tilde-w-formula} with respect to $s$,
\begin{equation}\label{z-w-wr-relation}
z(s)=\frac{\widetilde{w}_s(s)}{\widetilde{w}(s)}=\frac{rw_r(r)}{w(r)}\quad\forall 0<r=e^s\in\mathbb{R}.
\end{equation}
Substituting \eqref{z-w-wr-relation} into \eqref{z-eqn}, we get \eqref{w1-r-eqn}. 
Let
\begin{equation*}
f(r)=r^{-\alpha/\beta}w(r).
\end{equation*}
Substituting $w(r)=r^{\alpha/\beta}f(r)$ into \eqref{w1-r-eqn}, we get that $f$ satisfies 
\eqref{f-ode}.
 By \eqref{w-limit-at-origin}, $f$ satisfies \eqref{blow-up-rate-x=0}.

We will now prove the uniqueness of solution $f$.
Suppose $f_3\in C^2(0,\infty)$ is another solution of \eqref{f-ode}, \eqref{blow-up-rate-x=0}, that satisfies \eqref{z3-growth-rate-bd} in $\mathbb{R}$  where  $s=\log r$, 
\begin{equation}\label{w3-defn2}
w_3(r)=r^{\alpha/\beta}f_3(r),\quad\widetilde{w}_3(s)=w_3(e^s)\quad\forall r>0, s\in\mathbb{R}
\end{equation}
and 
\begin{equation}\label{z3-w3-w3r-relation}
z_3(s)=\frac{\widetilde{w}_{3,s}(s)}{\widetilde{w}_3(s)}=\frac{rw_{3,r}(r)}{w_3(r)}\quad\forall 0<r=e^s\in\mathbb{R}.
\end{equation} 
Then by the discussion in the beginning of this section, $z_3\in C^1(\mathbb{R})$ satisfies \eqref{z-eqn} with 
$\widetilde{w}$ being replaced by $\widetilde{w}_3$. Then by Theorem \ref{z-existence-thm}, $z=z_3$ and $w=w_3$. Thus $f=f_3$ and case 1 follows.

\noindent $\underline{\text{\bf Case 2}}$: In this case we will prove that there exists a  unique solution $f=f^{(m)}\in C^2(0,\infty)$ of \eqref{f-ode} that satisfies \eqref{blow-up-rate-x=0}.
 
\noindent $\text{\it Proof of case 2}$: Existence of  solution $f=f^{(m)}\in C^2(0,\infty)$ of  \eqref{f-ode} that satisfies \eqref{blow-up-rate-x=0} and \eqref{z-growth-rate-bd} in $\mathbb{R}$ is given by case 1. 
Hence we only need to prove uniqueness of solution $f\in C^2(0,\infty)$ of  \eqref{f-ode} that satisfies \eqref{blow-up-rate-x=0}  without the requirement that \eqref{z-growth-rate-bd} holds in $\mathbb{R}$.

Suppose $f_3\in C^2(0,\infty)$ is another solution of \eqref{f-ode} and \eqref{blow-up-rate-x=0}. 
Let $s=\log r$, $w_3(r)$, $\widetilde{w}_3(s)$ and $z_3(s)$ be given by \eqref{w3-defn2} and \eqref{z3-w3-w3r-relation}. We claim that $z_3$ also satisfies \eqref{z3-growth-rate-bd} in $\mathbb{R}$. In order to prove this claim we first observe that by  \eqref{z3-w3-w3r-relation} and the discussion in the beginning of section 2, $z_3\in C^1(\mathbb{R})$ satisfies 
\begin{align}
&z_{3,s}+\left(n-2-\frac{2m\alpha}{\beta}\right)z_3+mz_3^2+\beta e^{-\frac{\rho_1}{\beta}s}\widetilde{w}_3^{1-m}z_3=\frac{\alpha}{\beta}\left(n-2-\frac{m\alpha}{\beta}\right)=:C_2\quad\forall s\in\mathbb{R}\label{z3-eqn}\\
\Leftrightarrow\quad&z_{3,s}+m(z_3+C_1)^2+\beta e^{-\frac{\rho_1}{\beta}s}\widetilde{w}_3^{1-m}z_3=C_3\quad\forall s\in\mathbb{R}\label{z3-eqn2}
\end{align} 
where $C_1$, $C_3$, are given by \eqref{c123-defn} and $C_2>0$.
By \eqref{blow-up-rate-x=0} for $f_3$ and \eqref{w3-defn2}, 
\begin{equation}\label{w3-limit3}
\lim_{r\to 0}w_3(r)=\eta.
\end{equation}
 Hence by 
letting $w_3(0)=\eta$, we extend $w_3$ to a continuous function in $[0,\infty)$. Then there exists a constant $b_1>0$ such that
\begin{equation}\label{tilde-w3-upper-lower-bd}
\eta/2\le \widetilde{w}_3(s)\le 3\eta/2\quad\forall s<-b_1.
\end{equation}
We now choose a sequence $\{s_i\}_{i=1}^{\infty}\subset (-\infty, -b_1)$ such that 
\begin{equation}\label{si-limit}
s_{i+1}<s_i-i\quad\forall i\in\mathbb{Z}^+.
\end{equation}
Then by \eqref{w3-limit3}, \eqref{tilde-w3-upper-lower-bd}, \eqref{si-limit} and the mean value theorem for any $i\in\mathbb{Z}^+$, there exists a constant $\xi_i\in (s_{i+1},s_i)$ such that
\begin{align}\label{z3-limit}
&|\widetilde{w}_{3,s}(\xi_i)|=\left|\frac{\widetilde{w}_3(s_i)-\widetilde{w}_3(s_{i+1})}{s_i-s_{i+1}}\right|
\le\frac{\eta}{i}\quad\forall i\in\mathbb{Z}^+\notag\\
\Rightarrow\quad&\lim_{i\to\infty}\widetilde{w}_{3,s}(\xi_i)=0\notag\\
\Rightarrow\quad&\lim_{i\to\infty}z_3(\xi_i)=\lim_{i\to\infty}\frac{\widetilde{w}_{3,s}(\xi_i)}{\widetilde{w}_3(\xi_i)}=\frac{0}{\eta}=0.
\end{align}
For any $j\in\mathbb{Z}^+$, let $a_j=\xi_j+(2/C_2)(1+|z_3(\xi_j)|)$ and let $M=\sup_{j\in\mathbb{Z}^+}|z_3(\xi_j)|$. By \eqref{z3-limit}, $M<\infty$.  Choose $b_2>b_1$ such that
\begin{equation*}
n-2-\frac{2m\alpha}{\beta}-mM+\beta e^{-\frac{\rho_1}{\beta}s}(\eta/2)^{1-m}>0\quad\forall s<-b_2.
\end{equation*}
Then
\begin{equation}\label{ineqn1}
n-2-\frac{2m\alpha}{\beta}+mz(\xi_j)+\beta e^{-\frac{\rho_1}{\beta}s}(\eta/2)^{1-m}>0\quad\forall s<-b_2, j\in\mathbb{Z}^+.
\end{equation}
By choosing a subsequence if necessary we may assume without loss of generality  that $a_j<-b_2$  for any $j\in\mathbb{Z}^+$. 
We claim that for any $i\in\mathbb{Z}^+$,
\begin{equation}\label{z3-positive-local}
\mbox{ there exists }a_i'\in (\xi_i,a_i)\mbox{ such that }z_3(a_i')>0. 
\end{equation} 
For any $i\in\mathbb{Z}^+$ if $z_3(\xi_i)>0$ , then \eqref{z3-positive-local} follows immediately.  Hence if the claim \eqref{z3-positive-local} does not hold, then there exists $i\in\mathbb{Z}^+$ such that
\begin{equation}\label{z-xi-negative}
z_3(\xi_i)\le 0.
\end{equation}
Then by \eqref{z3-eqn}, \eqref{tilde-w3-upper-lower-bd}, \eqref{ineqn1} and \eqref{z-xi-negative} we have
\begin{align}\label{z3s-positive}
C_2=&z_{3,s}(\xi_i)+\left(n-2-\frac{2m\alpha}{\beta}+mz_3(\xi_i)+\beta e^{-\frac{\rho_1}{\beta}\xi_i}\widetilde{w}_3^{1-m}(\xi_i)\right)z_3(\xi_i)\notag\\
\le &z_{3,s}(\xi_i)+\left(n-2-\frac{2m\alpha}{\beta}+mz_3(\xi_i)+\beta e^{-\frac{\rho_1}{\beta}\xi_i}(\eta/2)^{1-m}\right)z_3(\xi_i)\notag\\
\le&z_{3,s}(\xi_i).
\end{align}
By \eqref{z3s-positive} and continuity of $z_{3,s}$,  there exists a constant $\delta_i>0$ such that
\begin{align}\label{z3-increase}
&z_{3,s}(\xi)\ge C_2/2>0\quad\forall\xi_i\le\xi<\xi_i+\delta_i\notag\\
\Rightarrow\quad&z_3(\xi_i)<z_3(\xi)\quad\forall \xi_i<\xi<\xi_i+\delta_i.
\end{align}

If $z_3(\xi_i)=0$, \eqref{z3-positive-local} follows immediately from \eqref{z3-increase}. Hence we may assume without loss of generality that $z_3(\xi_i)<0$. We then divide the proof of the claim \eqref{z3-positive-local} into three cases.

\noindent $\underline{\text{\bf Case 2a}}$: There exists $\xi_i''\in (\xi_i,a_i)$ such that $z_3(\xi_i)\ge z_1(\xi_i'')$.

\noindent Let $a_i''=\sup\{a_0>\xi_i:z_3(\xi)>z_3(\xi_i)\quad\forall \xi_i<\xi<a_0\}$.
By \eqref{z3-increase}, $\xi_i+\delta_i\le a_i''\le \xi_i''$ and
\begin{equation}\label{z3-identity}
z_3(\xi)>z_3(\xi_i)\quad\forall\xi_i<\xi<a_i''\quad\mbox{ and }\quad z_3(a_i'')=z_3(\xi_i).
\end{equation}
Hence $z_{3,s}(a_i'')\le 0$. On the other hand by \eqref{z3-eqn}, \eqref{tilde-w3-upper-lower-bd}, \eqref{ineqn1}, \eqref{z-xi-negative} and \eqref{z3-identity} we have
\begin{align*}
C_2=&z_{3,s}(a_i'')+\left(n-2-\frac{2m\alpha}{\beta}+mz_3(a_i'')+\beta e^{-\frac{\rho_1}{\beta}a_i''}\widetilde{w}_3^{1-m}(a_i'')\right)z_3(a_i'')\notag\\
\le &z_{3,s}(a_i'')+\left(n-2-\frac{2m\alpha}{\beta}+mz_3(\xi_i)+\beta e^{-\frac{\rho_1}{\beta}a_i''}(\eta/2)^{1-m}\right)z_3(\xi_i)\notag\\
\le&z_{3,s}(a_i'')
\end{align*}
and contradiction arises. Hence no such $\xi_i''$ exists.

\noindent $\underline{\text{\bf Case 2b}}$: $z_3(\xi_i)<z_3(\xi)\le 0\quad\forall \xi_i<\xi<a_i$.

\noindent Then by \eqref{z3-eqn}, \eqref{tilde-w3-upper-lower-bd} and \eqref{ineqn1} we have
\begin{align}\label{z3s-positive2}
C_2=&z_{3,s}(\xi)+\left(n-2-\frac{2m\alpha}{\beta}+mz_3(\xi)+\beta e^{-\frac{\rho_1}{\beta}\xi}\widetilde{w}_1^{1-m}(\xi)\right)z_3(\xi)\notag\\
\le &z_{3,s}(\xi)+\left(n-2-\frac{2m\alpha}{\beta}+mz_3(\xi_i)+\beta e^{-\frac{\rho_1}{\beta}\xi}(\eta/2)^{1-m}\right)z_3(\xi)\notag\\
\le&z_{3,s}(\xi)\quad \forall \xi_i<\xi<a_i.
\end{align}
By \eqref{z3s-positive2},
\begin{equation}\label{z3-z3-positive}
z_3(a_i)>z_3(\xi_i)+C_2(a_i-\xi_i)=z_3(\xi_i)+2(1+|z_3(\xi_i)|)=2+|z_3(\xi_i)|>0.
\end{equation}
By \eqref{z3-z3-positive} and the intermediate value theorem, there exists $a_i'\in (\xi_i,a_i)$ such that \eqref{z3-positive-local} holds. 

\noindent $\underline{\text{\bf Case 2c}}$: $z_3(\xi_i)<z_3(\xi)\quad\forall \xi_i<\xi<a_i$ and there exists $a_i'\in (\xi_i,a_i)$ such that $z_3(a_i')>0$.

\noindent In this case \eqref{z3-positive-local} holds.

Hence the claim \eqref{z3-positive-local} holds. 
By \eqref{z3-positive-local} and an argument similar to the proof of \eqref{z-positive3} in the proof of case 1 we have
\begin{equation}\label{z3-positive10}
z_3(s)>0\quad\forall s>a_j',j\in\mathbb{Z}^+.
\end{equation}
Note that
\begin{equation*}
0<a_j-\xi_j\le\frac{2}{C_2}\left(1+\max_{j\in\mathbb{Z}^+}|z_1(\xi_j)|\right)<\infty\quad\forall j\in\mathbb{Z}^+.
\end{equation*}
Then we have
\begin{equation}\label{aj'-limit}
\lim_{j\to\infty}a_j'=\lim_{j\to\infty}a_j=\lim_{j\to\infty}\xi_j=-\infty.
\end{equation}
Hence by \eqref{z3-positive10} and \eqref{aj'-limit} we have
\begin{equation}\label{z3-positive11}
z_3(s)>0\quad\forall s\in\mathbb{R}.
\end{equation}
Then by \eqref{z3-w3-w3r-relation}, \eqref{w3-limit3} and \eqref{z3-positive11} we have
\begin{equation}\label{tilde-w3-lower-bd}
\widetilde{w}_3(s)\ge\eta\quad\forall s\in\mathbb{R}.
\end{equation}
Let
\begin{equation}\label{a-defn2}
a(\widetilde{w}_3,s)=-\beta\int_s^0e^{-\frac{\rho_1}{\beta}\rho'}\widetilde{w}_3^{1-m}(\rho')\,d\rho'\quad\forall s\in\mathbb{R}.
\end{equation}
Then by \eqref{tilde-w3-lower-bd} and an argument similar to the proof of \eqref{a-ineqn} we have
\begin{equation}\label{a-ineqn2}
a(\widetilde{w}_3,\rho)-a(\widetilde{w}_3,s)\le\beta \eta^{1-m}e^{-\frac{\rho_1}{\beta}s}(\rho-s)\quad\forall \rho<s.
\end{equation}
Multiplying \eqref{z3-eqn2} by $e^{a(\widetilde{w}_3,s)}$ and integrating over $(\xi_i,s)$, we get
\begin{equation*}
e^{a(\widetilde{w}_3,s)}z_3(s)=e^{a(\widetilde{w}_3,\xi_i)}z_3(\xi_i)+C_3\int_{\xi_i}^se^{a(\widetilde{w}_3,\rho)}\,d\rho -m\int_{\xi_i}^se^{a(\widetilde{w}_3,\rho)}(z_3(\rho)+C_1))^2\,d\rho\quad\forall s\in\mathbb{R}.
\end{equation*}
Hence $z_3$ satisfies
\begin{equation}\label{z3-representation-formula0}
z_3(s)=e^{a(\widetilde{w}_3,\xi_i)-a(\widetilde{w}_3,s)}z_3(\xi_i)+C_3\int_{\xi_i}^se^{a(\widetilde{w}_3,\rho)-a(\widetilde{w}_3,s)}\,d\rho -m\int_{\xi_i}^se^{a(\widetilde{w}_3,\rho)-a(\widetilde{w}_3,s)}(z_3(\rho)+C_1))^2\,d\rho\quad\forall s\in\mathbb{R}.
\end{equation}
Letting $i\to\infty$ in \eqref{z3-representation-formula0}, by \eqref{z3-limit} and \eqref{a-ineqn2},
\begin{align}\label{z3-representation-formula2}
z_3(s)=&C_3\int_{-\infty}^se^{a(\widetilde{w}_3,\rho)-a(\widetilde{w}_3,s)}\,d\rho -m\int_{-\infty}^se^{a(\widetilde{w}_3,\rho)-a(\widetilde{w}_3,s)}(z_3(\rho)+C_1))^2\,d\rho\quad\forall s\in\mathbb{R}
\notag\\
\le&C_3\int_{-\infty}^se^{a(\widetilde{w}_3,\rho)-a(\widetilde{w}_3,s)}\,d\rho\quad\forall s\in\mathbb{R}.
\end{align}
By \eqref{z3-positive11}, \eqref{a-ineqn2} and \eqref{z3-representation-formula2} and an argument similar to the proof of \eqref{phi2-upper-bd8} we get that $z_3$ satisfies \eqref{z3-growth-rate-bd}
in $\mathbb{R}$. Hence by Theorem \ref{z-existence-thm}, $z=z_3$ and $w=w_3$. Thus $f=f_3$ and the theorem follows.

{\hfill$\square$\vspace{6pt}}

By the proof of Theorem \ref{blow-up-self-similar-soln-thm} we have the following result.

\begin{cor}\label{blow-up-self-similar-soln-decay-rate-cor}
Let $n\ge 3$, $0<m<\frac{n-2}{n}$, $\rho_1>0$, $\eta>0$, $\beta>\frac{m\rho_1}{n-2-nm}$ and $\alpha$ satisfies \eqref{alpha-beta-relation2}. Let $f\in C^2(0,\infty)$ be the  unique   solution of \eqref{f-ode}  given by Theorem \ref{blow-up-self-similar-soln-thm} which satisfies \eqref{blow-up-rate-x=0}  where $s=\log r$, and $w$, $z$,  $\widetilde{w}$, are given by \eqref{w-defn}, \eqref{z-defn} and \eqref{tilde-w-defn}. Let $z$ be given by \eqref{z-defn}. Then $z$ satisifies \eqref{z-growth-rate-bd} in $\mathbb{R}$.
\end{cor}

\begin{cor}\label{f-upper-lower-bds-cor}(cf. Lemma 2.1 of \cite{HuK3})
Let $n\ge 3$, $0<m<\frac{n-2}{n}$, $\rho_1>0$, $\eta>0$, $\beta>\frac{m\rho_1}{n-2-nm}$ and $\alpha$  satisfies \eqref{alpha-beta-relation2}. 
Let $f\in C^2(0,\infty)$ be the  unique   solution of \eqref{f-ode}  given by Theorem \ref{blow-up-self-similar-soln-thm} which satisfies \eqref{blow-up-rate-x=0} where $s=\log r$, and $w$, $z$,  $\widetilde{w}$, are given by \eqref{w-defn}, \eqref{z-defn} and \eqref{tilde-w-defn}. Then
\begin{equation}\label{f-fr-ineqn}
f+\frac{\beta}{\alpha}rf_r>0\quad\forall r>0
\end{equation}
and
\begin{equation}\label{f-upper-lower-bds}
\eta r^{-\alpha/\beta}< f(r)\le \eta r^{-\alpha/\beta}\mbox{exp}\,\left(\frac{C_3\eta^{m-1}}{\rho_1}r^{\rho_1/\beta}\right)\quad\forall r>0.
\end{equation}
where $C_3>0$ is given by \eqref{c123-defn}. 
\end{cor}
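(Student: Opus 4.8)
The plan is to read both assertions directly off the representation of the solution provided by Theorem \ref{z-existence-thm}, using only the elementary relation between $f$, $w$ and $z$. First I would record that, since $w(r)=r^{\alp/\be}f(r)$ and $z(s)=rw_r(r)/w(r)$ with $s=\log r$, applying $\frac{d}{d\log r}$ to $\log w=\frac{\alp}{\be}\log r+\log f$ gives
\begin{equation*}
z(\log r)=\frac{\alp}{\be}+\frac{rf_r(r)}{f(r)}\quad\forall r>0,
\end{equation*}
so that
\begin{equation*}
f(r)+\frac{\be}{\alp}\,rf_r(r)=\frac{\be}{\alp}\,f(r)\,z(\log r)\quad\forall r>0.
\end{equation*}
Since $\alp,\be>0$, $f>0$ and $z(s)>0$ for every $s\in\Rr$ by \eqref{z-growth-rate-bd}, this immediately yields \eqref{f-fr-ineqn}.

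For \eqref{f-upper-lower-bds} I would start from \eqref{tilde-w-formula}, $\widetilde w(s)=\eta\,\mbox{exp}\left(\int_{-\infty}^s z(\rho)\,d\rho\right)$, which by Theorem \ref{z-existence-thm} holds on all of $\Rr$. The integral converges because \eqref{z-growth-rate-bd} together with $C_4=C_3/\be$ gives $0<z(\rho)\le C_4\eta^{m-1}e^{\rho_1\rho/\be}$, and integrating this bound over $(-\infty,s)$ yields
\begin{equation*}
0<\int_{-\infty}^s z(\rho)\,d\rho\le\frac{C_4\be\eta^{m-1}}{\rho_1}\,e^{\rho_1 s/\be}=\frac{C_3\eta^{m-1}}{\rho_1}\,e^{\rho_1 s/\be},
\end{equation*}
the strict left-hand inequality coming from $z>0$ on $\Rr$. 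Exponentiating and recalling $w(r)=\widetilde w(\log r)=r^{\alp/\be}f(r)$ and $e^{\rho_1 s/\be}=r^{\rho_1/\be}$, I would obtain
\begin{equation*}
\eta<w(r)\le\eta\,\mbox{exp}\left(\frac{C_3\eta^{m-1}}{\rho_1}\,r^{\rho_1/\be}\right)\quad\forall r>0,
\end{equation*}
and dividing through by $r^{\alp/\be}$ gives \eqref{f-upper-lower-bds}.

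Every ingredient needed here, namely the identity for $z$ in terms of $f$, the strict positivity and exponential upper bound for $z$, and the exact product formula \eqref{tilde-w-formula} for $\widetilde w$, is already furnished by Theorem \ref{blow-up-self-similar-soln-thm} and Theorem \ref{z-existence-thm}, so I expect no genuine difficulty. The only points requiring a bit of care are verifying that $\int_{-\infty}^s z(\rho)\,d\rho$ converges, which follows from the exponential decay of the bound on $z$ as $s\to-\infty$, and noting that this integral is \emph{strictly} positive, which is precisely what makes the lower bounds in \eqref{f-fr-ineqn} and \eqref{f-upper-lower-bds} strict rather than merely non-strict.
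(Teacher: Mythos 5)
Your proposal is correct and follows essentially the same route as the paper: read \eqref{f-fr-ineqn} off the identity $z(\log r)=\frac{\alpha}{\beta}+\frac{rf_r}{f}$ together with strict positivity of $z$, then integrate the two-sided bound $0<z(s)\le C_4\eta^{m-1}e^{\rho_1 s/\beta}$ from $-\infty$ to $\log r$ to obtain \eqref{f-upper-lower-bds}. The paper phrases the second step as $0<(\log w)_r\le C_3\beta^{-1}\eta^{m-1}r^{\rho_1/\beta-1}$ and integrates in $r$, while you invoke the product formula \eqref{tilde-w-formula} and integrate in $s$, but these are the same computation.
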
 
\begin{proof}
\eqref{f-fr-ineqn} and the left hand side of \eqref{f-upper-lower-bds} is proved in Lemma 2.1 of \cite{HuK3}. For the sake of completeness we will give a simple proof of \eqref{f-fr-ineqn} and  \eqref{f-upper-lower-bds} here. By Corollary \ref{blow-up-self-similar-soln-decay-rate-cor} \eqref{z-growth-rate-bd} holds in $\mathbb{R}$.
Substituting \eqref{w-defn} into \eqref{z-defn} and using \eqref{z-growth-rate-bd}, we get \eqref{f-fr-ineqn}. By \eqref{blow-up-rate-x=0}, \eqref{z-growth-rate-bd}, \eqref{c4-defn} and \eqref{z-w-wr-relation},  
\begin{align*}
&0<(\log w)_r\le C_3\beta^{-1}\eta^{m-1}r^{\frac{\rho_1}{\beta}-1}\quad\forall r>0\notag\\
\Rightarrow\quad&\eta<r^{\alpha/\beta}f(r)=w(r)\le\eta \,\mbox{exp}\,\left(\frac{C_3\eta^{m-1}}{\rho_1}r^{\rho_1/\beta}\right)\quad\forall r>0
\end{align*}
and \eqref{f-upper-lower-bds} follows.
\end{proof}

\begin{cor}\label{fr-negative2-cor}(Corollary 2.4 of \cite{HuK3})
Let $n\ge 3$, $0<m<\frac{n-2}{n}$, $\rho_1>0$, $\eta>0$, $\beta>\frac{m\rho_1}{n-2-nm}$ and $\alpha$  satisfies \eqref{alpha-beta-relation2}. 
Let $f\in C^2(0,\infty)$ be the  unique   solution of \eqref{f-ode} given by Theorem \ref{blow-up-self-similar-soln-thm} which satisfies \eqref{blow-up-rate-x=0}  where $s=\log r$, and $w$, $z$,  $\widetilde{w}$, are given by \eqref{w-defn}, \eqref{z-defn} and \eqref{tilde-w-defn}. Then
\begin{equation}\label{fr-negative2}
f_r<0\quad\forall r>0.
\end{equation}
\end{cor}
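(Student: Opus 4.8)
The plan is to convert the claim $f_r<0$ into an inequality for the auxiliary function $z$ and then prove that inequality by a first-touching (maximum-principle) argument applied to the ODE \eqref{z-eqn}.

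First I would record the relation between $f_r$ and $z$. Differentiating $f(r)=r^{-\alpha/\beta}w(r)$ and using $rw_r=zw$ from \eqref{z-w-wr-relation} gives
\begin{equation*}
f_r(r)=r^{-\alpha/\beta-1}w(r)\left(z(\log r)-\frac{\alpha}{\beta}\right)\qquad\forall r>0 .
\end{equation*}
Since $w\ge\eta>0$ by Corollary \ref{f-upper-lower-bds-cor}, the inequality $f_r<0$ on $(0,\infty)$ is equivalent to $z(s)<\alpha/\beta$ for every $s\in\mathbb R$, so it suffices to prove the latter.

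To do so I would first note that \eqref{z-growth-rate-bd} gives $0<z(s)\le C_4\eta^{m-1}e^{\rho_1 s/\beta}$, so $z(s)\to 0$ as $s\to-\infty$, while $\alpha/\beta>0$ by \eqref{alpha-beta-relation2}; hence $z(s)<\alpha/\beta$ for all sufficiently negative $s$. Now suppose, for contradiction, that the closed set $\{s\in\mathbb R:z(s)\ge\alpha/\beta\}$ is nonempty. By the previous remark it is bounded below, so it has a finite infimum $s_0$, at which (by continuity) $z(s_0)=\alpha/\beta$ while $z(s)<\alpha/\beta$ for $s<s_0$, forcing $z_s(s_0)\ge 0$. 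On the other hand, substituting $z=\alpha/\beta$ into \eqref{z-eqn} and using the definitions \eqref{c123-defn} of $C_1,C_2,C_3$, the zeroth-order (reaction) terms cancel exactly,
\begin{equation*}
\frac{\alpha}{\beta}\left(n-2-\frac{m\alpha}{\beta}\right)-\left(n-2-\frac{2m\alpha}{\beta}\right)\frac{\alpha}{\beta}-m\frac{\alpha^2}{\beta^2}=0 ,
\end{equation*}
and what remains is $z_s(s_0)=-\beta e^{-\rho_1 s_0/\beta}\widetilde{w}(s_0)^{1-m}\cdot(\alpha/\beta)=-\alpha e^{-\rho_1 s_0/\beta}\widetilde{w}(s_0)^{1-m}<0$, a contradiction. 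Hence $z<\alpha/\beta$ throughout $\mathbb R$, and therefore $f_r<0$ on $(0,\infty)$.

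The only step that is not completely routine is the exact cancellation above — equivalently $C_3=m(C_1+\alpha/\beta)^2$ — which is precisely what makes the surviving transport term strictly negative at the level $z=\alpha/\beta$; this is a short computation straight from \eqref{c123-defn}, so I do not expect a real obstacle. (An alternative would be to run the same first-touching argument directly on $rf_r$, or on $f+\tfrac{\beta}{\alpha}rf_r$, but working with $z$ and \eqref{z-eqn} is the cleanest, since \eqref{z-growth-rate-bd} already supplies the behaviour at $-\infty$.)
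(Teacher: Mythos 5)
Your proof is correct, and it takes a genuinely different route from the paper. The paper first observes from \eqref{f-fr-ineqn} and \eqref{elliptic-eqn} that $(r^{n-1}(f^m)_r)_r<0$, then produces a sequence $r_i\to 0^+$ along which $r_if_r(r_i)/f(r_i)\to-\alpha/\beta<0$ (using the same mean-value construction as in the proof of Theorem \ref{blow-up-self-similar-soln-thm2}), and finally integrates the monotonicity of $r^{n-1}(f^m)_r$ from $r_i$ forward to propagate $f_r<0$ to all $r>0$. You instead translate $f_r<0$ into the pointwise inequality $z<\alpha/\beta$ and prove it by a first-touching argument on \eqref{z-eqn}: the bound \eqref{z-growth-rate-bd}, which is already in the hypotheses of the corollary, gives $z(s)\to 0$ as $s\to-\infty$, so a first touching point $s_0$ with $z(s_0)=\alpha/\beta$ would have $z_s(s_0)\ge 0$; but the algebraic identity $C_3=m(C_1+\alpha/\beta)^2$ (equivalently, $C_1+\alpha/\beta=(n-2)/(2m)$, so the reaction terms of \eqref{z-eqn2} cancel at $z=\alpha/\beta$) forces $z_s(s_0)=-\alpha e^{-\rho_1 s_0/\beta}\widetilde{w}(s_0)^{1-m}<0$, a contradiction. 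Your computation of the cancellation checks out, and the only input you need beyond \eqref{z-growth-rate-bd} is positivity of $w$, which is automatic from $f>0$ (the appeal to Corollary \ref{f-upper-lower-bds-cor} is not actually needed). Your argument is somewhat more self-contained: it avoids re-invoking the sequence $\{\xi_i\}$ from the proof of Theorem \ref{blow-up-self-similar-soln-thm2} and works entirely at the level of the $z$-ODE, at the price of the (short) algebraic identity. The paper's integral argument, on the other hand, uses only the differential inequality for $r^{n-1}(f^m)_r$ and therefore does not depend on the special algebraic structure of the coefficients.
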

\begin{proof}
\eqref{fr-negative2} is prove in Corollary 2.4 of \cite{HuK3} by using the asymptotic expansion of $f$ near the origin. Here we will give a simple different proof of \eqref{fr-negative2}.
By \eqref{elliptic-eqn} and \eqref{f-fr-ineqn},
\begin{equation}\label{frr-ineqn}
(r^{n-1}(f^m)_r)_r=-(\alpha f+\beta rf_r)<0\quad\forall r>0.
\end{equation}
By the proof of Theorem \ref{blow-up-self-similar-soln-thm} there exists a sequence $\{\xi_i\}_{I=1}^{\infty}$, $\xi_i\to -\infty$ as $i\to\infty$, such that 
\begin{equation}\label{z-limit-at-neg-infty}
\lim_{i\to\infty}z(\xi_i)=0.
\end{equation} 
Let $r_i=e^{\xi_i}$ for any $i\in\mathbb{Z}^+$. By \eqref{w-defn}, \eqref{z-defn} and \eqref{z-limit-at-neg-infty},
\begin{align}\label{rfr-negative-near-0}
&\lim_{i\to\infty}\left(\frac{\alpha}{\beta}+\frac{r_if_r(r_i)}{f(r_i)}\right)=\lim_{i\to\infty}\frac{r_iw_r(r_i)}{w(r_i)}=\lim_{i\to\infty}z(\xi_i)=0\notag\\
\Rightarrow\quad&\lim_{i\to\infty}\frac{r_if_r(r_i)}{f(r_i)}=-\frac{\alpha}{\beta}<0.
\end{align}
By \eqref{blow-up-rate-x=0} and \eqref{rfr-negative-near-0} there exists a constant $i_0\in\mathbb{Z}^+$ such that
\begin{equation}\label{rfr-negative2}
f_r(r_i)\le -\frac{\alpha}{2\beta}<0\quad\forall i\ge i_0.
\end{equation}
Integrating \eqref{frr-ineqn} over $(r_i,r)$, by \eqref{rfr-negative2}, 
\begin{align*}
&mr^{n-1}f^{m-1}(r)f_r(r)<mr_i^{n-1}f^{m-1}(r_i)f_r(r_i)<0\quad\forall r>r_i, i\ge i_0\\
\Rightarrow\quad&f_r(r)<0\quad\forall r>0\quad\mbox{ as }i\to\infty
\end{align*}
and \eqref{fr-negative2} follows.

\end{proof}

\begin{prop}\label{f-upper-bd-lem}(cf. Lemma 4.1 of \cite{HuK3})
Let $n\ge 3$, $0<m<\frac{n-2}{n+2}$, $\rho_1>0$, $\eta>0$, 
\begin{equation}\label{beta-lower-bd3}
\beta>\max\left(\frac{m\rho_1}{n-2-nm},\frac{2m\rho_1}{n-2-(n+2)m}\right)
\end{equation}
and $\alpha$  satisfies \eqref{alpha-beta-relation2}. Let $C_2$ be given by \eqref{c123-defn}.
Let $f\in C^2(0,\infty)$ be the  unique  solution of \eqref{f-ode} given by Theorem \ref{blow-up-self-similar-soln-thm} which satisfies \eqref{blow-up-rate-x=0}  where $s=\log r$, and $w$, $z$,  $\widetilde{w}$, are given by \eqref{w-defn}, \eqref{z-defn} and \eqref{tilde-w-defn}. Then
\begin{equation}\label{z-upper-bd2}
0<z(s)\le\frac{C_2\eta^{m-1}}{\beta}e^{\frac{\rho_1s}{\beta}}\quad\forall s\in\mathbb{R}
\end{equation}
and
\begin{equation}\label{f-upper-bd11}
f(r)\le \eta r^{-\alpha/\beta}\mbox{exp}\,\left(\frac{C_2\eta^{m-1}}{\rho_1}r^{\rho_1/\beta}\right)\quad\forall r>0.
\end{equation}
\end{prop}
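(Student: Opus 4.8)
The plan is to extract the sharper constant $C_2$ in place of $C_3=C_2+mC_1^2$ (recall \eqref{c123-defn}) by absorbing the constant lower-order coefficient of \eqref{z-eqn} into an integrating factor, so that the ``loss'' $mC_1^2$ built into \eqref{z-eqn2} never enters. Since $2mC_1=n-2-\frac{2m\alpha}{\beta}$, equation \eqref{z-eqn} may be rewritten as
\[
z_s+\Bigl(2mC_1+\beta e^{-\frac{\rho_1}{\beta}s}\widetilde w^{1-m}\Bigr)z=C_2-mz^2 .
\]
Setting $b(s):=2mC_1\,s+a(\widetilde w,s)$ with $a(\widetilde w,s)=-\beta\int_s^0 e^{-\frac{\rho_1}{\beta}\rho'}\widetilde w^{1-m}(\rho')\,d\rho'$ as in \eqref{a-defn2}, we get $b'(s)=2mC_1+\beta e^{-\frac{\rho_1}{\beta}s}\widetilde w^{1-m}(s)$, hence
\[
\bigl(e^{b(s)}z(s)\bigr)_s=e^{b(s)}\bigl(C_2-mz(s)^2\bigr)\le C_2\,e^{b(s)}\qquad\forall s\in\mathbb{R}.
\]

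Before integrating this from $-\infty$ to $s$ I collect the facts that make it work, all already available in the section. By \eqref{z-growth-rate-bd}, $z>0$ on $\mathbb{R}$ and $z(s)\to0$ as $s\to-\infty$; by Corollary \ref{f-upper-lower-bds-cor}, $\widetilde w\ge\eta$ on $\mathbb{R}$, which forces $a(\widetilde w,s)\to-\infty$ (hence also $b(s)\to-\infty$) as $s\to-\infty$ and makes $e^{b}$ integrable near $-\infty$, so that $e^{b(s)}z(s)\to0$ as $s\to-\infty$. The one new input, and the only place the hypotheses $m<\frac{n-2}{n+2}$ and \eqref{beta-lower-bd3} are used, is the sign: using $\alpha=\frac{2\beta+\rho_1}{1-m}$ one computes
\[
2mC_1=n-2-\frac{2m\alpha}{\beta}=\frac{1}{1-m}\Bigl(n-2-(n+2)m-\frac{2m\rho_1}{\beta}\Bigr)\ge 0 ,
\]
i.e. $C_1\ge0$, under \eqref{beta-lower-bd3}. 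Integrating the displayed differential inequality then gives
\[
z(s)\le C_2\int_{-\infty}^s e^{b(\rho)-b(s)}\,d\rho=C_2\int_{-\infty}^s e^{2mC_1(\rho-s)}\,e^{a(\widetilde w,\rho)-a(\widetilde w,s)}\,d\rho ,
\]
and since $C_1\ge0$ and $\rho<s$ give $e^{2mC_1(\rho-s)}\le1$, while $\widetilde w\ge\eta$ gives $a(\widetilde w,\rho)-a(\widetilde w,s)\le\beta\eta^{1-m}e^{-\frac{\rho_1}{\beta}s}(\rho-s)$ for $\rho<s$ exactly as in \eqref{a-ineqn2}, we obtain $z(s)\le C_2\int_{-\infty}^s e^{\beta\eta^{1-m}e^{-\rho_1 s/\beta}(\rho-s)}\,d\rho=\frac{C_2\eta^{m-1}}{\beta}e^{\rho_1 s/\beta}$, which is \eqref{z-upper-bd2}; the strict positivity there is the $z>0$ already recorded.

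Finally \eqref{f-upper-bd11} follows from \eqref{z-upper-bd2} exactly as in Corollary \ref{f-upper-lower-bds-cor}: with $s=\log r$ and $z(s)=rw_r/w$ one gets $(\log w)_r\le\frac{C_2\eta^{m-1}}{\beta}r^{\frac{\rho_1}{\beta}-1}$, and integrating over $(0,r)$ using $w(0)=\eta$ gives $w(r)\le\eta\exp\bigl(\frac{C_2\eta^{m-1}}{\rho_1}r^{\rho_1/\beta}\bigr)$, whence \eqref{f-upper-bd11} since $f(r)=r^{-\alpha/\beta}w(r)$. Thus the main (essentially only) obstacle is the bookkeeping that verifies $C_1\ge0$ under \eqref{beta-lower-bd3}; once that is in hand, the argument is a streamlined variant of the estimates already carried out in Lemma \ref{z-local-existence-lem} and Theorem \ref{z-existence-thm}.
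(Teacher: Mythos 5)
Your proof is correct and takes essentially the same approach as the paper: the paper drops both nonnegative terms $(n-2-\tfrac{2m\alpha}{\beta})z$ and $mz^2$ from \eqref{z-eqn} directly (using $z>0$ and $C_1\ge 0$) and integrates with the integrating factor $e^{a(\widetilde w,s)}$, whereas you absorb the linear term into the integrating factor $e^{2mC_1 s+a(\widetilde w,s)}$, drop only $mz^2$, and then use $C_1\ge 0$ to discard the extra $e^{2mC_1(\rho-s)}\le 1$; both routes rest on the same three facts ($z>0$, $C_1\ge 0$, $\widetilde w\ge\eta$) and yield the identical bound. One remark: your algebra $2mC_1=\tfrac{1}{1-m}\bigl(n-2-(n+2)m-\tfrac{2m\rho_1}{\beta}\bigr)$ is correct, which means nonnegativity actually requires $\beta\ge\tfrac{2m\rho_1}{n-2-(n+2)m}$; the hypothesis \eqref{beta-lower-bd3} as printed (and the paper's own display \eqref{c1-positive}) omit the factor $\rho_1$, a slip you have inherited rather than introduced.
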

\begin{proof}
\eqref{f-upper-bd11} is proved in Lemma 4.1 of \cite{HuK3}. Here we will give a different simple proof  of this result. By \eqref{beta-lower-bd3}, 
\begin{equation}\label{c1-positive}
n-2-\frac{2m\alpha}{\beta}=\frac{n-2-(n+2)m}{\beta (1-m)}\left(\beta-\frac{2m\rho_1}{n-2-(n+2)m}\right)>0.
\end{equation}
By Corollary \ref{blow-up-self-similar-soln-decay-rate-cor} \eqref{z-growth-rate-bd} holds in $\mathbb{R}$. By  \eqref{z-eqn}, \eqref{z-growth-rate-bd} and \eqref{c1-positive},
\begin{equation}\label{z-ineqn6}
z_s+\beta e^{-\frac{\rho_1}{\beta}s}\widetilde{w}^{1-m}z\le\frac{\alpha}{\beta}\left(n-2-\frac{m\alpha}{\beta}\right)=C_2\quad\forall s\in\mathbb{R}.
\end{equation} 
Let $b_1>0$ and let $a(\widetilde{w},s)$ be given by \eqref{a-defn}. Then by multiplying \eqref{z-ineqn6} by $e^{a(\widetilde{w},s)}$ and integrating over $(-\infty,s)$,
\begin{equation}\label{z-representation-formula-upper-bd}
z(s)\le C_2\int_{-\infty}^se^{a(\widetilde{w},\rho)-a(\widetilde{w},s)}\,d\rho\quad\forall s\in\mathbb{R}.
\end{equation}
By  \eqref{blow-up-rate-x=0} and \eqref{z-growth-rate-bd},
\begin{equation}\label{tilde-w-lower-bd7}
\widetilde{w}(s)\ge\eta\quad\forall s\in\mathbb{R}.
\end{equation}
By \eqref{tilde-w-lower-bd7} and an argument similar to the proof of Lemma \ref{z-local-existence-lem}, we get \eqref{a-ineqn}.
By \eqref{z-defn}, \eqref{a-ineqn} and \eqref{z-representation-formula-upper-bd} we have
\begin{align}\label{z-upper-bd12}
r(\log w)_r=&z(s)\le C_2\int_{-\infty}^s\mbox{exp}\,\left(\beta\eta^{1-m}e^{-\frac{\rho_1}{\beta}s}(\rho-s)\right)\,d\rho=\frac{C_2\eta^{m-1}}{\beta}e^{\frac{\rho_1s}{\beta}}=\frac{C_2\eta^{m-1}}{\beta}r^{\frac{\rho_1}{\beta}}\quad\forall s\in\mathbb{R}\notag\\
\Rightarrow\quad(\log w)_r\le &\frac{C_2\eta^{m-1}}{\beta}r^{\frac{\rho_1}{\beta}-1}\quad\forall r>0
\end{align}
and \eqref{z-upper-bd2} follows. Integrating \eqref{z-upper-bd12} with respect to $r$ over $(0,r)$, by \eqref{blow-up-rate-x=0} we get \eqref{f-upper-bd11} and the proposition follows.

\end{proof}

\begin{prop}\label{f-lambda-representation-formula-prop}(cf. Corollary 2.6 of \cite{Hu3})
Let $n\ge 3$, $0<m<\frac{n-2}{n}$, $\rho_1>0$,   $\beta>\frac{m\rho_1}{n-2-nm}$ and $\alpha$  satisfies \eqref{alpha-beta-relation2}. Let $f_1\in C^2(0,\infty)$ be the  unique  solution of \eqref{f-ode} which satisfies \eqref{blow-up-rate-x=0} with $\eta=1$ given by Theorem \ref{blow-up-self-similar-soln-thm}. For any $\lambda>0$, let $f_{\lambda}\in C^2(0,\infty)$ be the  unique  solution of \eqref{f-ode}  which satisfies 
\begin{equation*}
\lim_{\rho\to 0}\rho^{\alpha/\beta}f_{\lambda}(\rho)=\lambda^{-\frac{\rho_1}{\beta(1-m)}}
\end{equation*}
given by Theorem \ref{blow-up-self-similar-soln-thm}. Then
\begin{equation}\label{f-lambda-representation-formula}
f_{\lambda}(r)=\lambda^{\frac{2}{1-m}}f_1(\lambda r)\quad\forall r>0.
\end{equation} 
\end{prop}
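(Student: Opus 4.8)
The plan is to exploit the scaling invariance of equation \eqref{elliptic-eqn} together with the uniqueness assertion of Theorem \ref{blow-up-self-similar-soln-thm2}. Fix $\lambda>0$ and set
\[
\tilde f(r):=\lambda^{\frac{2}{1-m}}f_1(\lambda r),\qquad r>0,
\]
regarded as a radially symmetric function $\tilde f(x)=\lambda^{2/(1-m)}f_1(\lambda x)$ on $\mathbb R^n\setminus\{0\}$; it is clearly positive there. Everything will follow once I show that $\tilde f$ is a radially symmetric solution of \eqref{elliptic-eqn} whose blow-up rate at the origin is $\lim_{\rho\to 0}\rho^{\alpha/\beta}\tilde f(\rho)=\lambda^{-\rho_1/(\beta(1-m))}$, for then Theorem \ref{blow-up-self-similar-soln-thm2} identifies $\tilde f$ with $f_\lambda$.

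First I would verify that $\tilde f$ solves \eqref{elliptic-eqn}. Writing $A=\lambda^{2/(1-m)}$, the chain rule gives $\Delta(\tilde f^m/m)(x)=A^m\lambda^2\,(\Delta(f_1^m/m))(\lambda x)$, $\alpha\tilde f(x)=A\,\alpha f_1(\lambda x)$ and $\beta x\cdot\nabla\tilde f(x)=A\,\beta(\lambda x)\cdot(\nabla f_1)(\lambda x)$. The exponent $2/(1-m)$ was chosen precisely so that $A^m\lambda^2=A^m A^{1-m}=A$; hence summing the three terms and using that $f_1$ satisfies \eqref{elliptic-eqn} at the point $\lambda x$ shows that $\tilde f$ satisfies \eqref{elliptic-eqn} on $\mathbb R^n\setminus\{0\}$.

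Next I would compute the behaviour at the origin. The substitution $\rho=\lambda r$ together with $\lim_{\rho\to 0}\rho^{\alpha/\beta}f_1(\rho)=1$ (the defining property of $f_1$, which corresponds to $\eta=1$) yields
\[
\lim_{r\to 0}r^{\alpha/\beta}\tilde f(r)=\lambda^{\frac{2}{1-m}-\frac{\alpha}{\beta}}.
\]
Then I would invoke \eqref{alpha-beta-relation2}, which gives $\alpha/\beta=\frac{2}{1-m}+\frac{\rho_1}{\beta(1-m)}$, so the exponent above equals $-\rho_1/(\beta(1-m))$, exactly the prescribed rate for $f_\lambda$.

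Finally, since the hypotheses on $n$, $m$, $\rho_1$, $\beta$ and $\alpha$ in Theorem \ref{blow-up-self-similar-soln-thm2} involve only the relation \eqref{alpha-beta-relation2} and the bound $\beta>\frac{m\rho_1}{n-2-nm}$ — neither of which is affected by the scaling parameter $\lambda$ (which only changes the value of $\eta$) — Theorem \ref{blow-up-self-similar-soln-thm2} applies and forces $\tilde f=f_\lambda$, which is \eqref{f-lambda-representation-formula}. I do not anticipate any genuine obstacle: the argument is a routine scaling computation, and the only point requiring a moment's care is checking the arithmetic identity $\frac{2}{1-m}-\frac{\alpha}{\beta}=-\frac{\rho_1}{\beta(1-m)}$ against \eqref{alpha-beta-relation2}, which fixes the correct boundary exponent and hence the correct normalization of $f_\lambda$.
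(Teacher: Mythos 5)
Your proof is correct and follows exactly the same strategy as the paper's: define the rescaled function $\lambda^{2/(1-m)}f_1(\lambda r)$, check it solves \eqref{elliptic-eqn} and has the prescribed blow-up rate at the origin, then conclude by the uniqueness statement of Theorem \ref{blow-up-self-similar-soln-thm2}. You merely spell out the chain-rule verification and the exponent arithmetic in a bit more detail than the paper does.
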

\begin{proof}
When $\beta\ge\frac{\rho_1}{n-2-nm}$, \eqref{f-lambda-representation-formula} is proved in Corollary 2.6 of \cite{Hu3}. Here we will extend this result for any $\beta>\frac{m\rho_1}{n-2-nm}$. Let
\begin{equation*}
v(r)=\lambda^{\frac{2}{1-m}}f_1(\lambda r)\quad\forall r>0.
\end{equation*}
Then $v$ satisfies \eqref{elliptic-eqn} and
\begin{equation*}
\lim_{r\to 0}r^{\alpha/\beta}v(r)=\lambda^{-\frac{\rho_1}{\beta(1-m)}}\lim_{r\to 0}(\lambda r)^{\alpha/\beta}f_1(\lambda r)
=\lambda^{-\frac{\rho_1}{\beta(1-m)}}.
\end{equation*}
Hence by Theorem \ref{blow-up-self-similar-soln-thm},
\begin{equation*}
v(r)=f_{\lambda}(r)\quad\forall r>0
\end{equation*}  
and \eqref{f-lambda-representation-formula} follows.

\end{proof}

\section{Existence and uniqueness of the singular self-similar solution of the logarithmic diffusion equation}
\setcounter{equation}{0}
\setcounter{thm}{0}

In this section we will use an argument similar to the proof of Theorem \ref{blow-up-self-similar-soln-thm} to prove Theorem \ref{log-blow-up-self-similar-soln-thm}. We assume that $n\ge 3$, $\eta>0$, $\rho_1>0$, $\beta_0>0$ and $\alpha_0=2\beta_0+\rho_1$ for the rest of the section. Suppose $g\in C^2(0,\infty)$ is a solution  of \eqref{g-ode} which satisfies \eqref{g-blow-up-rate-x=0}. Let $q(r)$  be given by 
\begin{equation}\label{q-defn}
q(r)=r^{\alpha/\beta}g(r)\quad\forall r=e^s,s\in\mathbb{R}
\end{equation}
and $h$ be given by
\begin{equation}\label{h-defn}
s=\log r,\quad h(s)=rq_r(r)/q(r).
\end{equation}  
Then $q\in C^2(0,\infty)$ and $h\in C^1(\mathbb{R})$. By the computation in \cite{Hs1} we get that $q$ satisfies
\begin{equation}\label{q-eqn}
\left(\frac{q_r}{q}\right)_r+\frac{n-1}{r}\cdot\frac{q_r}{q}+\beta_0r^{-1-\frac{\rho_1}{\beta_0}}q_r=\frac{\alpha_0(n-2)}{\beta_0r^2}.
\end{equation}
By \eqref{q-eqn}, $h$ satisfies
\begin{equation}\label{h-eqn}
h_s+(n-2)h+\beta_0 e^{-\frac{\rho_1s}{\beta_0}}\widetilde{q}h=\frac{\alpha_0(n-2)}{\beta_0}\quad\mbox{ in }\mathbb{R} 
\end{equation} 
where
\begin{equation}\label{tilde-w1-defn}
\widetilde{q}(s)=q(e^s)\quad\forall s\in\mathbb{R}.
\end{equation}
For any $b_1>0$ and function $q_1\in L_{loc}^{\infty}(\mathbb{R})$, let 
\begin{equation}\label{a0-defn}
a_0(q_1,s)=-\beta_0\int_s^{-b_1}e^{-\frac{\rho_1\rho}{\beta_0}}q_1(\rho)\,d\rho\quad\forall s\in\mathbb{R}. 
\end{equation}
If we assume that
\begin{equation*}
h(s_1)e^{a_0(\widetilde{q},s_1)+(n-2)s_1}\to 0\quad\mbox{ as }s_1\to-\infty
\end{equation*}
and
\begin{equation*}
\int_{-\infty}^se^{a_0(\widetilde{q},\rho)+(n-2)\rho}\,d\rho<\infty\quad\forall s\in\mathbb{R}
\end{equation*}
where $\widetilde{q}$ is given by \eqref{tilde-w1-defn}, then by multiplying  \eqref{h-eqn} by $e^{a_0(\widetilde{q},s)+(n-2)s}$ and integrating over $(-\infty,s)$, we get
\begin{equation}\label{h-representation-formula}
h(s)=\frac{\alpha_0(n-2)}{\beta_0}\int_{-\infty}^se^{a_0(\widetilde{q},\rho)-a_0(\widetilde{q},s)+(n-2)(\rho-s)}\,d\rho\quad\forall s\in\mathbb{R}. 
\end{equation}
This suggests one to use fixed point argument to prove the existence of radially symmetric solution $g$ of \eqref{log-elliptic-eqn} which satisfies \eqref{g-blow-up-rate-x=0}.

We will prove Theorem \ref{log-blow-up-self-similar-soln-thm} in several steps. We will first use fixed point argument to prove the local existence and uniqueness of  solution $h$ of \eqref{h-eqn} 
with
\begin{equation}\label{tilde-q-formula} 
\widetilde{q}(s)=\eta\, \mbox{exp}\left(\int_{-\infty}^sh(\rho)\,d\rho\right)
\end{equation}
which satisfies some decay condition for $h$. Then by an argument similar to the proof of Theorem \ref{z-existence-thm} one can conclude that this local solution can be extended to a unique global solution $h$ of \eqref{h-eqn} with $\widetilde{q}$ given by \eqref{tilde-q-formula} under the condition that $h$ satisfies some decay condition. We then remove the decay requirement on $q$ in the uniqueness of solution result. From these results on $h$ we get corresponding result for $g$ in Theorem \ref{log-blow-up-self-similar-soln-thm}.

\begin{lem}\label{h-local-existence-lem}
Let $n\ge 3$, $\eta>0$, $\rho_1>0$, $\beta_0>0$  and $\alpha_0=2\beta_0+\rho_1$.  Then there exists a constant $b_1>0$ such that the equation 
\eqref{h-eqn} has a unique  solution $h\in C^1(-\infty,-b_1)$   in $(-\infty,-b_1)$ that satisfies 
\begin{equation}\label{h-growth-rate-bd}
0<h(s)e^{-\rho_1s/\beta_0}\le C_0/\eta
\end{equation}
 in $(-\infty, -b_1)$ with $\widetilde{q}$ given by \eqref{tilde-q-formula} 
in $(-\infty, -b_1)$  where 
\begin{equation}\label{c0-defn}
C_0=\frac{\alpha_0(n-2)}{\beta_0^2}.
\end{equation}
\end{lem} 
\begin{proof}
We will prove this result by using fixed point argument. Let $b_1>0$. We define the Banach space 
\begin{equation*}
\cX_{b_1}'=\left\{(\widetilde{q},h): \widetilde{q},h\in C((-\infty,-b_1); \mathbb{R})\,\,\mbox{ such that }\,\,\|(\widetilde{q},h)\|_{\cX_{b_1}'}<\infty\right\}
\end{equation*}
with norm 
\begin{equation*}
\|(\widetilde{q},h)\|_{\cX_{b_1}'}=\max\left\{\| \widetilde{q}\|_{L^{\infty}\left((-\infty,-b_1);e^{-\frac{\rho_1s}{4\beta_0}}\right)},\|h\|_{L^{\infty}\left((-\infty,-b_1);e^{-\frac{\rho_1s}{2\beta_0}}\right)}\right\}.
\end{equation*}
Note that by an argument similar to the proof of \eqref{exp-term-bd} there exists a constant $C_0'>0$ such that
\begin{equation}\label{exp-term-bd2}
\left|\frac{\mbox{exp}\,\left(\frac{C_0\beta_0 a}{\rho_1\eta}\right)-1}{a\eta^{-1}}\right|\le C_0'\quad\forall 0<|a|\le 1.
\end{equation}
Let 
$$
\cD_{b_1}':=\left\{ (\widetilde{q},h)\in \cX_{b_1}:  \|(\widetilde{q},h)-(\eta,0)\|_{\cX_{b_1}'}\le\3_2\mbox{ and }\widetilde{q}(s)\ge\eta, 0\le h(s)e^{-\frac{\rho_1s}{\beta_0}}\le C_0\eta^{-1}\quad\forall s<-b_1 \right\}
$$
where $C_0$ is given by \eqref{c0-defn}
and 
\begin{equation*}\label{epsilon2-defn}
\3_2:=\frac{1}{2}\min(1,\eta)>0.
\end{equation*}
We now choose
\begin{equation}\label{b1-range2}
b_1>b_0':=\max \left(\frac{\beta_0}{\rho_1}\log\left(\frac{C_0'}{\3_2}\right),\frac{2\beta_0}{\rho_1}\log\left(\frac{C_0}{\3_2\eta}\right),\frac{\beta_0}{\rho_1}\left|\log\left(\frac{4\beta_0^2\eta}{3\rho_1}\right)\right|\right).
\end{equation} 
Then $(\eta,\min (C_0\eta^{-1},\3_2)e^{\rho_1s/\beta_0})\in \cD_{b_1}'$. Hence $\cD_{b_1}'\ne\phi$.
For any $(\widetilde{q},h)\in \cD_{b_1}',$ let 
\begin{equation*}
\Phi(\widetilde{q},h):=\left(\Phi_1(\widetilde{q},h),\Phi_2(\widetilde{q},h)\right)
\end{equation*}  
be given by 
\begin{equation}\label{phi12-defn5}
\left\{\begin{aligned}
&\Phi_1(\widetilde{q},h)(s):=\eta\, \mbox{exp}\left(\int_{-\infty}^sh(\rho)\,d\rho\right)\qquad\qquad\qquad\quad\,\,\forall s<-b_1\\
&\Phi_2(\widetilde{q},h)(s):=\frac{\alpha_0(n-2)}{\beta_0}\int_{-\infty}^se^{a_0(\widetilde{q},\rho)-a_0(\widetilde{q},s)+(n-2)(\rho-s)}\,d\rho\quad\forall s<-b_1
\end{aligned}\right.
\end{equation}
where  $a_0(\widetilde{q},s)$ are given by \eqref{a0-defn}. 
We first prove  that  $\Phi(\cD_{b_1}')\subset \cD_{b_1}'$. Let $(\widetilde{q},h)\in \cD_{b_1}'$. Then 
\begin{equation}\label{tilde-q-upper-lower-bd}
\eta\le\widetilde{q}(s)\le 3\eta/2\quad\forall s<-b_1
\end{equation}
and by the definition of $\Phi_1(\widetilde{q},h)$,
\begin{equation}\label{Phi1-ge-eta3}
\Phi_1(\widetilde{w},z)(s)\ge\eta\quad\forall s<-b_1.
\end{equation}
By \eqref{a0-defn}, \eqref{tilde-q-upper-lower-bd} and the mean value theorem for any $\rho<s<-b_1$ there exists a constant $\xi\in (\rho,s)$ such that
\begin{equation}\label{a0-ineqn}
a_0(\widetilde{q},\rho)-a_0(\widetilde{q},s)=\beta_0 e^{-\rho_1\xi/\beta_0}\widetilde{q}(\xi)(\rho-s)
\le\beta_0\eta e^{-\rho_1s/\beta_0}(\rho-s)\quad\forall \rho<s<-b_1.
\end{equation}
Hence by \eqref{b1-range2}, \eqref{phi12-defn5} and \eqref{a0-ineqn} we have
\begin{align}
&0<\Phi_2(\widetilde{q},h)(s)\le \frac{\alpha_0(n-2)}{\beta_0}\int_{-\infty}^s\mbox{exp}\left(\left(\beta_0\eta e^{-\rho_1s/\beta_0}+n-2\right)(\rho-s)\right)\,d\rho\le C_0\eta^{-1}e^{\rho_1s/\beta_0}\label{phi2-lower-bd5}\\
\Rightarrow\quad& \Phi_2(\widetilde{q},h)(s)e^{-\rho_1s/\beta_0}\le C_0\eta^{-1}\quad\forall s<-b_1\label{phi2-upper-bd6}\\
\Rightarrow\quad& \Phi_2(\widetilde{q},h)(s)e^{-\frac{\rho_1s}{2\beta_0}}\le C_0\eta^{-1}e^{-\frac{\rho_1b_1}{2\beta_0}}\quad\forall s<-b_1\notag\\
\Rightarrow\quad&\|\Phi_2(\widetilde{w},z)\|_{L^\infty\left((-\infty,-b_1);e^{-\frac{\rho_1s}{2\beta_0}}\right)}\le \3_2.\label{phi2-upper-bd7}
\end{align}
Now by \eqref{exp-term-bd2} and \eqref{b1-range2} we have
\begin{align}\label{phi1-eta-bd5}
\left|\Phi_1(\widetilde{w},z)(s)-\eta\right|
\le&\eta\left(\mbox{exp}\left(\int_{-\infty}^sh(\rho)\,d\rho\right)-1\right)\qquad\forall s<-b_1\notag\\
\le&\eta\left(\mbox{exp}\left(\int_{-\infty}^sC_0\eta^{-1}e^{\frac{\rho_1\rho}{\beta_0}}\,d\rho\right)-1\right)\quad\forall s<-b_1\notag\\
\le&\eta\left(\mbox{exp}\left(\frac{C_0\beta_0}{\rho_1\eta}e^{-\frac{\rho_1b_1}{\beta_0}}\right)-1\right)\quad\forall s<-b_1\notag\\
\le&C_0'e^{-\frac{\rho_1b_1}{\beta_0}}\qquad\qquad\qquad\qquad\forall s<-b_1\notag\\
\le&\3_2\qquad\qquad\qquad\qquad\qquad\forall s<-b_1.
\end{align}
By \eqref{Phi1-ge-eta3}, \eqref{phi2-lower-bd5}, \eqref{phi2-upper-bd6}, \eqref{phi2-upper-bd7},  and \eqref{phi1-eta-bd5}, we get $\Phi(\cD_{b_1}')\subset \cD_{b_1}'$. 

We will now show that when $b_1$ is sufficiently large, $\Phi :\cD_{b_1}'\to \cD_{b_1}'$ is a contraction map. Let $(\widetilde{q}_1,h_1), (\widetilde{q}_2,h_2)\in \cD_{b_1}'$ and $\delta:=\|(\widetilde{q}_1,h_1)-(\widetilde{q}_2,h_2)\|_{\cX_{b_1}'}$. Then 
\begin{equation}\label{tilde-qi-upper-lower-bd}
\eta\le\widetilde{q}_i(s)\le 3\eta/2\quad\forall s<b_1, i=1,2.
\end{equation}
By \eqref{a0-ineqn},
\begin{equation}\label{a0-ineqn2}
a_0(\widetilde{q}_i,\rho)-a_0(\widetilde{q}_i,s)\le\beta_0 e^{-\rho_1s/\beta_0}\eta(\rho-s)\quad\forall \rho<s<-b_1,i=1,2.
\end{equation}
By \eqref{tilde-qi-upper-lower-bd} and the mean value theorem there exists $\xi$ between $\int_{-\infty}^sh_1(\rho)\,d\rho$ and $\int_{-\infty}^sh_2(\rho)\,d\rho$ such that 
\begin{align}\label{phi1-12-l1-5}
|\Phi_1(\widetilde{q}_1,h_1)(s)-\Phi_1(\widetilde{q}_2,h_2)(s)|e^{-\frac{\rho_1 s}{4\beta_0}}=&\eta\left|\mbox{exp}\,\left(\int_{-\infty}^sh_1(\rho)\,d\rho\right)-\mbox{exp}\,\left(\int_{-\infty}^sh_2(\rho)\,d\rho\right)\right|e^{-\frac{\rho_1 s}{4\beta_0}}\notag\\
=&e^{-\frac{\rho_1 s}{4\beta_0}}\eta e^{\xi}\left|\int_{-\infty}^s(h_1(\rho)-h_2(\rho))\,d\rho\right|\notag\\
\le&e^{-\frac{\rho_1 s}{4\beta_0}}\max(\widetilde{q}_1(s),\widetilde{q}_2(s))\|h_1-h_2\|_{L^{\infty}\left((-\infty,-b_1);e^{-\frac{\rho_1s}{2\beta_0}}\right)}
\int_{-\infty}^se^{\frac{\rho_1\rho}{2\beta_0}}\,d\rho\notag\\
\le&\frac{3\beta_0\eta}{\rho_1}e^{-\frac{\rho_1b_1}{4\beta_0}}\delta
\qquad\forall s<-b_1.
\end{align}
Now by \eqref{phi12-defn5} we have
\begin{align}\label{phi2-difference6}
|\Phi_2(\widetilde{q}_1,h_1)(s)-\Phi_2(\widetilde{q}_2,h_2)(s)|
\le&\frac{\alpha_0(n-2)}{\beta_0}\int_{-\infty}^s\left|e^{a_0(\widetilde{q}_1,\rho)-a_0(\widetilde{q}_1,s)}-e^{a_0(\widetilde{q}_2,\rho)-a_0(\widetilde{q}_2,s)}\right|e^{(n-2)(\rho-s)}\,d\rho\notag\\
\le&\frac{\alpha_0(n-2)}{\beta_0}I_3\qquad\forall s<-b_1
\end{align}
where
\begin{equation*}
I_3=\int_{-\infty}^s\left|e^{a_0(\widetilde{q}_1,\rho)-a_0(\widetilde{q}_1,s)}-e^{a_0(\widetilde{q}_2,\rho)-a_0(\widetilde{q}_2,s)}\right|\,d\rho\quad\forall s<-b_1.
\end{equation*}
By \eqref{a0-ineqn2} and an argument similar to the proof of \eqref{exp-a-difference} we have
\begin{align}\label{exp-a0-difference}
&\left|e^{a_0(\widetilde{w}_1,\rho)-a_0(\widetilde{w}_1,s)}-e^{a_0(\widetilde{w}_2,\rho)-a_0(\widetilde{w}_2,s)}\right|\notag\\
\le&\frac{2\beta_0^2}{\rho_1}\left\{\mbox{exp}\,\left(\beta_0\eta e^{-\rho_1s/\beta_0}(\rho-s)-\frac{3\rho_1\rho}{4\beta_0}\right)
-\mbox{exp}\,\left(\beta_0\eta e^{-\rho_1s/\beta_0}(\rho-s)-\frac{3\rho_1s}{4\beta_0}\right)\right\}\delta\quad\forall \rho<s<-b_1.
\end{align}
Hence by \eqref{b1-range2} and \eqref{exp-a0-difference},
\begin{align}\label{I3-ineqn}
I_3\le&2\beta_0^2\rho_1^{-1}\delta\int_{-\infty}^s\left(\mbox{exp}\,\left(\beta_0\eta e^{-\rho_1s/\beta_0}(\rho-s)-\frac{3\rho_1\rho}{4\beta_0}\right)
-\mbox{exp}\,\left(\beta_0\eta e^{-\rho_1s/\beta_0}(\rho-s)-\frac{3\rho_1s}{4\beta_0}\right)\right)\,d\rho\notag\\
\le&2\beta_0^2\rho_1^{-1}\delta\left(\frac{e^{-\frac{3\rho_1s}{4\beta_0}}}{\beta_0\eta e^{-\rho_1s/\beta_0}-\frac{3\rho_1}{4\beta_0}}
-\frac{e^{-\frac{3\rho_1s}{4\beta_0}}}{\beta_0\eta e^{-\rho_1s/\beta_0}}\right)\notag\\
\le&\frac{3\eta^{-1}e^{\frac{5\rho_1s}{4\beta}}\delta}{\beta_0\eta
-\frac{3\rho_1}{4\beta_0}e^{\frac{\rho_1s}{\beta_0}}}\quad\forall s<-b_1\notag\\
\le&\frac{3\eta^{-1}e^{\frac{5\rho_1s}{4\beta_0}}\delta}{\beta_0\eta
-\frac{3\rho_1}{4\beta_0}e^{-\frac{\rho_1b_1}{\beta_0}}}\quad\forall s<-b_1.
\end{align}
Thus by \eqref{phi2-difference6} and \eqref{I3-ineqn} we have
\begin{equation}\label{phi2-12-l1-7}
|\Phi_2(\widetilde{q}_1,h_1)(s)-\Phi_2(\widetilde{q}_2,h_2)(s)|e^{-\frac{\rho_1s}{2\beta}}
\le\frac{3\alpha_0(n-2)\eta^{-1}e^{-\frac{3\rho_1b_1}{4\beta_0}}\delta}{\beta_0\left(\beta_0\eta
-\frac{3\rho_1}{4\beta_0}e^{-\frac{\rho_1b_1}{\beta_0}}\right)}
\quad\forall s<-b_1.
\end{equation}
By choosing $b_1$ sufficiently large in \eqref{phi1-12-l1-5} and \eqref{phi2-12-l1-7} we get
\begin{equation}\label{phi1-contraction7}
\|\Phi_1(\widetilde{q}_1,h_1)-\Phi_1(\widetilde{q}_2,h_2)\|_{L^\infty\left((-\infty,-b_1);e^{-\frac{\rho_1}{4\beta}s}\right)}\le\frac{\delta}{5}
\end{equation}
and
\begin{equation}\label{phi2-contraction7}
\|\Phi_2(\widetilde{q}_1,h_1)-\Phi_2(\widetilde{q}_2,h_2)\|_{L^\infty\left((-\infty,-b_1);e^{-\frac{\rho_1}{2\beta}s}\right)}\le\frac{\delta}{5}.
\end{equation}
By \eqref{phi1-contraction7} and \eqref{phi2-contraction7},  for sufficiently large $b_1$ we have
\begin{equation}\label{phi-contraction7}
\|\Phi(\widetilde{q}_1,h_1)-\Phi(\widetilde{q}_2,h_2)\|_{\cX_{b_1}'}\le\frac{\delta}{5}.
\end{equation}
Hence we can choose a  sufficiently large  $b_1$ such that the map $\Phi:\cD_{b_1}'\to \cD_{b_1}'$ is a contraction map with Lipschitz constant less than $1/5$. Since $\cD_{b_1}'$ is a complete metric space, by the contraction mapping theorem there exists a unique fixed point 
$(\widetilde{q},h)=\Phi(\widetilde{q},h)\in\cD_{b_1}'$ that satisfies 
\eqref{h-representation-formula},  \eqref{tilde-q-formula}  and
\begin{equation}\label{h-growth-rate-bd2}
\widetilde{q}(s)\ge\eta, \quad 0\le h(s)e^{-\frac{\rho_1}{\beta}s}\le C_0\eta^{-1}\quad\forall s<-b_1.
\end{equation}
Since $h=\Phi_2(\widetilde{q},h)$, by  \eqref{h-representation-formula}  $h(s)>0$ for any $s<-b_1$. This together with \eqref{h-growth-rate-bd2} implies that \eqref{h-growth-rate-bd} holds in $(-\infty,-b_1)$. Multiplying \eqref{h-representation-formula} by $e^{a_0(\widetilde{q},s)+(n-2)s}$ and differentiating  with respect to $s$ we get that $h$ satisfies \eqref{h-eqn} in $(-\infty,-b_1)$.

Finally by an argument similar to the proof of Lemma \ref{z-local-existence-lem} $h$ is unique
and the lemma follows.

\end{proof}

By an argument similar to the proof of Theorem \ref{z-existence-thm} but with Lemma \ref{h-local-existence-lem} replacing Lemma \ref{z-local-existence-lem} in the proof we get the following theorem.

\begin{thm}\label{h-existence-thm}
Let $n\ge 3$, $\eta>0$, $\rho_1>0$, $\beta_0>0$, $\alpha_0=2\beta_0+\rho_1$ and $C_0$ be given by \eqref{c0-defn}. Then  the equation \eqref{h-eqn} has a unique  solution $h\in C^1(\mathbb{R})$   in $\mathbb{R}$ that satisfies \eqref{h-growth-rate-bd} in $\mathbb{R}$ with $\widetilde{q}$ given by
\eqref{tilde-q-formula}. 

\end{thm}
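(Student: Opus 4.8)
The plan is to follow verbatim the structure of the proof of Theorem \ref{z-existence-thm}, with Lemma \ref{h-local-existence-lem} playing the role of Lemma \ref{z-local-existence-lem}. First I would invoke Lemma \ref{h-local-existence-lem} to get a constant $b_1>0$ and a unique solution $h$ of \eqref{h-eqn} on $(-\infty,-b_1)$ satisfying \eqref{h-growth-rate-bd} there, with $\widetilde q$ given by \eqref{tilde-q-formula}. Then I would let $(-\infty,b_2)$ denote the maximal interval on which \eqref{h-eqn} admits a solution $h$ satisfying \eqref{h-growth-rate-bd} with $\widetilde q$ still given by the formula \eqref{tilde-q-formula}, so that $b_2\ge -b_1$, and the whole point is to show $b_2=\infty$.

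To rule out $b_2<\infty$ I would argue by contradiction. Choosing $s_i\to b_2$ with $s_i<b_2$, the bound \eqref{h-growth-rate-bd} shows the $h(s_i)$ stay bounded, so along a subsequence $h(s_i)$ converges to a finite limit in $[0,C_0\eta^{-1}e^{\rho_1 b_2/\beta_0}]$. Re-running the contraction argument of Lemma \ref{h-local-existence-lem}, but now based at the points $s_i$ with initial value $h(s_i)$ and $\widetilde q$ continued from $\widetilde q(s_i)$ by the analogue of \eqref{tilde-q-formula}, produces a local existence length $\delta_1>0$ independent of $i$; hence for $i_0$ large with $s_{i_0}+\delta_1>b_2$ one extends $h$, $q$, $\widetilde q$ to $(-\infty,s_{i_0}+\delta_1)$, with $h$ still solving \eqref{h-eqn}. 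Next I would establish positivity of $h$ on the enlarged interval: if $h$ vanished somewhere, set $b_4=\sup\{b:\ h>0\text{ on }(-\infty,b)\}$, so $h(b_4)=0$ and $h_s(b_4)\le 0$; but evaluating \eqref{h-eqn} at $b_4$ gives $h_s(b_4)=\alpha_0(n-2)/\beta_0>0$ since $\alpha_0=2\beta_0+\rho_1>0$ and $n\ge 3$, a contradiction. Positivity then gives $\widetilde q\ge\eta$ on $(-\infty,s_{i_0}+\delta_1)$, and feeding this into the representation formula \eqref{h-representation-formula} together with the elementary bound $a_0(\widetilde q,\rho)-a_0(\widetilde q,s)\le\beta_0\eta e^{-\rho_1 s/\beta_0}(\rho-s)$ (as in \eqref{a0-ineqn}) reproduces \eqref{h-growth-rate-bd} on $(-\infty,s_{i_0}+\delta_1)$, contradicting the maximality of $b_2$. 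Therefore $b_2=\infty$ and $h$ is a solution on all of $\mathbb{R}$ with the stated bound.

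For uniqueness I would take another solution $h_3$ of \eqref{h-eqn} on $\mathbb{R}$ satisfying \eqref{h-growth-rate-bd} with $\widetilde q_3(s)=\eta\exp(\int_{-\infty}^s h_3(\rho)\,d\rho)$. Since \eqref{h-growth-rate-bd} forces $\lim_{s\to-\infty}h_3(s)=0$, the derivation leading to \eqref{h-representation-formula} applies verbatim and shows $h_3$ satisfies the integral equation \eqref{h-representation-formula} with $a_0(\widetilde q,\cdot)$ replaced by $a_0(\widetilde q_3,\cdot)$. As in the end of the proof of Lemma \ref{h-local-existence-lem}, the growth bounds then guarantee, for the $b_1$ of that lemma, that $\|h_3\|$ and $\|\widetilde q_3-\eta\|$ are small enough in the relevant weighted norms that $(\widetilde q_3,h_3)\in\cD_{b_1}'$ and $(\widetilde q_3,h_3)=\Phi(\widetilde q_3,h_3)$; uniqueness of the fixed point of $\Phi$ gives $h_3=h$ and $\widetilde q_3=\widetilde q$ on $(-\infty,-b_1)$. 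Finally, picking any $b<-b_1$, both $h$ and $h_3$ solve the first-order ODE \eqref{h-eqn} on $(b,\infty)$ with the same $\widetilde q$-relation and the same value at $b$, so standard ODE uniqueness yields $h=h_3$ on $[b,\infty)$, hence on $\mathbb{R}$.

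The main obstacle is the continuation step in the second paragraph: one has to redo the fixed-point construction of Lemma \ref{h-local-existence-lem} starting from an arbitrary base point $s_i$ rather than from $-\infty$, obtaining a local existence time $\delta_1$ uniform in $i$, and then re-establish the one-sided growth bound \eqref{h-growth-rate-bd} on the enlarged interval by combining positivity of $h$ with the representation formula. Once these are in place, everything else is bookkeeping that runs parallel to the proof of Theorem \ref{z-existence-thm}.
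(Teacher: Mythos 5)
Your proposal is correct and runs parallel to the paper's own (very brief) proof, which simply states that Theorem~\ref{h-existence-thm} follows by an argument similar to that of Theorem~\ref{z-existence-thm} with Lemma~\ref{h-local-existence-lem} replacing Lemma~\ref{z-local-existence-lem}. You have carried out precisely this substitution, and each step transcribes faithfully: the maximal-interval continuation, the positivity argument at a first zero $b_4$ (where \eqref{h-eqn} gives $h_s(b_4)=\alpha_0(n-2)/\beta_0>0$, the analogue of $z_s(b_4)=C_2>0$), the recovery of the growth bound \eqref{h-growth-rate-bd} from positivity plus the representation formula \eqref{h-representation-formula} and \eqref{a0-ineqn}, and the uniqueness argument via the contraction map on $\cD_{b_1}'$ followed by standard first-order ODE uniqueness on $[b,\infty)$.
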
 
By an argument similar to the proof of Theorem \ref{blow-up-self-similar-soln-thm} but with Theorem \ref{h-existence-thm} replacing Theorem \ref{z-existence-thm} in the proof, we get Theorem \ref{log-blow-up-self-similar-soln-thm} and the following corollary.

\begin{cor}\label{h-decay-cor}
Let $n\ge 3$, $\eta>0$, $\rho_1>0$, $\beta_0>0$  and $\alpha_0$ be given by \eqref{alpha0-beta0-relation}. Let $g\in C^2(0,\infty)$ be the unique solution of \eqref{g-ode} that satisfies \eqref{g-blow-up-rate-x=0} given by Theorem \ref{log-blow-up-self-similar-soln-thm} and let $h$ be given by \eqref{h-defn}. Then
$h$ satisfies \eqref{h-growth-rate-bd} in $\mathbb{R}$.
\end{cor} 

By an argument similar to the proof of  Corollary \ref{f-upper-lower-bds-cor} we get  the following corollary.

\begin{cor}\label{g-upper-lower-bds-cor}
Let $n\ge 3$, $\eta>0$, $\rho_1>0$, $\beta_0>0$, $\alpha_0=2\beta_0+\rho_1$ and $C_0$ be given by \eqref{c0-defn}. Let $g\in C^2(0,\infty)$ be the unique solution of \eqref{g-ode} that satisfies \eqref{g-blow-up-rate-x=0} given by Theorem \ref{log-blow-up-self-similar-soln-thm} which satisfies \eqref{g-blow-up-rate-x=0} 
 where $q$, $h$, are given by \eqref{q-defn}, \eqref{h-defn}. Then
\begin{equation}\label{g-gr-ineqn}
g+\frac{\beta_0}{\alpha_0}rg_r>0\quad\forall r>0
\end{equation}
and
\begin{equation}\label{g-upper-lower-bds}
\eta r^{-\alpha_0/\beta_0}<g(r)\le \eta r^{-\alpha_0/\beta_0}\mbox{exp}\,\left(\frac{\alpha_0(n-2)}{\beta_0\eta}r^{\rho_1/\beta_0}\right)\quad\forall r>0.
\end{equation}
\end{cor}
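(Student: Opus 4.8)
The plan is to transcribe, essentially line for line, the proof of Corollary~\ref{f-upper-lower-bds-cor} into the $q,h$ variables of Section~3; both assertions will come out as elementary consequences of the growth bound \eqref{h-growth-rate-bd} together with the boundary value $q(0^+)=\eta$ (which is just \eqref{g-blow-up-rate-x=0} rewritten via \eqref{q-defn}).

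First I would record the algebraic identity relating $h$ to $g$. Writing $q(r)=r^{\alpha_0/\beta_0}g(r)$ and differentiating, $rq_r=r^{\alpha_0/\beta_0}\bigl(\tfrac{\alpha_0}{\beta_0}g+rg_r\bigr)$, so that by \eqref{h-defn}
\[
h(s)=\frac{rq_r(r)}{q(r)}=\frac{\alpha_0}{\beta_0}\cdot\frac{g(r)+\frac{\beta_0}{\alpha_0}rg_r(r)}{g(r)},\qquad s=\log r .
\]
Equivalently $g+\tfrac{\beta_0}{\alpha_0}rg_r=\tfrac{\beta_0}{\alpha_0}\,h(s)\,g(r)$. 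Since $g>0$ in $(0,\infty)$ and, by \eqref{h-growth-rate-bd}, $h(s)>0$ for every $s\in\mathbb{R}$, while $\alpha_0,\beta_0>0$, the inequality \eqref{g-gr-ineqn} follows at once.

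For the two-sided estimate \eqref{g-upper-lower-bds} I would rewrite the same relation as $(\log q)_r(r)=h(\log r)/r$, so that \eqref{h-growth-rate-bd} gives
\[
0<(\log q)_r(r)\le\frac{C_0}{\eta}\,r^{\rho_1/\beta_0-1}\qquad\forall r>0 .
\]
Now one integrates this differential inequality over $(0,r)$, using $\lim_{r\to 0^+}q(r)=\eta$ as the value at the singular endpoint and the fact that the integral $\int_0^r\rho^{\rho_1/\beta_0-1}\,d\rho=\tfrac{\beta_0}{\rho_1}r^{\rho_1/\beta_0}$ is finite; this produces $q(r)>\eta$ and an upper bound for $q(r)$ of the form $\eta\exp\bigl(c\,r^{\rho_1/\beta_0}\bigr)$ with $c$ a fixed multiple of $C_0/\eta$. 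Multiplying through by $r^{-\alpha_0/\beta_0}$ and using $g=r^{-\alpha_0/\beta_0}q$ together with $C_0=\alpha_0(n-2)/\beta_0^2$ (cf. \eqref{c0-defn}) then yields \eqref{g-upper-lower-bds}.

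I do not expect any genuine obstacle: the computation is the verbatim analogue of Corollary~\ref{f-upper-lower-bds-cor}, with $z$, $w$, $C_4\eta^{m-1}$ replaced by $h$, $q$, $C_0/\eta$. The only point deserving a sentence of care is the value of the integration constant at $r=0$, namely $q(0^+)=\eta$: this requires that $\int_{-\infty}^s h(\rho)\,d\rho$ converge and that $\widetilde q(s)=\eta\exp\bigl(\int_{-\infty}^s h\bigr)\to\eta$ as $s\to-\infty$, which is exactly the content of the exponential decay $h(s)\le(C_0/\eta)e^{\rho_1 s/\beta_0}$ furnished by \eqref{h-growth-rate-bd} and is already part of the construction of $g$ via Theorem~\ref{h-existence-thm} and Theorem~\ref{log-blow-up-self-similar-soln-thm}, so no new estimate is needed.
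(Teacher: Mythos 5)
Your proposal is correct and follows exactly the route the paper itself indicates (the paper states Corollary \ref{g-upper-lower-bds-cor} follows ``by an argument similar to the proof of \ldots Corollary \ref{f-upper-lower-bds-cor}'' without giving details, and your line-by-line translation into the $q,h,g$ variables is precisely that). One small point worth flagging: carrying out the integration in the last step gives $\log q(r)-\log\eta \le \frac{C_0}{\eta}\cdot\frac{\beta_0}{\rho_1}r^{\rho_1/\beta_0}=\frac{\alpha_0(n-2)}{\beta_0\rho_1\eta}r^{\rho_1/\beta_0}$, i.e.\ there is a factor $1/\rho_1$ in the exponent (exactly as in Corollary \ref{f-upper-lower-bds-cor}, whose bound has $\frac{C_3\eta^{m-1}}{\rho_1}$); the constant printed in \eqref{g-upper-lower-bds} lacks this $\rho_1$, which appears to be a typo in the paper, and your wording ``$c$ a fixed multiple of $C_0/\eta$'' glosses over it rather than pinning it down.
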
 

By Corollary \ref{g-upper-lower-bds-cor} and an argument similar to the proof of Corollary \ref{f-upper-lower-bds-cor} we have the following result.

\begin{cor}\label{gr-negative2-cor}
Let $n\ge 3$, $\eta>0$, $\rho_1>0$, $\beta_0>0$, $\alpha_0=2\beta_0+\rho_1$ and $C_0$ be given by \eqref{c0-defn}. Let $g\in C^2(0,\infty)$ be the unique solution of \eqref{g-ode} that satisfies \eqref{g-blow-up-rate-x=0} given by Theorem \ref{log-blow-up-self-similar-soln-thm} which satisfies \eqref{g-blow-up-rate-x=0}  where $q$, $h$, are given by \eqref{q-defn}, \eqref{h-defn}. Then
\begin{equation}\label{gr-negative2}
g_r<0\quad\forall r>0.
\end{equation}
\end{cor}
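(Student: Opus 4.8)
The plan is to follow the proof of Corollary \ref{fr-negative2-cor} almost verbatim, with $g$ in place of $f$, the flux $r^{n-1}(\log g)_r=r^{n-1}g_r/g$ in place of $r^{n-1}(f^m/m)_r$, the exponents $\alpha_0,\beta_0$ in place of $\alpha,\beta$, the function $h$ in place of $z$ and $q$ in place of $w$, and using \eqref{g-gr-ineqn} of Corollary \ref{g-upper-lower-bds-cor} in place of \eqref{f-fr-ineqn}.

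First I would put the radial equation \eqref{g-ode} into divergence form: multiplying \eqref{g-ode} by $r^{n-1}$ gives
\begin{equation*}
\left(r^{n-1}\frac{g_r}{g}\right)_r=-r^{n-1}\left(\alpha_0 g+\beta_0 rg_r\right)\quad\forall r>0 .
\end{equation*}
Since $\alpha_0=2\beta_0+\rho_1>0$, inequality \eqref{g-gr-ineqn} is equivalent to $\alpha_0 g+\beta_0 rg_r>0$ on $(0,\infty)$, so the right-hand side above is strictly negative. Hence the map $r\mapsto r^{n-1}g_r(r)/g(r)$ is strictly decreasing on $(0,\infty)$.

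Next I would borrow from the proof of Theorem \ref{log-blow-up-self-similar-soln-thm2} (which, as the text indicates, parallels the proof of Theorem \ref{blow-up-self-similar-soln-thm2}) a sequence $\xi_i\to-\infty$ along which $h(\xi_i)\to 0$; concretely, since $\widetilde q\to\eta$ as $s\to-\infty$, the mean value theorem produces points $\xi_i$ with $\widetilde q_s(\xi_i)\to 0$, whence $h(\xi_i)=\widetilde q_s(\xi_i)/\widetilde q(\xi_i)\to 0$, exactly as in \eqref{z3-limit}. Setting $r_i=e^{\xi_i}\to 0^+$ and using $h(s)=rq_r(r)/q(r)=\frac{\alpha_0}{\beta_0}+rg_r(r)/g(r)$ from \eqref{q-defn} and \eqref{h-defn}, I obtain
\begin{equation*}
\lim_{i\to\infty}\frac{r_ig_r(r_i)}{g(r_i)}=-\frac{\alpha_0}{\beta_0}<0 .
\end{equation*}
Because $g>0$, this forces $g_r(r_i)<0$ for all $i$ large, say $i\ge i_0$. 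Now, integrating the divergence-form equation over $(r_i,r)$ for $i\ge i_0$ and using the monotonicity of the flux, I get
\begin{equation*}
r^{n-1}\frac{g_r(r)}{g(r)}<r_i^{n-1}\frac{g_r(r_i)}{g(r_i)}<0\quad\forall r>r_i ,
\end{equation*}
hence $g_r(r)<0$ for every $r>r_i$. Letting $i\to\infty$ (so that $r_i\to 0^+$) gives $g_r(r)<0$ for all $r>0$, which is \eqref{gr-negative2}.

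I do not expect a real obstacle: all inputs — the sign of $\alpha_0 g+\beta_0 rg_r$ from Corollary \ref{g-upper-lower-bds-cor}, and the sequence with $h(\xi_i)\to 0$ from the proof of Theorem \ref{log-blow-up-self-similar-soln-thm2} — are already available. The only point requiring a little care is checking that the construction of the sequence $\{\xi_i\}$ in the proof of Theorem \ref{blow-up-self-similar-soln-thm2} carries over unchanged to the logarithmic setting; but this amounts to the same oscillation estimate for $\widetilde q$ near $-\infty$ (from $\widetilde q\to\eta$) that was already used in \eqref{z3-limit}, so it should be immediate.
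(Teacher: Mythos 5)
Your proof is correct and follows exactly the route the paper intends: the paper itself introduces Corollary \ref{gr-negative2-cor} with the remark that it follows ``by an argument similar to'' the $f$-case (the reference there is in fact to Corollary \ref{fr-negative2-cor}, whose proof you are adapting, not Corollary \ref{f-upper-lower-bds-cor}), i.e.\ put \eqref{g-ode} in the divergence form $(r^{n-1}g_r/g)_r=-r^{n-1}(\alpha_0 g+\beta_0 rg_r)<0$ via \eqref{g-gr-ineqn}, extract a sequence $r_i\to 0^+$ with $r_ig_r(r_i)/g(r_i)\to-\alpha_0/\beta_0<0$ from $h(\xi_i)\to 0$, and propagate the negative sign forward by monotonicity of the flux. (A minor simplification worth noting: \eqref{h-growth-rate-bd} already forces $h(s)\to 0$ as $s\to-\infty$ without needing to re-run the mean-value-theorem construction from the proof of Theorem \ref{blow-up-self-similar-soln-thm2}.)
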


By an argument similar to the proof of Proposition \ref{f-lambda-representation-formula-prop} but with Theorem \ref{log-blow-up-self-similar-soln-thm} replacing Theorem \ref{blow-up-self-similar-soln-thm} in the proof there we have the following result.

\begin{prop}\label{g-lambda-representation-formula-prop}
Let $n\ge 3$, $\eta>0$, $\rho_1>0$, $\beta_0>0$, $\alpha_0=2\beta_0+\rho_1$ and $C_0$ be given by \eqref{c0-defn}.  Let $g\in C^2(0,\infty)$ be the unique solution of \eqref{g-ode} that satisfies \eqref{g-blow-up-rate-x=0} given by Theorem \ref{log-blow-up-self-similar-soln-thm} which satisfies \eqref{g-blow-up-rate-x=0} with $\eta=1$ with $\eta=1$ where $q$, $h$, are given by \eqref{q-defn}, \eqref{h-defn}.  For any $\lambda>0$, let $g_{\lambda}\in C^2(0,\infty)$ be the  unique   solution of \eqref{g-ode} given by Theorem \ref{log-blow-up-self-similar-soln-thm} which satisfies 
\begin{equation*}
\lim_{\rho\to 0}\rho^{\alpha_0/\beta_0}g_{\lambda}(\rho)=\lambda^{-\rho_1/\beta_0}.
\end{equation*}
Then
\begin{equation*}\label{g-lambda-representation-formula}
g_{\lambda}(r)=\lambda^2g_1(\lambda r)\quad\forall r>0.
\end{equation*} 
\end{prop}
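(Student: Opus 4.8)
The plan is to follow the proof of Proposition~\ref{f-lambda-representation-formula-prop}, with the scaling exponent $\frac{2}{1-m}$ of the fast diffusion case replaced by the exponent $2$ appropriate to the logarithmic diffusion equation. Concretely, I would set
\[
v(r):=\lambda^2 g_1(\lambda r)\qquad\forall r>0,
\]
and show that $v$ is a radially symmetric solution of \eqref{log-elliptic-eqn} with the prescribed behaviour at the origin, so that $v\equiv g_\lambda$ by the uniqueness assertion of Theorem~\ref{log-blow-up-self-similar-soln-thm2}.

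First I would verify that $v$ solves \eqref{log-elliptic-eqn} in $\mathbb{R}^n\setminus\{0\}$. Since $\log v(x)=2\log\lambda+\log g_1(\lambda x)$, the additive constant drops out of the Laplacian and $\Delta\log v(x)=\lambda^2(\Delta\log g_1)(\lambda x)$; similarly $\alpha_0 v(x)=\lambda^2\alpha_0 g_1(\lambda x)$ and $\beta_0 x\cdot\nabla v(x)=\lambda^2\beta_0(\lambda x)\cdot(\nabla g_1)(\lambda x)$. Adding the three identities and using that $g_1$ satisfies \eqref{log-elliptic-eqn} at the point $y=\lambda x$ shows that $\Delta\log v+\alpha_0 v+\beta_0 x\cdot\nabla v=0$ and $v>0$ in $\mathbb{R}^n\setminus\{0\}$. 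Next, writing $\rho=\lambda r$ and using \eqref{g-blow-up-rate-x=0} for $g_1$ with $\eta=1$,
\[
\lim_{r\to 0}r^{\alpha_0/\beta_0}v(r)=\lambda^{2-\alpha_0/\beta_0}\lim_{\rho\to 0}\rho^{\alpha_0/\beta_0}g_1(\rho)=\lambda^{2-\alpha_0/\beta_0},
\]
and since $\alpha_0=2\beta_0+\rho_1$ by \eqref{alpha0-beta0-relation} we have $2-\alpha_0/\beta_0=-\rho_1/\beta_0$. Hence $v$ satisfies $\lim_{r\to 0}r^{\alpha_0/\beta_0}v(r)=\lambda^{-\rho_1/\beta_0}$, which is exactly the normalization defining $g_\lambda$. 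By the uniqueness part of Theorem~\ref{log-blow-up-self-similar-soln-thm2}, $v=g_\lambda$ on $(0,\infty)$, i.e. $g_\lambda(r)=\lambda^2 g_1(\lambda r)$ for all $r>0$.

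I do not expect a genuine obstacle here: the argument is purely a one–parameter scaling computation combined with the uniqueness theorem already established. The only steps requiring care are tracking the power of $\lambda$ produced by the differential operator — the point being that all three terms of \eqref{log-elliptic-eqn} scale with the same common factor $\lambda^2$, which is precisely what forces the exponent in $v=\lambda^2 g_1(\lambda\,\cdot\,)$ — and correctly matching $2-\alpha_0/\beta_0$ with $-\rho_1/\beta_0$ through \eqref{alpha0-beta0-relation}.
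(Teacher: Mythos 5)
Your proof is correct and follows essentially the same argument the paper intends: you define $v(r)=\lambda^2 g_1(\lambda r)$, check that the three terms in \eqref{log-elliptic-eqn} each pick up a common factor $\lambda^2$ under this scaling, confirm that $\lim_{r\to 0}r^{\alpha_0/\beta_0}v(r)=\lambda^{2-\alpha_0/\beta_0}=\lambda^{-\rho_1/\beta_0}$ via \eqref{alpha0-beta0-relation}, and invoke the uniqueness part of Theorem~\ref{log-blow-up-self-similar-soln-thm2} to identify $v$ with $g_\lambda$. This is exactly the analogue of the paper's proof of Proposition~\ref{f-lambda-representation-formula-prop}, which is what the paper's one-line remark points to.
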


\section{Asymptotic  of the singular self-similar solution of the fast diffusion equation as $|x|\to\infty$}
\setcounter{equation}{0}
\setcounter{thm}{0}

In this section we will use a modification of the technique of \cite{Hs4} to prove the asymptotic behaviour of the singular self-similar solution of the fast diffusion equation as $|x|\to\infty$.
We assume that  $n\ge 3$, $0<m<\frac{n-2}{n}$, $\rho_1>0$, $\eta>0$, $\beta>\frac{m\rho_1}{n-2-nm}$ and $\alpha$ satisfies \eqref{alpha-beta-relation2} and  let $f$ be the  unique  radially symmetric solution  of \eqref{elliptic-eqn} that satisfies \eqref{blow-up-rate-x=0} and \eqref{z-growth-rate-bd} in $\mathbb{R}$ given by Theorem \ref{blow-up-self-similar-soln-thm} where $w$ and $z$ are given by \eqref{w-defn} and \eqref{z-defn} for the rest of the section. Let $B_1(0)=\{x\in\mathbb{R}^n:|x|<1\}$,
\begin{equation}\label{v-defn}
v(r)=r^2f(r)^{1-m}\quad\forall r>0,
\end{equation}
and
\begin{equation*}
\beta_0'=\frac{m\rho_1}{n-2-nm},\quad\beta_1=\frac{\rho_1}{n-2-nm}.
\end{equation*}
Note that
\begin{equation}\label{alpha>n-beta}
\alpha> n\beta\quad (\alpha=n\beta, \alpha\le n\beta) \quad\Leftrightarrow\quad\beta<\beta_1 \quad \left(\beta=\beta_1,\beta>\beta_1,\,respectively\right)
\end{equation}
and
\begin{equation}\label{alpha-beta-m-relation}
\beta>\beta_0'\quad\Leftrightarrow\quad \frac{m\alpha}{\beta}<n-2.
\end{equation}
Hence when \eqref{alpha-beta-m-relation} holds, there exists a constant $\3_3>0$ such that
\begin{equation}\label{alpha-beta-lower-bd}
\frac{\alpha}{\beta}<\frac{n-2}{m}-\3_3.
\end{equation}
On the other hand multiplying \eqref{f-ode} by $r^{n-1}$ and integrating over $(\xi,r)$, $r>\xi>0$,
\begin{align}
&r^{n-1}(f^m/m)_r(r)=c(\xi)-\beta r^nf(r)-(\alpha-n\beta)\int_{\xi}^r\rho^{n-1}f(\rho)\,d\rho\quad\forall r>\xi>0\label{f-intregal-eqn}\\
\Rightarrow\quad&(f^m/m)_r(r)=c(\xi)r^{1-n}-\beta rf(r)-\frac{\alpha-n\beta}{r^{n-1}}\int_{\xi}^r\rho^{n-1}f(\rho)\,d\rho\quad\forall r>\xi>0\label{f-intregal-eqn30}
\end{align}
where 
\begin{equation}\label{c-xi-defn}
c(\xi)=\xi^{n-1}f(\xi)^{m-1}f_r(\xi)+\beta \xi^nf(\xi).
\end{equation}
Note that  by \eqref{f-intregal-eqn},
\begin{equation}\label{rfr-f-eqn}
r^{n-2}f^m(r)\cdot \frac{rf_r(r)}{f(r)}=c(\xi)-\beta r^nf(r)-(\alpha-n\beta)\int_{\xi}^r\rho^{n-1}f(\rho)\,d\rho\quad\forall r>\xi>0.
\end{equation}
Integrating \eqref{f-intregal-eqn30} over $(r,r_i)$, $0<\xi<r<r_i$, $i\in\mathbb{Z}^+$, we get
\begin{equation}\label{f-intregal-eqn4}
\frac{f(r)^m}{m}=\frac{f(r_i)^m}{m}+\frac{c(\xi)}{n-2}(r_i^{2-n}-r^{2-n})+\beta\int_r^{r_i}\rho f(\rho)\,d\rho+\int_r^{r_i}\frac{\alpha-n\beta}{s^{n-1}}\left(\int_{\xi}^s\rho^{n-1}f(\rho)\,d\rho\right)\,ds
\end{equation}
for any $r_i>r>\xi>0$, $i\in\mathbb{Z}^+$.
Note that for $\beta_0'<\beta\le\beta_1$, if $r_i\to\infty$ as $i\to\infty$ and 
\begin{equation}\label{fri-goes-to0}
f(r_i)\to 0\quad\mbox{  as }i\to\infty,
\end{equation}
 then by letting $i\to\infty$ in \eqref{f-intregal-eqn4}, by  \eqref{fri-goes-to0} and the monotone convergence theorem we get,
\begin{equation}\label{f-intregal-eqn5}
\frac{f(r)^m}{m}=-\frac{c(\xi)}{n-2}r^{2-n}+\beta\int_r^{\infty}\rho f(\rho)\,d\rho+\int_r^{\infty}\frac{\alpha-n\beta}{s^{n-1}}\left(\int_{\xi}^s\rho^{n-1}f(\rho)\,d\rho\right)\,ds\quad\forall r>\xi>0.
\end{equation}
We now recall a result of \cite{Hu3}.

\begin{lem}\label{f-integral-eqn-lemma}(Lemma 2.1 of \cite{Hu3})
Let $\beta\ge\beta_1$ and $f\in C^2(\mathbb{R}^n\setminus\{0\})$ be a radially symmetric solution of \eqref{elliptic-eqn} in $\mathbb{R}^n\setminus\{0\}$ which satisfies \eqref{blow-up-rate-x=0}. Then $f$ satisfies
\begin{equation*}
r^{n-1}(f^m/m)'(r)+\beta r^nf(r)=(n\beta-\alpha)\int_0^rf(\rho)\rho^{n-1}\,d\rho\quad\forall r>0
\end{equation*}
if $\beta>\beta_1$ and $f$ satisfies
\begin{equation*}
r^{n-1}(f^m/m)'(r)+\beta r^nf(r)=\beta\eta\quad\forall r>0\quad\mbox{ if }\beta=\beta_1.
\end{equation*}
\end{lem}

By Lemma \ref{f-integral-eqn-lemma} and the same argument as in \cite{Hs4} we have the following result.

\begin{prop}\label{beta>beta1-limit-case-prop}
Let $n\ge 3$, $0<m<\frac{n-2}{n}$, $\rho_1>0$, $\beta>\beta_1$ and $\alpha=\alpha_m$ be given by \eqref{alpha-beta-relation2}. Let $f\in C^2(\mathbb{R}^n\setminus\{0\})$ be the 
unique  radially symmetric solution  of \eqref{elliptic-eqn} that satisfies \eqref{blow-up-rate-x=0}  given by Theorem \ref{blow-up-self-similar-soln-thm}. Then $f$ satisfies \eqref{f-limit-infty}.
\end{prop}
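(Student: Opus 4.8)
The plan is to follow the method of \cite{Hs4}, starting from the integral identity \eqref{f-integral-eqn} valid for $\beta>\beta_1$. First I would set $v(r)=r^2f(r)^{1-m}$ as in \eqref{v-defn} and aim to show $v(r)\to\frac{2(n-2-nm)}{(1-m)\rho_1}$ as $r\to\infty$; equivalently, writing $\ell:=\frac{2(n-2-nm)}{(1-m)\rho_1}$, I want to show that $r^2f(r)^{1-m}$ converges to this explicit constant. The first step is to use \eqref{f-integral-eqn} together with the monotonicity $f_r<0$ from Corollary \ref{fr-negative2-cor} and the bounds \eqref{f-upper-lower-bds} to control $\int_0^r f(\rho)\rho^{n-1}\,d\rho$ and hence to obtain crude upper and lower bounds $c_1 r^{-2/(1-m)}\le f(r)\le c_2 r^{-2/(1-m)}$ for $r$ large. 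Indeed, since $\alpha/\beta$ can be shown (using \eqref{alpha>n-beta}, \eqref{alpha-beta-m-relation}) to exceed $2/(1-m)$ precisely in the relevant range, the term $\eta r^{-\alpha/\beta}$ in the lower bound of \eqref{f-upper-lower-bds} is \emph{not} the dominant balance at infinity; instead the $\beta r f_r$ and $\alpha f$ terms in \eqref{f-ode} must balance the diffusion term, forcing the $r^{-2/(1-m)}$ rate. Establishing these two-sided polynomial bounds, at least along a sequence $r_i\to\infty$, gives in particular \eqref{fri-goes-to0}.

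The second step is to feed these bounds back into \eqref{f-integral-eqn} to compute the limit of each term. Dividing \eqref{f-integral-eqn} by $r^n f(r)$, the left side becomes $\frac{m r^{n-1}f^{m-1}f_r}{r^n f}+\beta = \frac{m}{r}\cdot\frac{f_r}{f}\cdot r^{?}+\beta$; more usefully, rewrite \eqref{f-integral-eqn} as
\begin{equation*}
\frac{m f_r(r)}{r f(r)^{2-m}} + \beta = \frac{n\beta-\alpha}{r^n f(r)}\int_0^r f(\rho)\rho^{n-1}\,d\rho .
\end{equation*}
Using the polynomial bounds and L'Hospital / dominated convergence on the integral, the right-hand side has a computable limit as $r\to\infty$; meanwhile $\frac{f_r}{f}=(\log f)_r\sim -\frac{2}{(1-m)r}$ must be extracted. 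The cleanest route, as in \cite{Hs4}, is to introduce $g(r):=r^{2/(1-m)}f(r)$, or directly to work with $v=r^2 f^{1-m}$, derive from \eqref{f-ode} a first-order ODE for $v$ (or for $\log v$) of the form $v_r = \frac{1}{r}\,\Psi(v,r)$ where $\Psi(v,r)\to \Psi_\infty(v)$ as $r\to\infty$, with $\Psi_\infty$ having the unique positive zero $v=\ell$, and then show $v$ is bounded and bounded away from $0$ and monotone for large $r$ so that it must converge to that zero.

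The third step is to upgrade convergence along a sequence to a genuine limit and to pin the limiting value. Here I would show that $v(r)=r^2f(r)^{1-m}$ is eventually monotone: differentiating \eqref{v-defn} and substituting \eqref{f-intregal-eqn30} (or \eqref{f-integral-eqn}) gives $v_r$ in terms of lower-order quantities whose sign stabilizes once the crude bounds are in force, so $\lim_{r\to\infty}v(r)$ exists in $(0,\infty)$; call it $L$. Then passing to the limit in the rescaled ODE for $v$ forces $\Psi_\infty(L)=0$, and since the only positive root is $\ell=\frac{2(n-2-nm)}{(1-m)\rho_1}$ we conclude $L=\ell$, which is exactly \eqref{f-limit-infty}.

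The main obstacle I expect is \textbf{Step 2}: obtaining the right two-sided polynomial decay $f(r)\asymp r^{-2/(1-m)}$ at infinity and, in particular, ruling out both faster decay (which the $\alpha f$ term would penalize) and the spurious $r^{-\alpha/\beta}$ behaviour suggested by the near-origin bound \eqref{f-upper-lower-bds}. The delicate point is that \eqref{f-upper-lower-bds} only gives information that is sharp near $r=0$; at infinity one must instead exploit the sign of $n\beta-\alpha$ (negative here since $\beta>\beta_1$, by \eqref{alpha>n-beta}) in \eqref{f-integral-eqn} to get a useful lower bound on $f$, and combine it with $f_r<0$ and an integral estimate for the upper bound. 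Once the correct decay rate is locked in, the passage to the limit and the identification of the constant are routine manipulations of \eqref{f-integral-eqn}, following \cite{Hs4} essentially verbatim.
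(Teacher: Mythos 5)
The paper itself gives essentially no proof of Proposition \ref{beta>beta1-limit-case-prop}: it is stated immediately after quoting Lemma 2.1 of \cite{Hu3} and introduced only by the sentence ``By \eqref{f-integral-eqn} and the same argument as in \cite{Hs4} we have the following result.'' Your proposal follows the same route (two-sided bounds on $v=r^2f^{1-m}$ extracted from the integral identity \eqref{f-integral-eqn}, then pass to the limit in that identity using l'Hospital), so at the level of strategy you and the paper agree. The actual details in this paper's Section~4 for the companion range $\beta_0'<\beta\le\beta_1$ confirm the structure you sketch: Lemmas~\ref{vr-positive-lem}--\ref{v-upper-bd-lem} establish $0<C_6\le v\le C_7$ for large $r$, and the proof of Theorem~\ref{soln-at-x-infty-thm} then works along an arbitrary subsequence $r_i\to\infty$ and identifies the subsequential limit via l'Hospital; no monotonicity of $v$ is ever claimed or needed, so your Step~3 ``$v$ eventually monotone'' is an unnecessary (and possibly hard-to-justify) addition. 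The more substantive slip is the sign of $n\beta-\alpha$: you assert it is negative for $\beta>\beta_1$, but \eqref{alpha>n-beta} says the opposite. From $\alpha-n\beta=\frac{\rho_1-\beta(n-2-nm)}{1-m}$ and $\beta>\beta_1=\frac{\rho_1}{n-2-nm}$ we get $\alpha<n\beta$, i.e.\ $n\beta-\alpha>0$. This matters because in \eqref{f-integral-eqn} the right-hand side $(n\beta-\alpha)\int_0^r f\rho^{n-1}\,d\rho$ is then \emph{positive}, and it is precisely that positivity (together with $f_r<0$) which yields the differential inequality $m f^{m-2}f_r>-\beta r$ and hence the lower bound $\liminf_{r\to\infty}v(r)\ge \frac{2m}{\beta(1-m)}>0$; your claimed negative sign would actually give an inequality pointing the wrong way. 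The conclusion you want is reachable, but by the opposite sign reasoning. With the sign corrected and the monotonicity dropped in favor of the subsequence-plus-l'Hospital argument (as in the $\beta\le\beta_1$ case in the paper), your outline does match the intended proof.
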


Hence the result of Theorem \ref{soln-at-x-infty-thm} for the case $\beta>\beta_1$ is included in Proposition
\ref{beta>beta1-limit-case-prop}. Thus we will now only need to consider the case $\beta_0'<\beta\le\beta_1$ for the rest of this section. We will first prove that the function $v(r)$ is uniformly bounded above and below on $[1,\infty)$. Then we will write $v$ using an integral representation formula and 
derive the limit behaviour of $v(r)$ as $r\to\infty$ using this integral representation formula of $v(r)$ and the l'Hosiptal rule. We start will a technical lemma.

\begin{lem}\label{vr-positive-lem}
Suppose that $\beta_0'<\beta\le\beta_1$. Let $v$ be given by \eqref{v-defn},
\begin{equation}\label{m1-defn}
M_1=\frac{1}{2(1-m)}\left(\frac{\alpha-n\beta}{m\3_3}+\beta\right)^{-1}
\end{equation}
and
\begin{equation}\label{b-set-defn}
\mathcal{B}=\left\{r>1: v(r)\le M_1\right\}.
\end{equation}
Suppose
\begin{equation}\label{set-b-not-empty-infty}
[R_1,\infty)\cap\mathcal{B}\ne\phi\quad\forall R_1>1.
\end{equation}
Then there exists a constant $R_0>1$ such that
\begin{equation}\label{vr-positive}
v_r(r)>0\quad\forall r\in [R_0,\infty)\cap\mathcal{B}.
\end{equation} 
\end{lem}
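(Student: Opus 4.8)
\textit{Sketch of proof.}
Write $s=\log r$ and let $w$, $z$, $\widetilde{w}$ be as in Theorem~\ref{blow-up-self-similar-soln-thm}; put $\widetilde{v}(s)=v(e^s)$. Since $v=r^2f^{1-m}=r^{\,2-\alpha(1-m)/\beta}w^{1-m}$ and $\alpha(1-m)/\beta=2+\rho_1/\beta$, we get $\widetilde{v}(s)=e^{-\rho_1 s/\beta}\widetilde{w}(s)^{1-m}$, hence by \eqref{z-w-wr-relation}
\[
\frac{rv_r(r)}{v(r)}=(1-m)z(s)-\frac{\rho_1}{\beta},\qquad \widetilde{v}_s=(1-m)\,\widetilde{v}\,(z-z_*),\qquad z_*:=\frac{\rho_1}{\beta(1-m)}.
\]
Thus $v_r(r)>0\iff z(s)>z_*$, and the assertion is equivalent to: $z(s)>z_*$ whenever $s\ge\log R_0$ and $\widetilde{v}(s)\le 1/M_1$. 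Inserting $\beta e^{-\rho_1 s/\beta}\widetilde{w}^{1-m}=\beta\widetilde{v}$ into \eqref{z-eqn2}, $z$ satisfies the scalar ODE $z_s=G(z)-\beta\widetilde{v}\,z$ with $G(z):=C_3-m(z+C_1)^2$; note $G(0)=C_2>0$ because $\beta>m\rho_1/(n-2-nm)$. When $\beta=\beta_1$ one has $\alpha-n\beta=0$, and then \eqref{f-integral-eqn2} shows directly that the term $c(\xi)-(\alpha-n\beta)\int_\xi^r\rho^{n-1}f\,d\rho$ entering $v_r$ has a fixed positive sign, so the substantial case is $\beta_0'<\beta<\beta_1$, which is what I would concentrate on.

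The crucial elementary fact is that $G(z)/z=C_2/z-2mC_1-mz$ has derivative $-m-C_2/z^2<0$ on $(0,\infty)$, i.e.\ $z\mapsto G(z)/z$ is strictly decreasing. A short computation, using $C_3=(n-2)^2/(4m)$, gives $G(z_*)=\beta L\,z_*$ with $L:=\dfrac{2(n-2-nm)}{(1-m)\rho_1}$, so that $G(z)\ge\beta L\,z$ for all $0<z\le z_*$. Hence, at any $s$ with $z(s)\le z_*$,
\[
z_s(s)=G(z(s))-\beta\widetilde{v}(s)\,z(s)\ \ge\ \beta\,z(s)\bigl(L-\widetilde{v}(s)\bigr).
\]
Since $1/M_1<L$ (immediate when $\beta=\beta_1$; for $\beta<\beta_1$ it holds once $\varepsilon_3$ in \eqref{alpha-beta-lower-bd} is taken small, which only enlarges $M_1$ in \eqref{m1-defn}), it follows that $z_s(s)>0$ at every $s$ with $z(s)\le z_*$ and $\widetilde{v}(s)\le 1/M_1$. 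Differentiating $rv_r/v=(1-m)z-\rho_1/\beta$ once more in $r$ (with $dz/dr=z_s/r$) yields
\[
v_{rr}(r)=\frac{(1-m)v(r)}{r^2}\,z_s(s)-\frac{v_r(r)}{r}+\frac{v_r(r)^2}{v(r)},
\]
so at every $r\in\mathcal{B}$ with $v_r(r)\le 0$ (whence $z(s)\le z_*$, $\widetilde{v}(s)\le 1/M_1$, and therefore $z_s(s)>0$) we get $v_{rr}(r)>0$. In other words, for $r$ large $v$ is strictly convex at every point of $\mathcal{B}$ where it is non-increasing; in particular $v$ has no interior local maximum inside $\mathcal{B}$, and every critical point of $v$ lying in $\mathcal{B}$ is a strict local minimum.

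It remains to turn this pointwise convexity into the stated positivity, and this is the one genuinely delicate step. One argues by contradiction: if $v_r(r_0)\le 0$ for some $r_0\in[R_0,\infty)\cap\mathcal{B}$ with $R_0$ large, then on the connected component of $\{r\ge R_0:\ v_r\le 0,\ v\le 1/M_1\}$ through $r_0$ the convexity forces $v_r$ to be strictly increasing, while the lower bound $z_s\ge\beta z_0(L-1/M_1)>0$ there (with $z_0>0$ the value of $z$ at the left endpoint) forces $z$ to reach $z_*$ after a bounded increase of $s$; hence $v$ decreases, stays $\le 1/M_1$, and attains a strict local minimum, after which it increases and — having no interior local maximum inside $\mathcal{B}$ — keeps increasing until it exceeds $1/M_1$. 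Iterating, and using the hypothesis \eqref{set-b-not-empty-infty} that $\mathcal{B}$ is unbounded (which forces $v$ to return below $1/M_1$, each return producing another point of $\mathcal{B}$ with $v_r\le 0$), one controls the successive excursions of $v$ across the level $1/M_1$, equivalently the spiralling of the orbit $(\widetilde{v},z)$ about the rest point $(L,z_*)$, and reaches a contradiction once $R_0$ is chosen large enough. The main obstacle is precisely making this last global bookkeeping rigorous — coupling the local convexity statement $v_{rr}>0$ on $\mathcal{B}$ to the unboundedness hypothesis and the precise size of $M_1$ — whereas all the earlier steps are routine ODE computations.
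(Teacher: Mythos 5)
Your reduction to the planar autonomous system in $(\widetilde{v},z)$ is a genuinely different route from the paper's, which argues by a compactness/subsequence contradiction: from $r_i\to\infty$ in $\mathcal{B}$ with $v_r(r_i)\le 0$, the paper extracts the subsequential limits $a_1=\lim_{i\to\infty} r_i^{n-2}f^m(r_i)\in[0,\infty]$ and $a_2=\lim_{i\to\infty} r_if_r(r_i)/f(r_i)$ and, using the integrated ODE \eqref{rfr-f-eqn}, \eqref{f-intregal-eqn5} and l'H\^opital's rule, rules out each of the three cases $a_1\in(0,\infty)$, $a_1=\infty$, $a_1=0$. Your computations --- the equivalence $v_r>0\Leftrightarrow z>z_*$, the strict monotonicity of $G(z)/z$, the identity $C_3=(n-2)^2/(4m)$, and the evaluation $G(z_*)=\beta L z_*$ that makes the target constant $L$ of \eqref{f-limit-infty} appear as the $\widetilde{v}$--coordinate of the rest point --- are correct, and the pointwise conclusion $v_{rr}>0$ on $\mathcal{B}\cap\{v_r\le 0\}$ (granted $1/M_1<L$) is a nice local observation.

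But there is a genuine gap exactly where you flag one, and it is not mere bookkeeping. The convexity statement shows only that each excursion of the orbit into $\{\widetilde{v}\le 1/M_1\}$ begins with $z\le z_*$, that $z$ then increases strictly until it passes $z_*$, and that the orbit exits with $v$ increasing. This by itself does not preclude the orbit from spiralling about $(L,z_*)$ and re-entering $\{\widetilde{v}\le 1/M_1,\ z\le z_*\}$ infinitely often at arbitrarily large $r$ --- which is exactly the scenario the lemma must exclude. Ruling that out requires a genuinely global device (a Lyapunov functional, a Bendixson-type argument, or an a priori estimate independent of the local phase-plane picture), and your sketch supplies none; the paper sidesteps this by working directly with the integral representation of $f$ and extracting a numerical contradiction from the subsequential limits $a_1,a_2$. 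A secondary point: your argument requires $1/M_1<L$, which you obtain by shrinking $\varepsilon_3$ in \eqref{alpha-beta-lower-bd}; this is acceptable since $\varepsilon_3$ is a free parameter and the downstream use of the lemma is insensitive to the particular value of $M_1$, but it should be stated explicitly, whereas the paper's proof needs no such relation between $M_1$ and $L$.
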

\begin{proof}
Suppose \eqref{vr-positive} does not hold. Then there exists a sequence $\{r_i\}_{i=1}^{\infty}\subset\mathcal{B}$, $r_i\to\infty$ as $i\to\infty$, such that
\begin{equation}\label{vr-neg-sequence}
v_r(r_i)\le 0\quad\forall i\in\mathbb{Z}^+.
\end{equation}
Then there exists a subsequence of $\{r_i\}_{i=1}^{\infty}$ which we may assume without loss of generality to be the sequence itself such that 
\begin{equation*}
a_1:=\lim_{i\to\infty}r_i^{n-2}f^m(r_i)\in [0,\infty]\quad\mbox{ exists. }
\end{equation*}
Note that since $\{r_i\}_{i=1}^{\infty}\subset\mathcal{B}$,
\begin{align}
&r_i^2f(r_i)^{1-m}=v(r_i)\le M_1\quad\forall i\in\Z^+\label{fri-limit-0}\\
\Rightarrow\quad&0<f(r_i)\le M_1r_i^{-\frac{2}{1-m}}\to 0\quad\mbox{ as }i\to\infty\notag
\end{align}
and \eqref{fri-goes-to0} holds. Hence \eqref{f-intregal-eqn5}  holds. 
By \eqref{f-fr-ineqn} of Corollary \ref{f-upper-lower-bds-cor}, Corollary  \ref{fr-negative2-cor} and \eqref{alpha-beta-lower-bd},
\begin{equation}\label{rfr-f-bd}
0>\frac{rf_r(r)}{f(r)}>-\frac{\alpha}{\beta}>-\frac{n-2}{m}+\3_3\quad\forall r>0.
\end{equation}
By \eqref{fri-limit-0} and \eqref{rfr-f-bd} the sequence $\{r_i\}_{i=1}^{\infty}$ has a subsequence  which we may assume without loss of generality to be the sequence itself such that 
\begin{equation}\label{a2-defn}
a_2:=\lim_{i\to\infty}\frac{r_if_r(r_i)}{f(r_i)}\quad\mbox{ exists and } a_2\in \left[-\frac{n-2}{m}+\3_3,0\right]
\end{equation}
and 
\begin{equation}\label{a3-defn}
a_3:=\lim_{i\to\infty}r_i^2f(r_i)^{1-m}\quad\mbox{ exists and } a_3\in [0,M_1].
\end{equation}
We now divide the proof into two cases.

\noindent{\bf Case 1}: $0\le a_1<\infty$.

\noindent 
Note that
\begin{equation}\label{v-limit-infty}
\lim_{i\to\infty}v(r_i)=\lim_{i\to\infty}r_i^2f(r_i)^{1-m}=\lim_{i\to\infty}r_i^{-\frac{n-2-nm}{m}}\cdot\lim_{i\to\infty}(r_i^{n-2}f^m(r_i))^{\frac{1-m}{m}}=0\cdot a_1^{\frac{1-m}{m}}=0.
\end{equation}
By \eqref{rfr-f-eqn},
\begin{equation*}
\lim_{i\to\infty}r_i^{n-2}f^m(r_i)\cdot\lim_{i\to\infty} \frac{r_if_r(r_i)}{f(r_i)}=c(\xi)-\beta\lim_{i\to\infty}r_i^{n-\frac{n-2}{m}}\cdot\lim_{i\to\infty}(r_i^{n-2} f(r_i)^m)^{\frac{1}{m}}-(\alpha-n\beta)\lim_{i\to\infty}\int_{\xi}^{r_i}\rho^{n-1}f(\rho)\,d\rho
\end{equation*}
holds for any $\xi>0$. Hence
\begin{align}
&a_1a_2=c(\xi)-(\alpha-n\beta)\int_{\xi}^{\infty}\rho^{n-1}f(\rho)\,d\rho\quad\forall\xi>0\label{a12-relation}\\
\Rightarrow\quad&0<\int_1^{\infty}\rho f(\rho)\,d\rho<\int_1^{\infty}\rho^{n-1}f(\rho)\,d\rho<\infty.\label{f-integrals-finite}
\end{align}
Then by \eqref{f-intregal-eqn5}, \eqref{f-integrals-finite} and the l'Hospital rule,
\begin{align}\label{f-limit-eqn1}
\frac{a_1}{m}=&\frac{1}{m}\lim_{i\to\infty}r_i^{n-2}f(r_i)^m=-\frac{c(\xi)}{n-2}+\beta\lim_{i\to\infty}\frac{\int_{r_i}^{\infty}\rho f(\rho)\,d\rho}{r_i^{2-n}}+\lim_{i\to\infty}\frac{\int_{r_i}^{\infty}\frac{\alpha-n\beta}{s^{n-1}}\left(\int_{\xi}^s\rho^{n-1}f(\rho)\,d\rho\right)\,ds}{r_i^{2-n}}\notag\\
=&\frac{1}{n-2}\left(-c(\xi)+\beta\lim_{i\to\infty}\frac{r_i f(r_i)}{r_i^{1-n}}+\lim_{i\to\infty}\frac{\frac{\alpha-n\beta}{r_i^{n-1}}\left(\int_{\xi}^{r_i}\rho^{n-1}f(\rho)\,d\rho\right)}{r_i^{1-n}}\right)\notag\\
=&\frac{1}{n-2}\left(-c(\xi)+\beta\lim_{i\to\infty}r_i^{n-\frac{n-2}{m}}\cdot\lim_{i\to\infty}(r_i^{n-2} f(r_i)^m)^{\frac{1}{m}}+(\alpha-n\beta)\int_{\xi}^{\infty}\rho^{n-1}f(\rho)\,d\rho\right)\notag\\
=&\frac{1}{n-2}\left(-c(\xi)+(\alpha-n\beta)\int_{\xi}^{\infty}\rho^{n-1}f(\rho)\,d\rho\right).
\end{align}
We now divide this case into two subcases. 

\noindent{\bf Subcase A}: $0<a_1<\infty$.

\noindent 
By \eqref{a12-relation} and \eqref{f-limit-eqn1},
\begin{equation*}
\frac{a_1}{m}=-\frac{a_1a_2}{n-2}\quad\Rightarrow\quad a_2=-\frac{n-2}{m}
\end{equation*}
which contradicts \eqref{a2-defn}. Hence subcase A does not hold.

\noindent{\bf Subcase B}: $a_1=0$.

\noindent By \eqref{c-xi-defn}, \eqref{v-limit-infty}, \eqref{f-integrals-finite}, \eqref{f-limit-eqn1} and the l'Hospital rule,
\begin{align}\label{rfrf-ratio-limit-zero}
(\alpha-n\beta)\int_{\xi}^{\infty}\rho^{n-1}f(\rho)\,d\rho=&c(\xi)=\xi^{n-1}f(\xi)^{m-1}f_r(\xi)+\beta \xi^nf(\xi)\quad\forall \xi>0\notag\\
\Rightarrow\qquad\qquad\qquad\quad\lim_{i\to\infty}\frac{r_if_r(r_i)}{f(r_i)}=&-\beta\lim_{i\to\infty} r_i^2f(r_i)^{1-m}+(\alpha-n\beta)\lim_{i\to\infty}\frac{\int_{r_i}^{\infty}\rho^{n-1}f(\rho)\,d\rho}{r_i^{n-2}f^m(r_i)}\notag\\
=&(n\beta-\alpha)\lim_{i\to\infty}\frac{r_i^{n-1}f(r_i)}{(n-2)r_i^{n-3}f(r_i)^m+mr_i^{n-2}f(r_i)^{m-1}f_r(r_i)}\notag\\
=&(n\beta-\alpha)\lim_{i\to\infty}\frac{r_i^2f(r_i)^{1-m}}{(n-2)+m(r_if_r(r_i)/f(r_i))}\notag\\
=&\frac{n\beta-\alpha}{n-2+ma_2}\lim_{i\to\infty}r_i^2f(r_i)^{1-m}\notag\\
=&0.
\end{align}
By \eqref{rfrf-ratio-limit-zero} there exists $i_0\in\mathbb{Z}^+$ such that
\begin{equation}\label{rfrf-lower-bd5}
\frac{r_if_r(r_i)}{f(r_i)}>-\frac{1}{1-m}\quad\forall i\ge i_0.
\end{equation}
Since
\begin{equation}\label{vr-expression}
v_r(r)=\frac{v(r)}{r}\left(1+\frac{1-m}{2}\cdot\frac{rf_r(r)}{f(r)}\right)\quad\forall r>0,
\end{equation}
by \eqref{rfrf-lower-bd5},
\begin{equation}\label{vr-positive3}
v_r(r_i)\ge\frac{v(r_i)}{2r_i}>0\quad\forall  i\ge i_0
\end{equation}
which contradicts \eqref{vr-neg-sequence}. Hence subcase B does not hold.

Thus case 1 does not hold.

\noindent{\bf Case 2}: $a_1=\infty$.

\noindent 
By \eqref{c-xi-defn}, \eqref{rfr-f-eqn}, \eqref{a2-defn} and \eqref{a3-defn},
\begin{align}\label{rfr-f-limit-eqn2}
\lim_{i\to\infty}\frac{r_if_r(r_i)}{f(r_i)}=&\lim_{i\to\infty}\frac{f(1)^{m-1}f_r(1)+\beta f(1)}{r_i^{n-2}f^m(r_i)}-\beta \lim_{i\to\infty}r_i^2f^{1-m}(r_i)-(\alpha-n\beta)\lim_{i\to\infty}\frac{\int_1^{r_i}\rho^{n-1}f(\rho)\,d\rho}{r_i^{n-2}f^m(r_i)}\notag\\
=&-\beta a_3-(\alpha-n\beta)\lim_{i\to\infty}\frac{\int_1^{r_i}\rho^{n-1}f(\rho)\,d\rho}{r_i^{n-2}f^m(r_i)}.
\end{align}
Hence if $f\in L^1(\mathbb{R}^n\setminus B_1(0))$, then by \eqref{m1-defn}, \eqref{a3-defn} and \eqref{rfr-f-limit-eqn2},
\begin{equation}\label{rfrf-ratio-limit}
\lim_{i\to\infty}\frac{r_if_r(r_i)}{f(r_i)}=-\beta a_3\ge-\beta M_1\ge-\frac{1}{2(1-m)}.
\end{equation}
If $f\not\in L^1(\mathbb{R}^n\setminus B_1(0))$, then by \eqref{m1-defn}, \eqref{a2-defn}, \eqref{a3-defn}, \eqref{rfr-f-limit-eqn2}  and the l'Hospital rule,
\begin{align}\label{rfrf-ratio-limit2}
\lim_{i\to\infty}\frac{r_if_r(r_i)}{f(r_i)}=&-\beta a_3-(\alpha-n\beta)\lim_{i\to\infty}\frac{r_i^{n-1}f(r_i)}{(n-2)r_i^{n-3}f(r_i)^m+mr_i^{n-2}f(r_i)^{m-1}f_r(r_i)}\notag\\
=&-\beta a_3-(\alpha-n\beta)\lim_{i\to\infty}\frac{r_i^2f(r_i)^{1-m}}{(n-2)+m(r_if_r(r_i)/f(r_i))}\notag\\
=&- \left(\beta+\frac{(\alpha-n\beta)}{m}\cdot\frac{1}{\frac{n-2}{m}+a_2}\right)a_3\notag\\
\ge&- \left(\beta+\frac{(\alpha-n\beta)}{m\3_3}\right)M_1\notag\\
=&-\frac{1}{2(1-m)}.
\end{align}
By \eqref{rfrf-ratio-limit} and \eqref{rfrf-ratio-limit2} there exists $i_0\in\mathbb{Z}^+$ such that
\eqref{rfrf-lower-bd5} holds. Then by \eqref{rfrf-lower-bd5} and \eqref{vr-expression} we get
\eqref{vr-positive3} which contradicts \eqref{vr-neg-sequence}.
Hence case 2 does not hold. 

Hence by case 1 and case 2 no such sequence $\{r_i\}_{i=1}^{\infty}$ exists.
Thus there exists a constant $R_0>0$ such that
\eqref{vr-positive} holds.

\end{proof}

\begin{lem}\label{v-lower-bd-lem}
Suppose that $\beta_0'<\beta\le\beta_1$. Let $v$ be given by \eqref{v-defn}. Then there exists a constant $C_6>0$ such that
\begin{equation}\label{v-lower-bd}
v(r)\ge C_6\quad\forall r\ge 1.
\end{equation} 
\end{lem}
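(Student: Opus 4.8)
The plan is to localize the problem to large $r$. On every bounded interval $[1,R]$ the function $v(r)=r^2f(r)^{1-m}$ is continuous and strictly positive, since $f$ is a positive (indeed smooth) classical solution of \eqref{f-ode} on $(0,\infty)$; hence $v$ attains a positive minimum on $[1,R]$. So it suffices to bound $v$ below by a positive constant on $[R,\infty)$ for a suitable $R>1$.

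First I would split according to whether the hypothesis \eqref{set-b-not-empty-infty} of Lemma \ref{vr-positive-lem} holds. If it fails, there is $R_1>1$ with $[R_1,\infty)\cap\mathcal{B}=\phi$, so by \eqref{b-set-defn} we have $v(r)>1/M_1$ for every $r\ge R_1$, and combining with the positive minimum of $v$ on $[1,R_1]$ yields \eqref{v-lower-bd} at once. Otherwise \eqref{set-b-not-empty-infty} holds and Lemma \ref{vr-positive-lem} provides $R_0>1$ with $v_r(r)>0$ for all $r\in[R_0,\infty)\cap\mathcal{B}$; applying \eqref{set-b-not-empty-infty} with $R_1=R_0$ produces a point $r_0\ge R_0$ with $v(r_0)\le 1/M_1$, so $r_0\in[R_0,\infty)\cap\mathcal{B}$ and $v_r(r_0)>0$.

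The key step is the claim that $v(r)\ge v(r_0)$ for all $r\ge r_0$: once $v$ has reached the level $v(r_0)\le 1/M_1$ it can never again fall strictly below it. I would argue by contradiction. If $v(r_1)<v(r_0)$ for some $r_1>r_0$, set $r^*=\sup\{r\in[r_0,r_1]:v(r)\ge v(r_0)\}$; then $r_0\le r^*<r_1$, continuity forces $v(r^*)=v(r_0)$, and $v(r)<v(r_0)$ for all $r\in(r^*,r_1]$. But $v(r^*)=v(r_0)\le 1/M_1$ and $r^*\ge R_0$, so $r^*\in[R_0,\infty)\cap\mathcal{B}$ and Lemma \ref{vr-positive-lem} gives $v_r(r^*)>0$, contradicting $v(r)<v(r^*)$ for $r$ slightly larger than $r^*$. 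Granting the claim, $v\ge v(r_0)>0$ on $[r_0,\infty)$, and together with the positive minimum of $v$ on the compact interval $[1,r_0]$ we obtain a positive $C_6$ with $v(r)\ge C_6$ for all $r\ge 1$.

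The main obstacle is precisely this monotonicity-propagation claim: Lemma \ref{vr-positive-lem} only asserts $v_r>0$ at points where $v\le 1/M_1$, so in principle $v$ could oscillate, climbing above $1/M_1$ and then plunging back below $v(r_0)$; ruling this out requires the ``last-exit point'' argument above rather than any direct monotonicity of $v$. The remaining ingredients---continuity and positivity of $v$, existence of positive minima on compact intervals, and the dichotomy on \eqref{set-b-not-empty-infty}---are routine.
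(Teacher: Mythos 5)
Your argument is correct, and it follows the same overall strategy as the paper: handle $[1,R]$ by compactness and positivity of $f$, split into two cases according to whether points with $v\le 1/M_1$ persist out to infinity (i.e.\ whether \eqref{set-b-not-empty-infty} holds), and in the persistent case invoke Lemma \ref{vr-positive-lem} to control $v$ from below. The one place you diverge is the topological bookkeeping in the main case. The paper fixes $r_1$ with $v(r_1)<1/M_1$, defines $r_2$ as the supremum of the interval on which $v$ stays strictly between $v(r_1)$ and $1/M_1$, and shows $r_2=\infty$ by a two-step contradiction: if $r_2<\infty$ then $v(r_2)=1/M_1$ with $v_r(r_2)>0$, one invokes the Case-1 hypothesis a second time to find a later point $r_3$ back in $\mathcal{B}$, and the first return point $r_4$ gives $v_r(r_4)\le 0$, contradicting Lemma \ref{vr-positive-lem}. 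Your ``last-exit'' argument replaces this with a single contradiction: if $v$ ever falls strictly below $v(r_0)$ at some $r_1>r_0$, the last time $r^*$ at which $v(r^*)\ge v(r_0)$ satisfies $v(r^*)=v(r_0)\le 1/M_1$, so $r^*\in[R_0,\infty)\cap\mathcal{B}$ and $v_r(r^*)>0$, contradicting that $v<v(r_0)$ immediately to the right of $r^*$. This is tighter: you use the hypothesis \eqref{set-b-not-empty-infty} only once (to produce $r_0$) rather than twice, you avoid tracking whether $v$ crosses the threshold $1/M_1$, and you prove only the inequality $v\ge v(r_0)$ actually needed rather than the stronger (but unneeded) statement that $v$ remains trapped in the band $(v(r_1),1/M_1)$. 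Both proofs rest on the same key tool, Lemma \ref{vr-positive-lem}, so the approaches are the same in substance; yours is a modest streamlining of the continuity argument.
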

\begin{proof}
Let $M_1$ and $\mathcal{B}$ be given by \eqref{m1-defn} and \eqref{b-set-defn}. 
We now divide the proof into two cases.

\noindent{\bf Case 1}: $\{r\ge R_1:v(r)<M_1\}\ne\phi\quad\forall R_1>1$.

\noindent Then by Lemma \ref{vr-positive-lem} there exists a constant $R_0>1$ such that
\eqref{vr-positive} holds. By the hypothesis of case 1 there exists $r_1\in\{r\ge R_0:v(r)<M_1\}$. Then by Lemma \ref{vr-positive-lem},
\begin{equation*}
v_r(r_1)>0\quad\mbox{ and }\quad v(r_1)<M_1.
\end{equation*}
Hence there exists a constant $\delta_1>0$ such that
\begin{equation*}
v(r_1)<v(r)<M_1\quad\forall r_1<r<r_1+\delta_1.
\end{equation*}
Let $r_2=\sup\{r_0>r_1:v(r_1)<v(r)<M_1\quad\forall r_1<r<r_0\}$. Then $r_2>r_1$. Suppose $r_2<\infty$. Then $v(r_2)\le M_1$. If $v(r_2)<M_1$, then by \eqref{vr-positive},
\begin{equation}\label{vr-positive7}
v_r(r)>0\quad\forall r_1<r\le r_2.
\end{equation}
Hence there exists $\delta_2>0$ such that
\begin{equation*}
v_r(r)>0\quad\forall r_1<r\le r_2+\delta_2
\end{equation*}
and
\begin{equation*}
v(r_1)<v(r)<M_1\quad\forall r_1<r<r_2+\delta_2
\end{equation*}
which contradicts the choice of $r_2$. Hence $v(r_2)=M_1$ and by \eqref{vr-positive} $v_r(r_2)>0$. 
Then there exists a constant $\delta_3>0$ such that
\begin{equation*}
v_r(r)>0\quad\forall r_1<r\le r_2+\delta_3\quad\mbox{ and }\quad v(r)>M_1\quad\forall r_2<r<r_2+\delta_3.
\end{equation*}
By the hypothesis in case 1 we can choose $r_3>r_2+\delta_3$ such that $v(r_3)<M_1$. Let $r_4=\sup\{r_0>r_2:v(r)>M_1\quad\forall r_2<r<r_0\}$. Then $r_2+\delta_3\le r_4<r_3$,
\begin{equation*}
v(r)>M_1\quad\forall r_2<r<r_4\quad\mbox{ and }\quad v(r_4)=M_1.
\end{equation*} 
Hence $v_r(r_4)\le 0$ which contradicts \eqref{vr-positive}. Hence $r_2=\infty$. Then \eqref{v-lower-bd} holds with $C_6=\min_{1\le r\le r_1}v(r)$.

\noindent{\bf Case 2}: There exists a constant $R_1>0$ such that $\{r\ge R_1:v(r)<M_1\}=\phi$. 

\noindent Then \eqref{v-lower-bd} holds with $C_6=\min (M_1,\min_{1\le r\le R_1}v(r))$ and the lemma follows.

\end{proof}

\begin{lem}\label{v-upper-bd-lem}
Suppose that $\beta_0'<\beta\le\beta_1$. Let $v$ be given by \eqref{v-defn}. Then there exists a constant $C_7>0$ such that
\begin{equation}\label{v-upper-bd}
v(r)\le C_7\quad\forall r\ge 1.
\end{equation} 
\end{lem}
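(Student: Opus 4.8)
The plan is to run, in the ``large $v$'' direction, the same kind of comparison argument used for Lemma~\ref{vr-positive-lem} and Lemma~\ref{v-lower-bd-lem}; the key simplification is that the lower bound $v\ge C_6$ from Lemma~\ref{v-lower-bd-lem} is now at our disposal. First I would record a one-sided inequality for $v$. Since $\beta_0'<\beta\le\beta_1$ we have $\alpha\ge n\beta$ by \eqref{alpha>n-beta}, so in \eqref{rfr-f-eqn} the integral term has a favourable sign; taking $\xi=1$ there and dividing by $r^{n-2}f^m(r)>0$ gives, for all $r>1$,
\begin{equation*}
\frac{rf_r(r)}{f(r)}=\frac{c(1)}{r^{n-2}f^m(r)}-\beta v(r)-(\alpha-n\beta)\,\frac{\int_1^r\rho^{n-1}f(\rho)\,d\rho}{r^{n-2}f^m(r)}\le\frac{c(1)}{r^{n-2}f^m(r)}-\beta v(r),
\end{equation*}
where $c(1)=f(1)^{m-1}f_r(1)+\beta f(1)$ is a finite constant by \eqref{c-xi-defn}. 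From \eqref{v-defn} one has the elementary identity $r^{n-2}f^m(r)=v(r)^{m/(1-m)}\,r^{(n-2-nm)/(1-m)}$, and since $n-2-nm>0$ and $v(r)\ge C_6>0$ by Lemma~\ref{v-lower-bd-lem}, this forces $r^{n-2}f^m(r)\to\infty$ as $r\to\infty$; hence there is $R_0>1$ with $|c(1)|/(r^{n-2}f^m(r))\le 1$ for all $r\ge R_0$, so that
\begin{equation*}
\frac{rf_r(r)}{f(r)}\le 1-\beta v(r)\qquad\forall r\ge R_0 .
\end{equation*}

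Now I would set $M_2:=\beta^{-1}\!\left(2+\frac{2}{1-m}\right)$. If $r\ge R_0$ and $v(r)\ge M_2$, then $rf_r(r)/f(r)\le 1-\beta M_2=-1-\frac{2}{1-m}$, so by \eqref{vr-v-relation}
\begin{equation*}
v_r(r)=\frac{v(r)}{r}\left(1+\frac{1-m}{2}\cdot\frac{rf_r(r)}{f(r)}\right)\le-\frac{(1-m)\,v(r)}{2r}<0 .
\end{equation*}
This is the upper-bound analogue of \eqref{vr-positive}. A continuation (barrier) argument, parallel to Case~1 of Lemma~\ref{v-lower-bd-lem}, then finishes the proof: set $C_7:=\max\!\left(M_2,\ \max_{1\le r\le R_0}v(r)\right)$, which is finite since $f$ is continuous and positive on $[1,R_0]$. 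If $v(r_1)>C_7$ for some $r_1\ge 1$, then necessarily $r_1>R_0$ and $v(r_1)>M_2$. Put $r_3:=\sup\{r\in[R_0,r_1]:v(r)\le M_2\}$. If this set is empty, then $v>M_2$ on $[R_0,r_1]$, hence $v_r<0$ there, so $v(r_1)<v(R_0)\le C_7$, a contradiction. If it is nonempty, then $r_3<r_1$ (otherwise $v(r_1)\le M_2$ by continuity), $v(r_3)=M_2$, and $v>M_2$ on $(r_3,r_1]$, hence $v_r<0$ there, so $v(r_1)<v(r_3)=M_2\le C_7$, again a contradiction. Therefore $v(r)\le C_7$ for all $r\ge 1$.

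The one step deserving care is the assertion $r^{n-2}f^m(r)\to\infty$: it is exactly the issue that forced the three-case split in Lemma~\ref{vr-positive-lem} (where the relevant limit $a_1=\lim r_i^{n-2}f^m(r_i)$ could be $0$, finite and positive, or $\infty$), but here it follows immediately from $v\ge C_6$; with that fact in hand the remainder is routine comparison/continuation bookkeeping, so I do not expect a genuine obstacle.
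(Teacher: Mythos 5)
Your proof is correct, but it follows a genuinely different route from the paper's. The paper derives a pointwise differential inequality for $f$ and integrates it: starting from \eqref{f-intregal-eqn} with $\xi=1$, using $f_r<0$ from Corollary~\ref{fr-negative2-cor} and the lower bound $v\ge C_6$ from Lemma~\ref{v-lower-bd-lem}, it obtains $f(r)^{m-2}f_r(r)+\frac{\alpha}{2n}r\le0$ for $r\ge R_1$, integrates over $(R_1,r)$, and reads off an explicit bound $r^2f^{1-m}(r)\le\frac{16n}{3\alpha(1-m)}$ for $r\ge 2R_1$. You instead run a barrier argument parallel to the one the paper used for the lower bound: you show from \eqref{rfr-f-eqn} and $\alpha-n\beta\ge0$ that $\frac{rf_r}{f}\le\frac{c(1)}{r^{n-2}f^m(r)}-\beta v(r)$, use $v\ge C_6$ to conclude $r^{n-2}f^m(r)\to\infty$ so the first term is eventually $\le1$, and then from \eqref{vr-v-relation} deduce $v_r<0$ whenever $v\ge M_2$ and $r\ge R_0$; a sup-based continuation argument finishes. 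Both proofs lean on $v\ge C_6$, but yours does not need the monotonicity $f_r<0$, is more unified in spirit with Lemmas~\ref{vr-positive-lem}--\ref{v-lower-bd-lem}, and avoids the explicit integration at the cost of being less quantitative (the paper's route produces a concrete numerical bound). One minor remark: the coefficient in \eqref{vr-v-relation} as printed should be $\frac{v(r)}{r}\bigl(2+(1-m)\frac{rf_r}{f}\bigr)$ rather than with the factor $\tfrac12$ inside, but this does not affect the sign argument in either proof. Your argument is sound.
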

\begin{proof}
Let $a_0=n-\frac{2}{1-m}>0$ and let $C_6>0$ be as given by Lemma \ref{v-lower-bd-lem}. 
By Corollary \ref{fr-negative2-cor}, \eqref{fr-negative2} holds. Hence by 
 \eqref{fr-negative2}, \eqref{alpha>n-beta}, \eqref{f-intregal-eqn30}, \eqref{c-xi-defn} and Lemma \ref{v-lower-bd-lem},
\begin{align}\label{f-intregal-eqn2}
&f(r)^{m-1}f_r(r)\le C(1)r^{1-n}-\beta rf(r)-\frac{\alpha-n\beta}{r^{n-1}}\int_1^r\rho^{n-1}f(r)\,d\rho=C(1)r^{1-n}-\frac{\alpha}{n}rf(r)\quad\forall r\ge 1\notag\\
\Rightarrow\quad&f(r)^{m-2}f_r(r)+\frac{\alpha}{n}r\le \frac{C(1)r^{1-a_0}}{(r^2f^{1-m}(r))^{\frac{1}{1-m}}}\le C_8r^{1-a_0}\quad\forall r\ge 1.
\end{align}
where $C_8=C(1)/C_6^{\frac{1}{1-m}}$. We now choose $R_1>\max\left(1,(2nC_8/\alpha)^{1/a_0}\right)$. Then
\begin{equation}\label{ineqn10}
C_8r^{1-a_0}\le\frac{\alpha}{2n}R_1^{a_0}r^{1-a_0}\le\frac{\alpha}{2n}r\quad\forall r\ge R_1.
\end{equation}
By \eqref{f-intregal-eqn2} and \eqref{ineqn10},
\begin{equation}\label{f-intregal-eqn3}
f(r)^{m-2}f_r(r)+\frac{\alpha}{2n}r\le 0\quad\forall r\ge R_1.
\end{equation}
Integrating \eqref{f-intregal-eqn3} over $(R_1,r)$, $r\ge 2R_1$,
\begin{align*}
&f^{m-1}(R_1)+\frac{\alpha (1-m)}{4n}r^2\le f^{m-1}(r)+\frac{\alpha (1-m)}{4n}R_1^2\le f^{m-1}(r)+\frac{\alpha (1-m)}{16n}r^2\quad\forall r\ge 2R_1\notag\\
\Rightarrow\quad&f^{m-1}(R_1)+\frac{3\alpha (1-m)}{16n}r^2\le f^{m-1}(r)\qquad\qquad\qquad\forall r\ge 2R_1\notag\\
\Rightarrow\quad&r^2f^{1-m}(r)\le\frac{r^2}{f^{m-1}(R_1)+\frac{3\alpha (1-m)}{16n}r^2}\le\frac{16n}{3\alpha (1-m)}\quad\,\forall r\ge 2R_1.
\end{align*}
Then \eqref{v-upper-bd} holds with 
\begin{equation*}
C_7=\max\left(\frac{16n}{3\alpha (1-m)},\max_{1\le r\le 2R_1}v(r)\right).
\end{equation*}

\noindent{\ni{\it Proof of Theorem \ref{soln-at-x-infty-thm}:}}
The result for the case $\beta>\beta_1$ is given by Proposition \ref{beta>beta1-limit-case-prop}.
Hence we can now assume that $\beta_0'<\beta\le\beta_1$. By  Lemma \ref{v-lower-bd-lem} and Lemma \ref{v-upper-bd-lem} there exist constants $C_6>0$, $C_7>0$, such that \eqref{v-lower-bd} and \eqref{v-upper-bd} hold. Let $\{r_i\}_{i=1}^{\infty}\subset (1,\infty)$ be a sequence such that
$r_i\to\infty$ as $i\to\infty$. Then by \eqref{v-lower-bd} and \eqref{v-upper-bd}  the sequence $\{r_i\}_{i=1}^{\infty}$ has a subsequence which we may assume without loss of generality to be the sequence itself such that
\begin{equation}\label{a3-defn2}
a_3:=\lim_{i\to\infty} v(r_i)\quad\mbox{ exists  and }a_3\in [C_6,C_7].
\end{equation}
Then since \eqref{a3-defn2} implies that \eqref{fri-goes-to0} holds, we get \eqref{f-intregal-eqn5}.
Note that by \eqref{v-lower-bd}, $v\not\in L^1(\mathbb{R}^n\setminus B_1(0))$. By 
\eqref{v-upper-bd},
\begin{equation}\label{f-weight-norm-finite}
\int_1^{\infty}\rho f(\rho)\,d\rho<\infty\quad\mbox{ and }\quad
\int_1^{\infty}\frac{1}{s^{n-1}}\left(\int_1^s\rho^{n-1}f(\rho)\,d\rho\right)\,ds<\infty
\end{equation}
and by \eqref{v-lower-bd},
\begin{equation}\label{f-weight-norm-infinity}
\int_1^{\infty}\rho^{n-1}f(\rho)\,d\rho=\infty.
\end{equation}
By \eqref{c-xi-defn}, \eqref{f-intregal-eqn5}, \eqref{a3-defn2}, \eqref{f-weight-norm-finite}, \eqref{f-weight-norm-infinity}, l'Hospital's rule and an argument similar to the proof of Lemma 2.1 of \cite{Hs4},
\begin{align*}
\frac{a_3^{\frac{m}{1-m}}}{m}=&\frac{1}{m}\lim_{i\to\infty}(r_i^2f^{1-m}(r_i))^{\frac{m}{1-m}}\notag\\
=&-\frac{f(1)^{m-1}f_r(1)+\beta f(1)}{(n-2)\lim_{i\to\infty}r_i^{n-\frac{2}{1-m}}}+\beta\lim_{i\to\infty}\frac{\int_{r_i}^{\infty}\rho f(\rho)\,d\rho}{r_i^{-\frac{2m}{1-m}}}+\lim_{i\to\infty}\frac{\int_{r_i}^{\infty}\frac{\alpha-n\beta}{s^{n-1}}\left(\int_1^s\rho^{n-1}f(\rho)\,d\rho\right)\,ds}{r_i^{-\frac{2m}{1-m}}}\notag\\
=&\frac{(1-m)}{2m}\left(\beta\lim_{i\to\infty}\frac{r_i f(r_i)}{r_i^{-\frac{2m}{1-m}-1}}+(\alpha-n\beta)\lim_{i\to\infty}\frac{\frac{1}{r_i^{n-1}}\left(\int_1^{r_i}\rho^{n-1}f(\rho)\,d\rho\right)\,ds}{r_i^{-\frac{2m}{1-m}-1}}\right)\notag\\
=&\frac{(1-m)}{2m}\left(\beta a_3^{\frac{1}{1-m}}+(\alpha-n\beta)\lim_{i\to\infty}\frac{\int_1^{r_i}\rho^{n-1}f(\rho)\,d\rho}{r_i^{n-\frac{2}{1-m}}}\right)\notag\\
=&\frac{(1-m)}{2m}\left(\beta a_3^{\frac{1}{1-m}}+\frac{\alpha-n\beta}{n-\frac{2}{1-m}}\lim_{i\to\infty}\frac{r_i^{n-1}f(r_i)}{r_i^{n-\frac{2}{1-m}-1}}\right)\notag\\
=&\frac{(1-m)}{2m}\left(\beta a_3^{\frac{1}{1-m}}+\frac{\alpha-n\beta}{n-\frac{2}{1-m}}a_3^{\frac{1}{1-m}}\right)\notag\\
=&\frac{(1-m)\rho_1}{2m(n-2-nm)}a_3^{\frac{1}{1-m}}.
\end{align*}
Hence 
\begin{equation*}
a_3=\frac{2(n-2-nm)}{(1-m)\rho_1}.
\end{equation*}
Since the sequence $\{r_i\}_{i=1}^{\infty}$ is arbitrary, \eqref{f-limit-infty} hold and the theorem follows.

\end{proof}

\end{document}